\documentclass[11pt,reqno]{amsart}
\usepackage{amsthm,amssymb,amsmath}
\usepackage{xcolor}
\usepackage{fix-cm}
\usepackage{enumitem}
\usepackage[T5,T1]{fontenc}
\usepackage{mlmodern}
\usepackage{ulem}
\usepackage[colorlinks,citecolor=red,hypertexnames=false]{hyperref}
\usepackage{subcaption} 
\usepackage{float}
\usepackage{graphicx}





%

\textwidth 156mm
\oddsidemargin 6mm
\evensidemargin 6mm
\textheight 212mm
\topmargin 5mm

\newtheorem{theorem}{Theorem}
\newtheorem{proposition}{Proposition}
\newtheorem*{theorem*}{Theorem}
\newtheorem{lemma}{Lemma}

\theoremstyle{definition}

\newtheorem*{definition*}{\bf Definition}

\newtheorem{remark}{\sc Remark}[section]
\newtheorem*{remark*}{\sc Remark}
\newtheorem*{remarks}{\sc Remarks}

\newtheorem*{example*}{\bf Example}
\newtheorem*{theoremA}{Theorem A}

\newcommand{\loc}{{\rm loc}}

\newcommand{\Real}{{\rm Re}\,}

%
%



\numberwithin{equation}{section}

\include{diagxy}

\begin{document}

\title[Heat kernel of Keller-Segel finite particles]{Upper bound on heat kernels of finite particle systems of Keller-Segel type}

\author{S.E.\,Boutiah and D.\,Kinzebulatov}

\begin{abstract}
We obtain upper bound on the heat kernel of Keller-Segel type finite particle system that exhibit blow up effects. The proof exploits a connection with certain non-local operators. 
\end{abstract}

\address{Universit\'{e} Laval, D\'{e}partement de math\'{e}matiques et de statistique, Qu\'{e}bec, QC, Canada and Laboratoire de Math\'{e}matiques Appliqu\'{e}es, Universit\'{e} Ferhat Abbas, S\'{e}tif 1, Campus El Bez,  S\'{e}tif, Algeria}
\email{sallah-eddine.boutiah.1@ulaval.ca}

\address{Universit\'{e} Laval, D\'{e}partement de math\'{e}matiques et de statistique, Qu\'{e}bec, QC, Canada}
\email{damir.kinzebulatov@mat.ulaval.ca}

\thanks{The research of D.K. is supported by  NSERC grant (RGPIN-2024-04236)}

\keywords{Heat kernel bounds, Keller-Segel model, particle systems, desingularization}

\fontsize{10.4pt}{4.4mm}\selectfont

\maketitle

\setcounter{tocdepth}{1}

\tableofcontents

\section{Introduction}\label{main_result}
The subject of this paper are interacting particle systems that are closely related or modelled after the following system:
\begin{equation}
\label{ks}
       dX_t^i= -\frac{\nu}{N}\sum_{j=1, j \neq i}^N \frac{X_t^i-X_t^j}{|X_t^i-X_t^j|^2}dt + \sqrt{2}dB_t^i. 
\end{equation}
Here $X_t^i$ is the position of the $i$-th particle  in $\mathbb R^2$ at time $t$, $1 \leq i \leq N$, and $\{B_t^i\}_{t \geq 0}$ are independent 2-dimensional Brownian motions. This system is a finite particle approximation of the famous Keller-Segel model of chemotaxis \eqref{ks_chem}. The constant $\nu \geq 0$ measures the strength of attraction between the particles. In the absence of noise, \eqref{ks} is a system of ordinary differential equations, and so whenever $\nu>0$ the particles collide  and stay glued up due to the strong attraction between them, i.e.\,there is a blow up. Introducing Brownian noise (or thermal excitation) moves the blow-up threshold:
$$ 
\nu_\star=0 \quad \longrightarrow \quad \nu_\star=4.
$$
i.e.\,now for every $\nu <\nu_\star:=4$ the evolution of the particles continues indefinitely provided that $N$ is large and the initial distribution has no atoms. More precisely, under these assumptions the particle system \eqref{ks} has a global in time weak solution in the sense of stochastic differential equations (SDEs). If, on the other hand, $\nu \geq \nu_\star$, then all the particles collide a.s.\,and stay glued up; this can be seen upon noting that $R_t:=\frac{1}{4N}\sum_{i,j=1}^N|X_t^i-X_t^j|^2$ is a local squared Bessel process of dimension $(N-1)(2-\frac{\nu}{2})$. See \cite{F} for detailed discussion and references (we comment on the existing literature further below).

Our goal in this paper is to obtain an upper bound on the density of the law of \eqref{ks} ($=:$\,heat kernel).
The singularities of the drift in \eqref{ks} make invalid any Gaussian heat kernel upper bound. Along the way we will need to establish some regularity results for solutions of the Kolmogorov backward and forward equations behind \eqref{ks} that are interesting on their own, given that the standard regularity theory does not apply to these equations.

The Keller-Segel finite system \eqref{ks} exhibits critical behaviour in two important ways. First, there are blow-ups. The blow-ups, however, also occur in the higher-dimensional counterpart of \eqref{ks} that we consider in Section \ref{high_dim_sect}. The two-dimensionality of \eqref{ks}  is the other reason that makes it difficult to handle. Namely, the drift in the Kolmogorov backward operator corresponding to \eqref{ks} 
$$
L=-\Delta + \frac{\nu}{N}\sum_{i=1}^N \sum_{j=1, j \neq i}^N \frac{x^i-x^j}{|x^i-x^j|^2} \cdot \nabla_{x_i}
$$
is not in $L^2_{\loc}=L^2_{\loc}(\mathbb R^{2N})$, the Cauchy problem for the corresponding parabolic equation is not well-posed in the standard Hilbert triple of Sobolev spaces $W^{1,2}(\mathbb R^{2N}) \rightarrow L^2(\mathbb R^{2N})  \rightarrow W^{-1,2}(\mathbb R^{2N}) $, and the use of De Giorgi's or Moser's methods (including Moser's iterations run in the setting of Dirichlet forms) is problematic. All these difficulties, however, do not appear in the higher-dimensional counterparts of \eqref{ks}.

The source of these analytic difficulties is, one can argue, the lack of the Hardy inequality in $\mathbb R^2$. Let $\langle \, \rangle$ denote the integration over $\mathbb R^{d}$, $d \geq 2$. If $d \geq 3$, then the (usual) Hardy inequality 
\begin{equation}
\label{hardy0}
\frac{(d-2)^2}{4}\big\langle \frac{f^2}{|x|^2}\big\rangle \leq \big\langle |\nabla f|^2 \big\rangle, \quad f \in W^{1,2}(\mathbb R^{d}),
\end{equation}
allows us to control the drift term in $L$ in terms of the quadratic form of the Laplacian, and thus allows to prove the  energy inequality, which is the point of departure for De Giorgi's and Moser's methods. There is no non-trivial Hardy inequality in dimension $d=2$. There is, however, a non-trivial \textit{fractional} Hardy inequality in $\mathbb R^2$:
\begin{equation}
\label{frac1}
\biggl(\frac{1}{2} \frac{\Gamma\left( \frac{1}{4}\right)^{2}}{\Gamma\left( \frac{3}{4}\right)^{2}}\biggr)^{-1} \big\langle\frac{f^2}{|x|}\big\rangle \leq \big\langle |(-\Delta)^{\frac{1}{4}} f|^2 \big\rangle, \quad f \in \mathcal W^{\frac{1}{2},2}(\mathbb R^{2}).
\end{equation}
We will exploit that in order to estimate the heat kernel $p(t,x,y)$ of \eqref{ks} (or, rather, of a very closely related system), even though a priori there is nothing non-local about  the Keller-Segel system \eqref{ks}. 
Furthermore, \eqref{frac1} will provide us with the weak well-posedness of Cauchy problem for the parabolic equation corresponding to \eqref{ks}  in the shifted Hilbert triple of Bessel potential spaces $\mathcal W^{-\frac{1}{2},2}(\mathbb R^{2N}) \rightarrow \mathcal W^{\frac{1}{2},2}(\mathbb R^{2N})  \rightarrow \mathcal W^{\frac{3}{2},2}(\mathbb R^{2N})$.

Our main instrument in dimension $d=2$ is an abstract desingularization theorem (Theorem A) obtained earlier in \cite{KSS} for different purposes. We are going to use some ideas of Nash \cite{N}.
We are also going to use some old ideas of Sem\"{e}nov \cite{S} (Step 3 in the proof of Theorem \ref{thm1}).

The upper heat kernel bound for \eqref{ks} that we will obtain (Theorem \ref{thm1}) has form
$$
p(t,x,y) \leq Ct^{-N}\varphi(y)
$$
for  weight $\varphi$ that explodes at appropriate rate along ``collision hyperplanes'' $x^i=x^j$. In dimensions $d \geq 3$ we will improve this upper bound: replace $t^{-N}$ with Gaussian factor and make weight $\varphi$ time dependent in order to recover the delta-function at time $t=0$ (Theorem \ref{thm2}). See also our comment regarding the lower heat kernel bound after Theorem \ref{thm2}.
That said, in dimension $d=2$, if we only use \eqref{frac1}, then we arrive at the upper heat kernel bound valid under the condition  $\nu<\frac{C}{N}$, i.e.\,the assumption on $\nu$ degenerates quickly as the number of particles $N$ goes to infinity. In fact, \eqref{frac1} underexploits the regularity of the interaction kernel in \eqref{ks}, i.e.\,we can actually apply the fractional Hardy inequality
\begin{equation}
\label{frac2}
\biggl(\frac{1}{2^\alpha} \frac{\Gamma\left( \frac{1}{2} -\frac{\alpha}{4} \right)^{2}}{\Gamma\left( \frac{1}{2} + \frac{\alpha}{4} \right)^{2}}\biggr)^{-1} \big\langle\frac{f^2}{|x|^\alpha}\big\rangle \leq \big\langle |(-\Delta)^{\frac{\alpha}{4}} f|^2 \big\rangle,
\end{equation}
provided that $1 \leq \alpha<2$. It turns out that, with the proper choice of $\alpha$, we can improve (slow down) quite substantially the rate of degeneration of the condition on $\nu$. For example, if we use \eqref{frac2} with appropriately chose $\alpha$, then the maximal admissible $\nu$ for $N=10^9$ is only two times smaller than the maximal admissible $\nu$ for $N=10^3$, see Figure \ref{fig1}. The same argument, applied to the higher dimensional analogue \eqref{ks2} of \eqref{ks} in $\mathbb R^{dN}$ for $d \geq 3$, produces a constraint on $\nu$ that is essentially independent of $N$ (Theorem \ref{thm3}, see also Theorem \ref{thm2} -- its proof uses a different approach). It would be interesting to understand how our upper heat kernel bound behaves under the mean field limit $N \rightarrow \infty$, see \cite{BJW,FJ} and references therein, at least for $d \geq 3$. This would need to take into account the fact that if at time $t=0$ the particles are, say, i.i.d.\,with sufficiently regular density, then the nonlinear drift in the McKean-Vlasov equation is regular enough so that the mean field density satisfies two-sided Gaussian bounds integrated against the initial distribution; informally, there is no more singular weight of type $\varphi$ in the mean field limit. We plan to address this in a subsequent paper.

\subsection{Outline}

\begin{enumerate}

\item[--] In Section \ref{main_sect} we state Theorem \ref{thm1} for the Keller-Segel finite particles \eqref{ks}, first in a priori form, i.e.\,for regularized interaction kernel. We also describe there the abstract desingularization theorem that allows to prove upper heat kernel bounds of the type discussed in Theorem \ref{thm1}. 

\item[--] In Section \ref{high_dim_sect} we consider a higher-dimensional counterpart of \eqref{ks} and refine the upper bound rather substantially (Theorem \ref{thm2}). We also comment there on the sharpness of our upper bound.

\item[--] In Appendix \ref{alt_app} we describe an alternative approach for proving an upper heat kernel bound for \eqref{ks} in dimensions $d \geq 3$, in Appendix \ref{C_app} we discuss adding a divergence-free drift. Appendix \ref{two_app} specifies Theorems \ref{thm1} and \ref{thm2} to the two particle case.

\end{enumerate}

\subsection{Literature} Regarding heat kernels of singular particle systems, we refer to the results of Graczyk-Sawyer \cite{GS1,GS2} and Dziuba\'{n}ski-Hejna \cite{DH} on Dunkl Laplacian. Their situation is different, i.e.\,the focus is on the repulsing interactions, moreover, one has explicit, albeit an non-elementary formula for the heat kernel. We also mention the work of Giunti-Gu-Mourrat \cite{GGM} on the upper heat kernel bound for the symmetric simple exclusion process, although this particle system is quite different from ours. 

There is quite rich literature on heat kernel bounds for local and non-local operators with coefficients having critical polar singularities that make the standard heat kernel bounds invalid, see Milman-Sem\"{e}nov \cite{MS}, Metafune-Sobajima-Spina \cite{MSS}, Kinzebulatov-Sem\"{e}nov-Szczypkowski \cite{KSS}; this list is far from being exhaustive. Generally speaking, non-standard heat kernel bounds appear in many other settings, see e.g.\,Boutiah-Rhandi-Tacelli \cite{BRT} and references therein.

Regarding the weak well-posedness of the Keller-Segel finite particle system \eqref{ks}, we refer to Cattiaux-P\'ed\`eches \cite{CP} and Fournier-Jourdain \cite{FJ} who proved detailed and many ways optimal or close to optimal results, see also recent advances in Fournier-Tardy \cite{FT} and Tardy \cite{T}. The point of departure for Cattiaux-P\'{e}d\`{e}ches \cite{CP} is the setting of Dirichlet forms with test functions having support outside of a measure zero ``pairwise collisions'' set in $\mathbb R^{2N}$, to address, in particular, the lack of higher integrability of the drift in \eqref{ks}, see discussion above. They solve the martingale problem with such test functions, and construct the heat kernel for \eqref{ks}, among many other results. The argument of Fournier-Jourdain \cite{FJ} appeals directly to the corresponding SDEs. It exploits in an essential manner the special form of the interaction kernel in \eqref{ks}. The analysis of collisions due to \cite{FJ} allows \cite{CP} to further solve the classical martingale problem, i.e.\,without cutting out the singular locus of the drift. 
See also recent developments in Cattiaux \cite{Ca2} where, among many results, the author develops an approach to taming the singularities of the Keller-Segel system based on Orlicz spaces.

Regarding well-posedness of particle systems with other singular interaction kernels (e.g.\,of Bessel process type), see Graczyk-Malecki \cite{GM} and Hufnagel-Andraus \cite{HA} and references therein. 
 
We also mention that the theory of SDEs with general singular drifts that can, in particular, introduce strong attraction to submanifolds, was brought up to the task only recently, i.e.\,one can now handle the Keller-Segel finite particle system \eqref{ks} as a special case of what is now known about general SDEs \cite{Ki_Morrey}, albeit with losses in the assumptions on $\nu$ compared to \cite{CP,FJ,FT,T} whose methods are tailored to \eqref{ks}. But, on the other hand, there is no loss in higher dimensions $d \geq 3$ \cite{Ki_multi,KV}. One now has flexible means for modifying e.g.\,the interaction kernel in SDE \eqref{ks}.

There is a rich literature on the Keller-Segel model of chemotaxis, described by a distribution-dependent SDE
\begin{equation}
\label{ks_chem}
dY_t=(K \star \eta)(Y_t) dt+\sqrt{2} dB_t \quad \text{ in } \mathbb R^2,
\end{equation}
where $K(y)=\nu |y|^{-2}y$, $y \in \mathbb R^2$, is the interaction kernel in \eqref{ks}, $\eta(t,y)$ is the law of $Y_t$ and $\star$ is the convolution in the spatial variables, see, in particular, \cite{Ca2, CPZ} and references therein. If $(X_t^{1,N},\dots,X_t^{N,N})$ denotes the solution of \eqref{ks} then, under the exchangeability hypothesis on the initial condition, the process $Y_t$ is obtained as the limit of (sub-)sequence $X_t^{1,N}$ as $N \rightarrow \infty$, see \cite{FJ}, i.e.\,$Y_t$ describes the behaviour of a typical particle in the limit as the number of particles goes to infinity. Regarding the propagation of chaos in general critical settings, see Bresch-Jabin-Wang \cite{BJW}, Jabin-Wang \cite{JW} and Hao-R\"{o}ckner-Zhang \cite{HRZ}, see also references therein. 

Assuming that some mild regularity conditions are imposed on the density of $Y_0$, the regularizing effect of the convolution in \eqref{ks_chem} makes the drift in \eqref{ks_chem} more regular than the drift in the finite particle system \eqref{ks_chem} and thus opens up a way for the use of other methods such as De Giorgi method \cite{JL}; in this setting, the problem with the lack of the Hardy inequality in dimension $2$ does not arise.

\subsection{Notations} Given a sequence $\{T_n\}$ of bounded linear operators $X \rightarrow Y$ between Banach spaces $X$, $Y$, endowed with the operator norm $\|\cdot\|_{X \rightarrow Y}$, we write $$T=s\mbox{-} Y \mbox{-}\lim_n T_n$$ if $$\lim_n\|Tf- T_nf\|_Y=0 \quad \text{ for every $f \in X$}.
$$ 

Let $L^p=L^p(\mathbb R^{dN},dx)$, $W^{1,p}=W^{1,p}(\mathbb R^{dN},dx)$ denote the usual Lebesgue and Sobolev spaces, respectively. 

Set $\|\cdot\|_p:=\|\cdot\|_{L^p}$
and denote operator norm
$
\|\cdot\|_{p \rightarrow q}:=\|\cdot\|_{L^p \rightarrow L^q}
$.

Given $1 < p < \infty$, we set $p':=\frac{p}{p-1}$.

We denote by $\mathcal S$ the Schwartz space, and by $\mathcal S'$ the space of tempered distributions on $\mathbb R^d$.

Let $\mathcal W^{\alpha,p}$ ($\alpha>0$) denote the Bessel potential space endowed with norm $\|u\|_{p,\alpha}:=\|g\|_p$,  
$u=(1-\Delta)^{-\frac{\alpha}{2}}g$, $g \in L^p$, and $\mathcal W^{-\alpha,p'}$, $p'=p/(p-1)$, the anti-dual of $\mathcal W^{\alpha,p}$.

Put
$$
\langle f,g\rangle = \langle f g\rangle :=\int_{\mathbb R^{dN}}f g dx$$ 
(all functions considered in this paper are real-valued). For vector fields $b$, $\mathsf{f}$, we put
$$
\langle b,\mathsf{f}\rangle:=\langle b \cdot \mathsf{f}\rangle \qquad \text{($\cdot$ is the scalar product)}.
$$

\bigskip

\section{Two-dimensional particles}
\label{main_sect}

The Keller-Segel system \eqref{ks} can be written as an SDE in $\mathbb R^{2N}$: 
$$
dX_t=\frac{\nabla \psi(X_t)}{\psi(X_t)}dt + \sqrt{2}dB_t, \quad \quad X_t=(X_t^1,\dots,X_t^N),
$$
where $B_t=(B_t^1,\dots,B_t^N)$ is a Brownian motion in $\mathbb R^{2N}$,
and
$$
\psi(x):=\prod_{1 \leq i<j \leq N}|x^i-x^j|^{-\frac{\nu}{N}}
$$
is a Lyapunov function for \eqref{ks}, i.e.\,we have, at least at the level of formal calculations, $L^\ast \psi=0$ for $L^\ast$ the Kolmogorov forward operator for \eqref{ks}:
$$
L^\ast  =-\Delta - \frac{\nu}{N}\sum_{i=1}^N \sum_{j=1, j \neq i}^N \nabla_{x_i} \cdot \frac{x^i-x^j}{|x^i-x^j|^2}.
$$
The former is seen right away once one rewrites operator $L$ in the form
$$
L=-\Delta - \frac{\nabla \psi}{\psi} \cdot \nabla \quad \text{ on } \mathbb R^{2N}.
$$
Let us note that $\psi$ is locally in $L^1$ if and only if $\nu<\nu_\star=4$.

In this paper we will consider a particle system that has the same singular behaviour around the collision hyperplanes $x^i=x^j$ as \eqref{ks},  but that will make our calculations somewhat simpler. This system corresponds to the Lyapunov function
$$
\varphi(x):=\psi(x)+1,
$$
i.e.\,we replace drift $\frac{\nabla \psi}{\psi}$ with $\frac{\nabla \varphi}{\varphi}$ and consider from now on the following modified Keller-Segel type SDE 
\begin{equation}
\label{ks2}
dX_t=\frac{\nabla \varphi(X_t)}{\varphi(X_t)}dt + \sqrt{2}dB_t, 
\end{equation}
and the backward Kolmogorov operator
$$
\Lambda:=-\Delta - \frac{\nabla \varphi}{\varphi} \cdot \nabla \quad \text{ in } \mathbb R^{2N}.
$$
We will use the following regularization of $\varphi$ and $\Lambda$:
\begin{equation}
\label{varphi_eps}
\varphi_\varepsilon(x):= \prod_{1 \leq i<j \leq N}|x^i-x^j|_\varepsilon^{-\frac{\nu}{N}}+1, 
\end{equation}
where 
\begin{equation}
\label{eps_reg}
|x^i-x^j|_\varepsilon:=\sqrt{|x^i-x^j|^2+\varepsilon}, \quad \varepsilon>0,
\end{equation}
 and
$$
\Lambda_\varepsilon  :=-\Delta - \frac{\nabla \varphi_\varepsilon}{\varphi_\varepsilon} \cdot \nabla.
$$
The latter has, for every $\varepsilon>0$, bounded smooth drift, so by the classical theory, $e^{-t\Lambda_\varepsilon}$ is a strongly continuous semigroup of integral operators in $L^r$ for every $1 \leq r<\infty$. We denote their integral kernel by $p_\varepsilon(t,x,y)$, a smooth function for every $\varepsilon>0$. One has
$$
\mathbb E_{X_0^\varepsilon=x}[f(X_t^\varepsilon)]=\int_{\mathbb R^{2N}}p_\varepsilon(t,x,y)f(y)dy,
$$
where $X_t^\varepsilon$ solves SDE
$$
dX_t=\frac{\nabla \varphi_\varepsilon(X_t)}{\varphi_\varepsilon(X_t)}dt + \sqrt{2}dB_t \quad \text{ in } \mathbb R^{2N}.
$$

\begin{theorem}
\label{thm1}
Assume that the strength of attraction between the particles $\nu$ satisfies
\begin{equation}
\label{alpha_max}
\nu<\max_{1 \leq \alpha<2} \biggl[ \frac{N^{\frac{3}{2}\alpha-1}}{(N-1)^{1+\frac{\alpha}{2}}}2^\alpha \frac{\Gamma(\frac{1}{2}+\frac{\alpha}{4})^2}{\Gamma(\frac{1}{2}-\frac{\alpha}{4})^2}\biggr]^{\frac{1}{\alpha}}.
\end{equation}
Then the following are true:

\begin{enumerate}[label=(\roman*)]

\item {\rm (A priori upper heat kernel bound)} 
\begin{equation*}
p_\varepsilon(t,x,y)\;\leq\; C t^{-N}\varphi_{\varepsilon}(y)
\end{equation*}
 for all $t \in ]0,T]$, $x, y \in \mathbb R^{2N}$, for a constant $C=C(N,T)$  independent of $\varepsilon$.

\medskip

\item {\rm (A posteriori upper heat kernel bound)} There exists the limit
$$
s\mbox{-}L^2\mbox{-}\lim_{\varepsilon \downarrow 0}e^{-t\Lambda_\varepsilon} \quad (\text{loc.\,uniformly in $t \geq 0$}),
$$
that determines a strongly continuous semigroup on $L^2(\mathbb R^{2N})$, say, $e^{-t\Lambda}$, where $\Lambda$ is an appropriate operator realization in $L^2(\mathbb R^{2N})$ of the formal differential expression $-\Delta - \frac{\nabla \varphi}{\varphi} \cdot \nabla$. 

$e^{-t\Lambda}$ is a semigroup of integral operators:
$$
e^{-t\Lambda}f(x)=:\int_{\mathbb R^{dN}}p(t,x,y)f(y)dy.
$$
Their integral kernel $p(t,x,y)$ is defined to be the heat kernel of Keller-Segel type system \eqref{ks2}. 

We can now pass to the limit in (\textit{i}): for every $t \in ]0,T]$,
\begin{equation*}
p(t,x,y)\;\leq\; C t^{-N}\varphi(y), 
\end{equation*}
a.e.\,on $\mathbb R^{2N} \times \mathbb R^{2N}$.
\end{enumerate}
\end{theorem}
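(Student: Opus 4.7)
The strategy is to fix $\varepsilon>0$, derive the bound in (\textit{i}) with constants independent of $\varepsilon$, and then send $\varepsilon \downarrow 0$ to obtain (\textit{ii}). The $\varepsilon$-uniform bound would be produced by placing ourselves in the setting of the abstract desingularization theorem (Theorem A) of \cite{KSS}: that theorem transforms a suitable dispersion estimate for $e^{-t\Lambda_\varepsilon}$ into a pointwise heat kernel bound of exactly the shape $p_\varepsilon(t,x,y)\leq C t^{-N}\varphi_\varepsilon(y)$, provided that $\varphi_\varepsilon$ is a (weak) Lyapunov function for $\Lambda_\varepsilon^\ast$ (which it is, by construction) and that a quadratic form inequality relating $\Lambda_\varepsilon$ to $\varphi_\varepsilon$ holds uniformly in $\varepsilon$. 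In dimension two both inputs are obstructed by the absence of the usual Hardy inequality, and must be set up in a fractional Hilbert scale via \eqref{frac2}.

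\textbf{Step 1 (form bound).} I would apply \eqref{frac2} pairwise, with a free parameter $\alpha\in[1,2)$, to the drift $\frac{\nabla\varphi_\varepsilon}{\varphi_\varepsilon}\cdot\nabla$ in order to obtain an estimate of the form
$$
\Big|\big\langle \tfrac{\nabla\varphi_\varepsilon}{\varphi_\varepsilon}\cdot\nabla u,\,u\big\rangle\Big| \;\leq\; C_\alpha(\nu,N)\,\|(-\Delta)^{\alpha/4}u\|_2^2,
$$
with $C_\alpha(\nu,N)<1$ precisely when $\nu$ satisfies the bracketed inequality in \eqref{alpha_max} for that $\alpha$. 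Optimizing over $\alpha\in[1,2)$ gives the admissible range \eqref{alpha_max} and, simultaneously, the $\varepsilon$-uniform coercivity of $\Lambda_\varepsilon$ in the shifted Hilbert triple of Bessel potential spaces announced in the introduction. The combinatorial factor $N^{3\alpha/2-1}/(N-1)^{1+\alpha/2}$ appears by summing pairwise fractional Hardy estimates over $1\le i\neq j\le N$ and keeping track of the product structure of $\varphi_\varepsilon$.

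\textbf{Steps 2--3 (Nash dispersion and Sem\"enov iteration).} From the form bound I would run a Nash-type smoothing argument in the fractional scale: the fractional Sobolev embedding for $\mathcal W^{\alpha/2,2}$ (coupled with \eqref{frac2}) supplies a Nash inequality strong enough to yield $L^1\to L^2$ ultracontractivity of $e^{-t\Lambda_\varepsilon}$ at rate $t^{-N/2}$ and hence, by duality and the semigroup property, $L^1\to L^\infty$ decay at rate $t^{-N}$ in the effective dimension $2N$. Following the classical device of Sem\"enov \cite{S}, I would then iterate this in a $\varphi_\varepsilon$-weighted $L^p$-scale, which converts the unweighted ultracontractivity into the $\varphi_\varepsilon$-weighted dispersion estimate required as input by Theorem A; this delivers (\textit{i}).

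\textbf{Step 4 (passing to the limit).} For (\textit{ii}) the $\varepsilon$-uniform form bound of Step 1 gives equiboundedness and equicontinuity of $\{e^{-t\Lambda_\varepsilon}\}$ in $L^2$, while the pointwise bound of (\textit{i}) gives uniform $L^\infty$-boundedness of $\{\varphi_\varepsilon(\cdot)^{-1}p_\varepsilon(t,x,\cdot)\}$. Extracting a strong-$L^2$ and weak-$\ast$-$L^\infty$ limit along $\varepsilon\downarrow 0$ identifies a strongly continuous semigroup $e^{-t\Lambda}$ and its kernel $p(t,x,y)$, with the bound in (\textit{ii}) inherited from (\textit{i}) by Fatou. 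The main obstacle will be Steps 2--3: implementing Nash's argument in the Bessel potential scale and pairing it with a Sem\"enov-type weighted iteration is considerably more delicate than the classical $W^{1,2}$-version, because the quadratic form now controls only $\|(-\Delta)^{\alpha/4}u\|_2^2$ in place of $\|\nabla u\|_2^2$, and the usual chain rule on $u^p$ has no exact nonlocal substitute; this is precisely the place where the choice of $\alpha$ and the product structure of $\varphi_\varepsilon$ must be matched with the pairwise combinatorics to reproduce the bracketed expression in \eqref{alpha_max}.
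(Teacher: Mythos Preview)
Your overall architecture—verify the hypotheses of Theorem~A for $e^{-t\Lambda_\varepsilon}$ with $\varepsilon$-uniform constants, then pass to the limit—matches the paper's, but two of the central mechanisms are misidentified and would not work as written.

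First, the displayed form bound in Step~1 cannot hold: $\langle b_\varepsilon\cdot\nabla u,u\rangle$ carries one full gradient together with a $|x|^{-1}$-type weight, so under rescaling it behaves like $\|\nabla u\|_2^2$, not like $\|(-\Delta)^{\alpha/4}u\|_2^2$ for any $\alpha<2$; the inequality fails by homogeneity. What the paper actually proves is an $\alpha$-form bound on the \emph{potential} $|b_\varepsilon|^\alpha$, namely $\langle |b_\varepsilon|^\alpha g,g\rangle\le\delta\,\|(-\Delta)^{\alpha/4}g\|_2^2$ with $\delta<1$ precisely under \eqref{alpha_max}. This is then used not to make a quadratic form coercive but to show that a resolvent Neumann series converges: one writes $(\eta+\Lambda_\varepsilon)^{-1}=(\eta-\Delta)^{-1}-(\eta-\Delta)^{-\frac12-\frac\alpha4}Q(1+T)^{-1}R(\eta-\Delta)^{-\frac12+\frac\alpha4}$ with $R=b_\varepsilon^{\alpha/2}\cdot\nabla(\eta-\Delta)^{-\frac12-\frac\alpha4}$, $Q=(\eta-\Delta)^{-\frac12+\frac\alpha4}|b_\varepsilon|^{1-\alpha/2}$ and $\|T\|_{2\to2}=\|RQ\|_{2\to2}\le\delta^{1/\alpha}<1$. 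Second, you have the inputs to Theorem~A reversed. The dispersion hypothesis $(S_1)$ is \emph{unweighted}, $\|e^{-t\Lambda_\varepsilon}\|_{2\to\infty}\le ct^{-N}$; if your Nash argument really produced $L^1\to L^\infty$ ultracontractivity, Theorem~A would be superfluous and $p_\varepsilon$ would be bounded without any weight—exactly what the blow-up forbids. The weighted ingredient is $(S_4)$, and for the modified system it is a one-line computation: $\varphi_\varepsilon\Lambda_\varepsilon\varphi_\varepsilon^{-1}=-\Delta+\nabla\cdot\big(\tfrac{\nabla\varphi_\varepsilon}{\varphi_\varepsilon}\,\cdot\,\big)$ is in divergence form, hence generates an $L^1$ contraction. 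Sem\"enov's device in the paper is not a weighted $L^p$-iteration but a resolvent argument to obtain the unweighted $(S_1)$: one writes $\Gamma^{-2\beta}$ as $\int_0^\infty \mu^{1-2\beta}(\mu+\Gamma)^{-2}\,d\mu$ and combines the Neumann representation with the Sobolev embedding $(\mu+\Gamma_0)^{-\beta}L^2\subset L^r$ to get $\|e^{-t\Gamma}\|_{2\to r}\le Ct^{-\beta}$, then extrapolates against $\|e^{-t\Lambda_\varepsilon}\|_{\infty\to\infty}\le 1$. This route entirely avoids the nonlocal chain-rule obstacle you flag. For (\textit{ii}) the paper does not extract a subsequential limit by compactness but invokes Trotter's approximation theorem, whose three hypotheses are read off directly from the resolvent representation.
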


\begin{figure}[ht]
    \centering

        \includegraphics[width=0.7\linewidth]{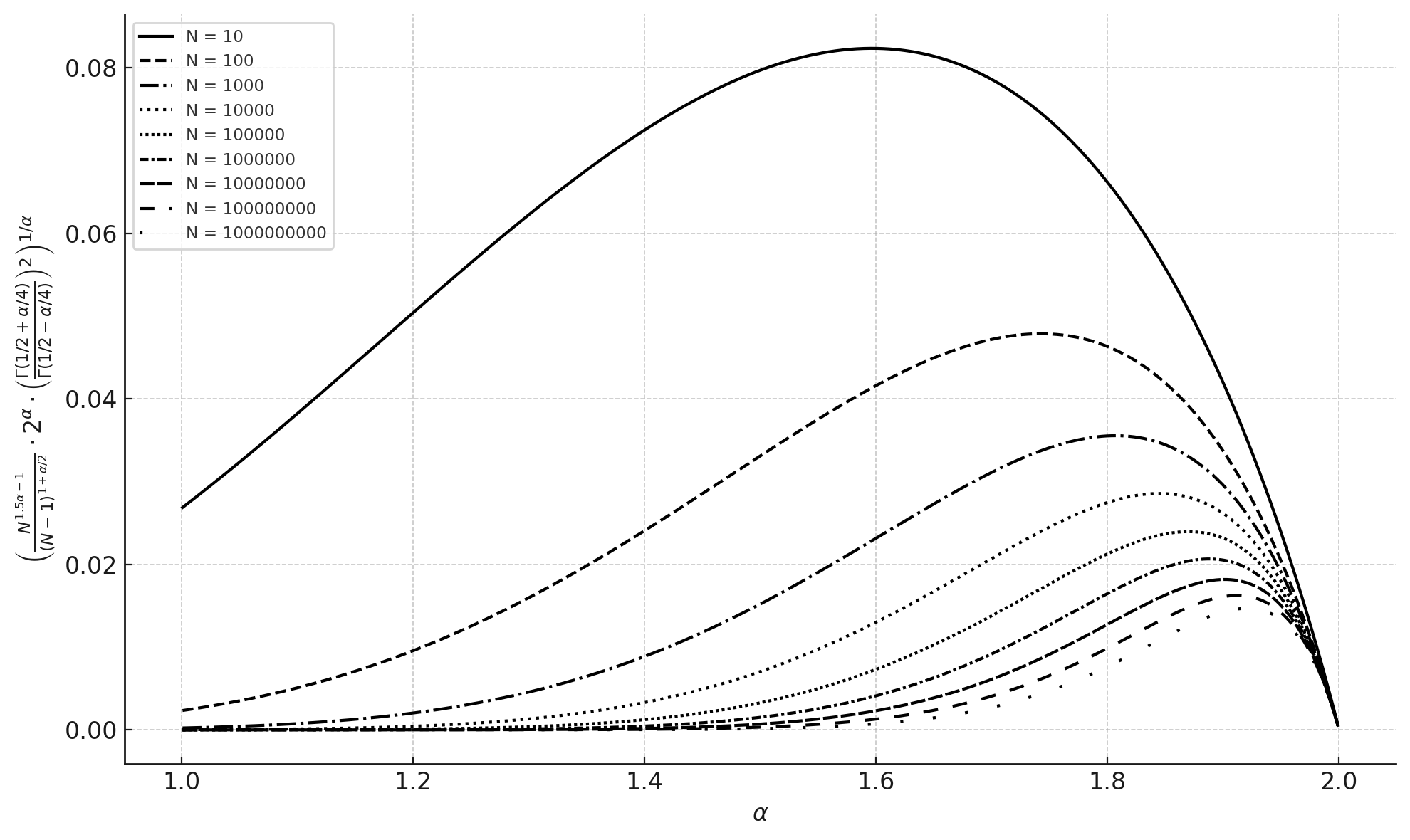}

	\caption{\textit{\small The graph of the function $[1,2[ \ni \alpha \mapsto \biggl[ \frac{N^{\frac{3}{2}\alpha-1}}{(N-1)^{1+\frac{\alpha}{2}}}2^\alpha \frac{\Gamma(\frac{1}{2}+\frac{\alpha}{4})^2}{\Gamma(\frac{1}{2}-\frac{\alpha}{4})^2}\biggr]^{\frac{1}{\alpha}}$ for different values of $N$.
	}
	}
	\label{fig1}

\end{figure}

\subsection{Comments}
\label{comment_sect}

1.~We expect the upper heat kernel bound in Theorem \ref{thm1} to be optimal at $t=1$ around the ``collision hyperplanes'' $x^i=x^j$. An improved upper bound must take into account that it takes time for the singularities around the collision hyperplanes to propagate, where the tradeoff between the distance in time is determined by the parabolic scaling. More precisely, we expect a sharper upper bound to have form
$$
p(t,x,y)\;\leq\; C t^{-N}\varphi_t(y), 
$$
where $\varphi_t(y)=\varphi(\frac{y}{\sqrt{t}})$. In fact, we will introduce the proper time-dependence in the weight, and will also include a Gaussian factor, in Theorem \ref{thm2}, that deals with higher dimensions $d \geq 3$. It should be added that there already exist heat kernel bounds for the Kolmogorov operator with polar drift, which can be viewed as corresponding to the two-particle system, i.e.
$L=-\Delta + \nu |x|^{-2}x\cdot \nabla$
 in $\mathbb R^d$:
\begin{equation}
\label{bd_w}
c_1\Gamma_{c_2 t}(x-y)\hat{\varphi}_t(y) \leq e^{-tL}(x,y) \leq c_3\Gamma_{c_4 t}(x-y)\hat{\varphi}_t(y),
\end{equation}
for all $t>0$ and a.e.\,$x,y \in \mathbb R^d$,
where
$$
\Gamma_{t}(x):=(4\pi t)^{-\frac{d}{2}}e^{-\frac{|x|^2}{4t}} \quad \text{ and } \quad
\hat{\varphi}_t(y)=\left\{ 
\begin{array}{ll} 
\big(\frac{|y|}{\sqrt{t}}\big)^{-\nu}, & |y| \leq \sqrt{t}, \\
\frac{1}{2}, & |y| \geq 2\sqrt{t}.
\end{array}
\right.
$$
This result is essentially contained in \cite{MS} and is a special case of the result in \cite{MSS}, although the construction of the a posteriori heat kernels in \cite{MSS} is quite different.

Note that, with this dependence on $t$, the weight disappears as $t \downarrow 0$, and so at time $t=0$ one recovers the delta-function at $x=y$, as one would expect.

2.~The a priori upper heat kernel bound Theorem \ref{thm1}(\textit{i}) is a consequence of the following general desingularization theorem.
Let $X$ be a locally compact topological space, let  $\mu$ be a $\sigma$-finite Borel measure on $X$. Let $L^\infty_{{\rm com}}$ denote bounded functions with compact support.

\begin{theoremA}[\cite{KSS}]
{\it Let $e^{-t\Lambda}$ be a strongly continuous semigroup in $L^r(X,\mu)$ for some $r>1$, such that it satisfies a dispersion estimate for some $a>0$:
\begin{equation}
\label{S1}
\tag{$S_1$}
\|e^{-t\Lambda}\|_{r \rightarrow \infty} \leq ct^{-\frac{a}{r}}, \quad t \in ]0,T],
\end{equation}
but $e^{-t\Lambda}$ is not an ultra-contraction\footnote{i.e.\eqref{S1} does not hold for $r=1$; if \eqref{S1} does hold for $r=1$, then we obtain right away a stronger upper bound than \eqref{nie}, i.e.\,without the weight in the right-hand side. }. Assume that there exists a  real-valued weight $\varphi$ on $X$ such that 
\begin{equation}
\label{S2}
\tag{$S_2$}
\varphi, \frac{1}{\varphi} \in L^1_{\loc}(X,\mu),
\end{equation}
\begin{equation}
\label{S3}
\tag{$S_3$}
\varphi  \geq c_0
\end{equation}
for a strictly positive constant $c_0>0$
and there exists constant $c_1$ such that, for all $t\in ]0,T]$,
\begin{equation}
\label{S4}
\tag{$S_4$}
\|\varphi e^{-t\Lambda}\varphi^{-1}f\|_{1} \leq c_1\|f\|_{1}, \quad f \in L^\infty_{{\rm com}}.
\end{equation}
Then $e^{-t\Lambda}$ are integral operators, 
and there exists constant $C=C(a,c_0,c_1)$ such that for every $t \in ]0,T]$, the integral kernel $e^{-t\Lambda}(x,y)$ satisfies
\[
\label{nie}
\tag{$N$}
|e^{-t\Lambda}(x,y)|\leq Ct^{-a}  \varphi(y)
\]
for $\mu$-a.e. $x,y \in X$.}
\end{theoremA}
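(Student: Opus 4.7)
My plan is to upgrade the dispersion $(S_1)$ to the weighted operator bound
\[
\|e^{-t\Lambda}\|_{L^1(\varphi\, d\mu)\to L^\infty} \leq C t^{-a},
\]
which is equivalent, via Dunford--Pettis representation of bounded operators from $L^1$-type spaces to $L^\infty$, to the sought pointwise kernel estimate $|e^{-t\Lambda}(x,y)|\le Ct^{-a}\varphi(y)$. Observe that $(S_1)$ already furnishes an integral kernel $p_t(x,y)$ with $\|p_t(x,\cdot)\|_{r'}\le ct^{-a/r}$ a.e., so only the correct weighting needs to be installed. Hypothesis $(S_4)$ reads $\|e^{-t\Lambda}\|_{L^1(\varphi d\mu)\to L^1(\varphi d\mu)}\le c_1$, and combined with $(S_3)$ yields the auxiliary unweighted bound $\|e^{-t\Lambda}\|_{L^1(\varphi d\mu)\to L^1}\le c_1/c_0$. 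The local integrability $(S_2)$ ensures that the conjugated operator $\varphi\, e^{-t\Lambda}\,\varphi^{-1}$ makes sense on representatives.

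The engine of the proof is a self-improving functional inequality. For $f\in L^\infty_{\rm com}$ with $\|\varphi f\|_1>0$, decompose $e^{-2t\Lambda}f=e^{-t\Lambda}(e^{-t\Lambda}f)$, apply $(S_1)$ to the outer factor, and interpolate $\|g\|_r\le\|g\|_\infty^{1/r'}\|g\|_1^{1/r}$ on $g:=e^{-t\Lambda}f$, controlling $\|g\|_1$ through the auxiliary $L^1$ bound above. Setting $\widetilde\psi(t):=\|e^{-t\Lambda}f\|_\infty/\|\varphi f\|_1$, the factors of $\|\varphi f\|_1$ cancel exactly, leaving
\[
\widetilde\psi(2t)\;\le\; K\, t^{-a/r}\, \widetilde\psi(t)^{1/r'},\qquad K:=c\,(c_1/c_0)^{1/r},
\]
with $K$ independent of $f$. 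On the other hand $(S_1)$ directly gives the initial uniform-in-$t$ bound $\widetilde\psi(t)\le M_0(f)\,t^{-a/r}$ with $M_0(f):=c\|f\|_r/\|\varphi f\|_1<\infty$.

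Iterating the functional inequality yields $\widetilde\psi(t)\le M_n(f)\,t^{-\alpha_n}$ with recursions
\[
\alpha_{n+1}=\frac{a}{r}+\frac{\alpha_n}{r'},\qquad \log M_{n+1}=\log K+\Bigl(\frac{a}{r}+\frac{\alpha_n}{r'}\Bigr)\log 2+\frac{1}{r'}\log M_n.
\]
Both recursions are contractive with rate $1/r'<1$ (here $r>1$ is essential): $\alpha_n\to a$, and $\log M_n(f)$ converges to the fixed point $r(\log K+a\log 2)$, yielding $M_\ast=K^r\,2^{ra}$ which is \emph{independent of $f$}; the $f$-dependence of $M_0(f)$ is wiped out by the geometric factor $(1/r')^n$. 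Passing to the limit in $n$ at each fixed $t>0$ gives $\|e^{-t\Lambda}f\|_\infty\le M_\ast\,t^{-a}\|\varphi f\|_1$ for every $f\in L^\infty_{\rm com}$; by density this is the operator norm bound with $C=M_\ast$, and Dunford--Pettis converts it to the pointwise kernel estimate.

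The main obstacle is the absence of any uniform a priori finiteness of $\|e^{-t\Lambda}\|_{L^1(\varphi d\mu)\to L^\infty}$: such finiteness would be (a weighted form of) ultracontractivity, which is explicitly excluded by hypothesis. The argument circumvents this by running the bootstrap \emph{at the level of individual test functions}, and the decisive point is that the contracting factor $1/r'$ eliminates the $f$-dependence of the initial constant in the limit. The strict positivity $\varphi\ge c_0$ from $(S_3)$ is used only to bridge weighted and unweighted $L^1$ in the interpolation step, and $r>1$ is exactly what makes the recursion contractive.
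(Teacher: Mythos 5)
Your proposal is correct and follows essentially the same route as the paper: derive the weighted $L^1$ bound $\|e^{-t\Lambda}h\|_1\le c_0^{-1}c_1\|\varphi h\|_1$ from \eqref{S3} and \eqref{S4}, then bootstrap this together with \eqref{S1} via the split $e^{-2t\Lambda}=e^{-t\Lambda}e^{-t\Lambda}$ and the interpolation $\|g\|_r\le\|g\|_\infty^{1/r'}\|g\|_1^{1/r}$, closing with Dunford--Pettis; this is exactly the content of the weighted Coulhon--Raynaud extrapolation theorem (Theorem~\ref{thmE2}) the paper invokes. The only cosmetic difference is that you run the bootstrap as an explicit iteration with exponents $\alpha_n\uparrow a$ and constants $M_n(f)\to M_\ast$, whereas the paper's proof of Theorem~\ref{thmE2} collapses the same iteration into the one-shot self-bounding inequality $R_{2T}\le M^{1-\beta}R_T^\beta$ combined with $R_T\le R_{2T}$.
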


(The constant $C$ is given explicitly in terms of $c_0$, $c_1$ and $a$, see Appendix \ref{app_A}.)

\medskip

Theorem A was introduced in \cite{KSS} for different purposes, i.e.\,to prove an upper bound on the heat kernel of the fractional Kolmogorov operator with singular polar drift in $\mathbb R^d$, $d \geq 3$: 
\begin{equation}
\label{frac_kolm}
(-\Delta)^{\frac{a}{2}} + \kappa \frac{x}{|x|^a}\cdot \nabla, \quad 1<a<2.
\end{equation}
It is interesting to note that in \cite{KSS} the verification of condition \eqref{S1} was relatively easy, i.e.\,via the standard Sobolev inequality and a variant of the classical Nash's argument in $L^r$. The main difficulty in \cite{KSS} is in the verification of \eqref{S4} due to the non-locality of $(-\Delta)^{\frac{a}{2}}$. In the present paper, however, the main difficulty is in the verification of \eqref{S1}, while verifying \eqref{S4} is easy since our Kolmogorov operator is local.

The proof of Theorem A uses a weighted variant of the Coulhon-Raynaud extrapolation theorem which is interesting on its own. To make the paper self-contained, we included the proof in Appendix \ref{app_A}.

3. The process
$$R_t:=\frac{1}{4N}\sum_{i,j=1}^N|X_t^i-X_t^j|^2$$ is a local squared Bessel process of dimension $\delta=(N-1)(2-\frac{\nu}{2})$, i.e.\,$
R_t=R_0+2\int_0^t \sqrt{R_s}dW_s + \delta t,
$ see \cite{F}.
In turn, the corresponding Bessel process $X_t=\sqrt{R_t}$,
i.e.
$$
X_t=X_0+\frac{\delta-1}{2}\int_0^t X_s^{-1}ds + W_t, \quad X_0 \geq 0,
$$
has explicit heat kernel
\begin{equation}
\label{bessel_heat}
p(t,x,y)=\left\{
\begin{array}{ll}
\frac{y}{t}\big(\frac{y}{x}\big)^{\frac{\delta}{2}-1}e^{-\frac{x^2+y^2}{2t}}I_{\frac{\delta}{2}-1}\big(\frac{xy}{t}\big), & x,y>0 \\
\frac{2^{-\frac{\delta}{2}+1}}{\Gamma(\frac{\delta}{2})}t^{-\frac{\delta}{2}}y^{\delta-1}e^{-\frac{y^2}{2t}}, & y>0,\; x=0,
\end{array}
\right.
\end{equation}
where $I_{\frac{\delta}{2}-1}$ is the modified Bessel function (see e.g.\,\cite[Ch.\,XI, \S 1]{RevuzYor}). We are not aware of any explicit formulas for the heat kernel of the Keller-Segel particle system \eqref{ks}. (It should be added that, as is well known, having an explicit formula for a heat kernel does not necessarily mean that it is easy to derive from it some practical elementary estimates, such as the one in Theorem \ref{thm1}.) Still, the question arises whether the methods that we use in the present paper is an overkill. To some extent, they are, i.e.\,they allow to prove more than what we are asking for in this paper (hence more restrictive than one would hope conditions on the strength of attraction between the particles). To illustrate this, in the end of the next section we  discuss extending the upper heat kernel bound in Theorem \ref{thm2} to the particle system  additionally ``immersed'' in a turbulent flow.

4.~
Along the proof of Theorem \ref{thm1} we show that the many-particle drift $b:=\frac{\nabla \varphi}{\varphi}:\mathbb R^{2N} \rightarrow \mathbb R^{2N}$ in the operator $\Lambda$ belongs to the class $\mathbf{F}^{\scriptscriptstyle 1/2}_\delta$ of weakly form-bounded vector fields, i.e.\,$b \in L^1_{\loc}(\mathbb R^{2N})$ and
$$
\||b|^{\frac{1}{2}}(-\Delta)^{-\frac{1}{4}}\|_{L^2(\mathbb R^{2N}) \rightarrow L^2(\mathbb R^{2N})} \leq \delta
$$
for $\delta=C \sqrt{\nu}$. This includes many classes of singular vector fields found in the literature, such as essentially the largest Morrey class: for an $\varepsilon>0$ fixed arbitrarily small, the integral of $|b|^{1+\varepsilon}$ over a ball of radius $r>0$ in $\mathbb R^{2N}$ must satisfy $\int_{B_r(x)} |b|^{1+\varepsilon} \leq cr^{2N-1-\varepsilon}$ for constant $c$ independent of $r$ or the centre $x \in \mathbb R^{2N}.$ There is a detailed Sobolev regularity theory of parabolic equations with weakly form-bounded drifts, and a weak solution theory of the corresponding stochastic differential equations, see \cite{KiS_brownian,KiS_JDE, Ki_Morrey} (regarding SDEs with Morrey class drifts, see also Krylov \cite{Kr} and references therein, although he deals with a different part of the Morrey scale since he also allows irregular diffusion coefficients). All these results require, naturally, the weak form-bound $\delta$ to be sufficiently small, otherwise one runs into blow up effects.
By considering $b:=\frac{\nabla \varphi}{\varphi}$ as a weakly form-bounded drift one thus has in the setting of Theorem \ref{thm1}:

\begin{enumerate}
\item[--]
By running Lions' variational approach in the ``non-standard'' Hilbert triple
$$\mathcal W^{-\frac{1}{2},2}(\mathbb R^{2N}) \rightarrow \mathcal W^{\frac{1}{2},2}(\mathbb R^{2N})  \rightarrow \mathcal W^{\frac{3}{2},2}(\mathbb R^{2N})$$ of Bessel potential spaces, one can show that the Cauchy problem
$
(\partial_t  + \Lambda)u=0$, $u|_{t=0}=u_0,
$
has a unique (appropriately defined) weak solution, see \cite{KiS_JDE}. 

\medskip

\item[--] For the solutions of the corresponding elliptic equation one has the following Sobolev regularity result:
$$
(\mu+\Lambda)^{-1} \text{ is a bounded operator from } \mathcal W^{-\frac{1}{r'},p}(\mathbb R^{2N}) \text{ to } \mathcal W^{1+\frac{1}{q},p}(\mathbb R^{2N})
$$
$$
\text{for $1 \leq r$ and $q<\infty$ satisfying $r<p<q$}
$$
(generally speaking, fixed close to $p$ since it gives the strongest regularity result) and $p$ that can be chosen arbitrarily large at the expense of requiring $\delta$ (and thus the strength of attraction $\nu$) to be sufficiently  small; this allows to further construct a realization of $\Lambda$ as a Feller generator using the above embedding into $\mathcal W^{1+\frac{1}{q},p}(\mathbb R^{2N})$ and applying the Sobolev embedding theorem  \cite{Ki_super}.

\end{enumerate}

\medskip

Considering $b=\frac{\nabla \varphi}{\varphi}$ as a weakly form-bounded drift  produces a quite restrictive constraint on the strength of attraction between the particles $\nu$, i.e.\,the one that corresponds to the choice $\alpha=1$ in \eqref{alpha_max}; it is important to be able to choose appropriate $1<\alpha<2$. In the proof of Theorem \ref{thm1} we introduce the class $\mathbf{F}^{\scriptscriptstyle \alpha/2}_\delta$ of $\alpha$-weakly form-bounded drifts and develop some aspects of its theory that are needed to prove Theorem \ref{thm1}. The extensions of the  Sobolev regularity results from \cite{Ki_super} and of the Lions' variational approach from \cite{KiS_JDE}  to the $\alpha$-weakly form-bounded drifts are  possible, but we will not pursue them here.

\bigskip

\section{Higher dimensions}
\label{high_dim_sect}

Let $d \geq 3$.
The following  counterpart of system \eqref{ks} is of interest since it exhibits blow up effects analogous to the ones discussed in the beginning of the introduction (with $\nu_\star=\nu_\star(d)$, see \cite{KV} for details):
\begin{equation}
\label{ks4}
dX_t^i = -  \frac{\nu}{N}\sum_{j=1, j \neq i}^N \frac{X_t^i-X_t^j}{|X_t^i-X_t^j|^2}dt + \sqrt{2}dB_t^i,
\end{equation}
where $\{B_t\}_{t \geq 0}$ is a Brownian motion in $\mathbb R^{dN}$. The Kolmogorov operator behind this particle system is operator $L$ defined in Section \ref{main_sect}:
$$
L=-\Delta + \frac{\nu}{N}\sum_{i=1}^N\sum_{j=1, j \neq i}^N \frac{x^i-x^j}{|x^i-x^j|^2} \cdot \nabla_i.
$$
Compared to Theorem \ref{thm1}, we will additionally include the Gaussian factor in the upper bound and will introduce a proper dependence of the weight on time which will allow us, in particular, to  recover in the heat kernel the Dirac delta-function at $x=y$ as $t \downarrow 0$. 

Define 
$$
\psi(x)=\prod_{1 \leq i<j \leq N} |x^i-x^j|^{-\frac{\nu}{N}}.
$$
Then \eqref{ks4} and $L$ take form 
$$
dX_t=\frac{\nabla \psi(X_t)}{\psi(X_t)}+\sqrt{2}dB_t, \quad X_t=(X_t^1,\dots,X_t^N),
$$ 
and
$
L=-\Delta - \frac{\nabla \psi}{\psi} \cdot \nabla.
$
\medskip

\noindent \textit{Case $d \geq 4$}. Recalling that $|x^i-x^j|_\varepsilon^2:=|x^i-x^j|^2+\varepsilon$, we define a regularization of $\psi$, 
$$
\psi_\varepsilon(x)=\prod_{1 \leq i<j \leq N} |x^i-x^j|_\varepsilon^{-\frac{\nu}{N}},
$$
and consider the approximating operators $L_\varepsilon=-\Delta - \frac{\nabla \psi_\varepsilon}{\psi_\varepsilon} \cdot \nabla$, i.e.
\begin{equation}
\label{L_eps_}
L_\varepsilon=-\Delta + \frac{\nu}{N}\sum_{i=1}^N\sum_{j=1, j \neq i}^N \frac{x^i-x^j}{|x^i-x^j|_\varepsilon^2} \cdot \nabla_i.
\end{equation}
The desingularizing weights will now be defined in a somewhat different manner compared to Theorem \ref{thm1}. Instead of $\varphi_\varepsilon$,  $\varphi$, we consider
$$
 \hat{\varphi}_{\varepsilon}(y):=\prod_{1 \leq i<j \leq N} \eta(|y^i-y^j|_\varepsilon),
$$
$$
 \hat{\varphi}(y):=\prod_{1 \leq i<j \leq N} \eta(|y^i-y^j|),
$$
where $0< c \leq \eta \in W_{loc}^{2,\infty}(]0,\infty[)$ is a cutoff function:
\begin{equation}
\label{eta_def}
\eta(r)=\left\{
\begin{array}{ll}
r^{-\frac{\nu}{N}} & 0<r<1, \\
e^{\frac{\nu}{N}(r-1)}r^{-2\frac{\nu}{N}} & 1 \leq r \leq 2, \\
e^{\frac{\nu}{N}} 2^{-2\frac{\nu}{N}}, & r>2.
\end{array}
\right.
\end{equation}
Let us emphasize that, taking into account $N \geq 2$, $\nu<2(d-2)$, in the region $r>2$ the cutoff function $\eta$ is bounded from below by a positive $N$-independent constant $e^{d-2} 2^{-2(d-2)}>0$.  With this choice of $\eta$, we have on the interval $[1,2]$ the following inequality that will be used in the proof of Lemma \ref{comp_lem}:
\begin{equation}
\label{lem2_ineq}
\frac{\eta'(r)}{\eta(r)}+\frac{\nu}{N}\frac{1}{r} \geq 0.
\end{equation}
The continuity of the first derivatives of $\eta$ and the boundedness of the second derivatives will also be used in the proof of Lemma \ref{comp_lem}.

It is easily seen that the weights $\hat{\varphi}_\varepsilon$, $\hat{\varphi}$ are still pointwise comparable to the weights $\varphi_\varepsilon$,  $\varphi$ in Theorem \ref{thm1}.
Next, we introduce a proper time-dependence in the weight, i.e.\,apply the parabolic scaling:
$$
 \hat{\varphi}_{t,\varepsilon}(y):=\hat{\varphi}_\varepsilon(t^{-\frac{1}{2}}y),
$$
$$
 \hat{\varphi}_{t}(y):=\hat{\varphi}(t^{-\frac{1}{2}}y).
$$
Let $k_\varepsilon(t,x,y)$ denote the heat kernel of $L_\varepsilon$. We estimate it from above in Theorem \ref{thm2}. Before stating the theorem, we discuss what in the previous definitions needs to be adjusted in the case $d=3$.

\medskip

\noindent \textit{Case $d=3$.} We regularize $\psi$ differently: 
$$
\psi_\varepsilon(x)=\prod_{1 \leq i<j \leq N} |x^i-x^j|_{[\varepsilon]}^{-\frac{\nu}{N}},
$$
where
\begin{equation}
\label{eps_reg2}
|x^i-x^j|_{[\varepsilon]}:=|x^i-x^j|+\varepsilon.
\end{equation}
(If we were to use the previous regularization of $\psi$ for $d=3$, we would have to impose more restrictive condition on $\nu$ in the proof of Lemma \ref{comp_lem} used in the proof of the weighted Sobolev embedding in Lemma \ref{sob_emb}, and therefore in Theorem \ref{thm2}.)
The other definitions do not change, up to replacing $|x^i-x^j|_\varepsilon$ by $|x^i-x^j|_{[\varepsilon]}$.
We emphasize that by dealing with this regularization of the Euclidean norm, we lose smoothness of the regularized drift and only have boundedness. Namely, using $\nabla_i\,|x^i-x^j|_{[\varepsilon]}^{-\frac{\nu}{N}}=-\frac{\nu}{N}|x^i-x^j|_{[\varepsilon]}^{-\frac{\nu}{N}-1}\frac{x^i-x^j}{|x^i-x^j|}$, we evaluate:
$$
L_\varepsilon=-\Delta + \frac{\nu}{N}\sum_{i=1}^N\sum_{j=1, j \neq i}^N \frac{1}{|x^i-x^j|+\varepsilon}\frac{x^i-x^j}{|x^i-x^j|} \cdot \nabla_i
$$
(cf.\eqref{L_eps_}). Even with an $L^\infty$ drift, our manipulations with the solution $u_\varepsilon$ of the $(\partial_t+L_\varepsilon)u_\varepsilon=0$ in the proof of Theorem \ref{thm2} are justified. (The only potentially subtle situation that we face is when we deal with singular potential $-{\rm div\,}\frac{\nabla \psi_\varepsilon}{\psi_\varepsilon}$ in the proof of the Sobolev embedding in Lemma \ref{sob_emb}, but we will be careful there.)

\begin{theorem}
\label{thm2} 
Let $d \geq 3$, $N \geq 2$.
Assume that the strength of attraction between the particles $\nu$ satisfies
$$
\nu<2(d-2).
$$
Then the following are true:
\begin{enumerate}[label=(\roman*)]

\item[{\rm (\textit{i})}] {\rm (A priori upper heat kernel bound)} 
\begin{equation*}
k_\varepsilon(t,x,y)\;\leq\; c_1 \Gamma_{c_2}(t,x-y) \hat{\varphi}_{t,\varepsilon}(y)
\end{equation*}
on $]0,T] \times \mathbb R^{dN} \times \mathbb R^{dN}$,
where constants $c_1$, $c_2$ are independent of $\varepsilon$.

\medskip

\item[{\rm (\textit{ii})}] {\rm (A posteriori upper heat kernel bound)}  For every $r \geq 2$ satisfying $r>\frac{4}{4-\frac{2\nu}{d-2}}$
there exists the limit
$$
 s\mbox{-}L^r\mbox{-}\lim_{\varepsilon \downarrow 0}e^{-t L_\varepsilon} \quad (\text{loc.\,uniformly in $t \geq 0$}),
 $$
and determines a strongly continuous semigroup on $L^r(\mathbb R^{dN})$, say, $e^{-tL}$.

$e^{-tL}$ is a semigroup of integral operators,
$$
e^{-tL}f(x)=:\int_{\mathbb R^{dN}}k(t,x,y)f(y)dy,
$$
whose integral kernel is defined to be the heat kernel of operator $L$. It satisfies for every $t \in ]0,T]$
\begin{equation}\label{posteriori_hkb}
k(t,x,y)\;\leq\; c_1\Gamma_{c_2 t}(x-y) \hat{\varphi}_t(y),
\end{equation}
a.e.\,on $\mathbb R^{dN} \times \mathbb R^{dN}$, where constants $c_1$, $c_2$ are from {\rm (\textit{i})}.
\end{enumerate}
\end{theorem}

\subsection{Comments} 1.~Regarding the optimality of the upper bound in Theorem \ref{thm2}, we invoke the ``two-particle'' two-sided heat kernel bounds due to \cite{MS,MSS} discussed after Theorem \ref{thm1}, see also Appendix \ref{two_app}. In fact, it is possible to extend the proof of the lower bound from \cite{MS} via weighted Nash's method to the particle system of Theorem \ref{thm2} under the condition $\nu<\frac{C}{N}$; this amounts to proving weighted Spectral gap inequality differently: the proof in \cite{MS} exploits the fact that, in the ``two-particle case'' the unbounded part of the desingularizing weight $\hat{\varphi}$ has compact support. We plan to address the lower heat kernel bound in a subsequent paper.

2.~Since $d \geq 3$, we can use in the proof of Theorem \ref{thm2} the many-particle Hardy inequality due to \cite{HHLT},
\begin{equation}
\label{hardy}
\frac{(d-2)^2}{N}\sum_{1 \leq i<j \leq N}\int_{\mathbb R^{dN}}\frac{|f(x)|^2}{|x^i-x^j|^2}dx \leq  \int_{\mathbb R^{dN}}|\nabla f(x)|^2 dx, \quad f \in W^{1,2}(\mathbb R^{dN}),
\end{equation}
which allows us to work in the standard framework of Dirichlet forms, prove the corresponding weighted Sobolev inequality and thus obtain an improved, compared to Theorem \ref{thm1}, upper heat kernel bound using Moser's iterations.

One can obtain \eqref{hardy}, albeit with a constant that is two times smaller, by adding up the ordinary Hardy inequalities, each in its own copy of $\mathbb R^d$. Note that there is no non-trivial analogue of \eqref{hardy} when $d=2$, see \cite{HHLT}. The constant in \eqref{hardy} is not the best possible. In fact, in dimensions $3 \leq d \leq 6$ \cite{HHLT} obtains, using an additional geometric argument, a larger constant:
\begin{equation}
\label{multi_hardy}
C_{d,N}\sum_{1 \leq i<j \leq N}\int_{\mathbb R^{dN}}\frac{|f(x)|^2}{|x^i-x^j|^2}dx \leq  \int_{\mathbb R^{dN}}|\nabla f(x)|^2 dx,
\end{equation}
with
\begin{equation}
\label{HHLT_est}
C_{d,N} \geq (d-2)^2 \max\bigg\{\frac{1}{N},\frac{1}{1+\sqrt{1+\frac{3(d-2)^2}{2(d-1)^2}(N-1)(N-2)}} \bigg\}.
\end{equation}
 To our knowledge, the question of what is the best possible constant in \eqref{hardy} is still open in all dimensions $d \geq 3$. (There is, however, an upper bound $C_{d,N} \leq \frac{d(d-2)}{N}$, see \cite{KV}, so for large $d$ \eqref{hardy} is actually close to the best possible constant.)

3.~
The proof of Theorem \ref{thm2} extends to the operator
\begin{equation}
\label{ks7}
\Lambda=- \nabla \cdot (I+C) \cdot \nabla - \frac{\nabla \psi}{\psi}\cdot (I+C)\cdot \nabla,
\end{equation}
where $C \in [L^\infty(\mathbb R^{dN})]^{d \times d}$ is a skew-symmetric matrix (Appendix \ref{C_app}). In the case there is no interaction between the particles, i.e.\,$\nu=0$ and so $\nabla \psi=0$, this operator was treated by Osada \cite{O} who has obtained two-sided Gaussian bounds on its heat kernel. This operator arises in the linear theory connected to the Navier-Stokes equations: due to the skew-symmetry of $C$, one can write
$$
- \nabla \cdot (I+C) \cdot \nabla = -\Delta - (\nabla C)\cdot \nabla,
$$
where $\nabla C$ is a distributional divergence-free drift ($=$ velocity field).
(Qian-Xi \cite{QX} extended the result of Osada to the case when the stream matrix $C$ has entries in the space ${\rm BMO}$.) The particle system determined by \eqref{ks7} can be viewed as \eqref{ks4} immersed in a turbulent flow. 
In contrast to \eqref{ks4}, in \eqref{ks7} there is an additional drift $\nabla C$ that is not only singular, but also lacks any particular structure. So, there is no hope of obtaining an explicit formula for the heat kernel of \eqref{ks7} similar to \eqref{bessel_heat} for the Bessel process.

4. To handle \eqref{ks4}, we could also repeat the proof of Theorem \ref{thm1} for operator $\Lambda$ in dimensions $d \geq 3$. Higher dimensions $d \geq 3$ give us the following condition on the admissible strength of attraction between the particles $\nu$:
\begin{equation}
\label{alpha_max2}
\nu<\max_{1 \leq \alpha \leq 2} \biggl[ \frac{N^{\frac{3}{2}\alpha-1}}{(N-1)^{1+\frac{\alpha}{2}}}2^\alpha \frac{\Gamma(\frac{d}{4}+\frac{\alpha}{4})^2}{\Gamma(\frac{d}{4}-\frac{\alpha}{4})^2}\biggr]^{\frac{1}{\alpha}}.
\end{equation}
Now, in dimensions greater or equal to three, $\alpha=2$ is admissible, moreover, $\alpha=2$ is optimal, i.e.\,it gives the least restrictive constraint on $\nu$, and this constraint essentially does not depend on $N \rightarrow \infty$. So, \eqref{alpha_max2} should be read as
\begin{equation}
\label{alpha_max3}
\nu<2\frac{N}{N-1}\frac{\Gamma(\frac{d}{4}+\frac{1}{2})}{\Gamma(\frac{d}{4}-\frac{1}{2})}.
\end{equation}
(Taking $\alpha=2$ means that we are using the usual Hardy inequality.)

\begin{figure}[H]
    \centering

        \includegraphics[width=0.7\linewidth]{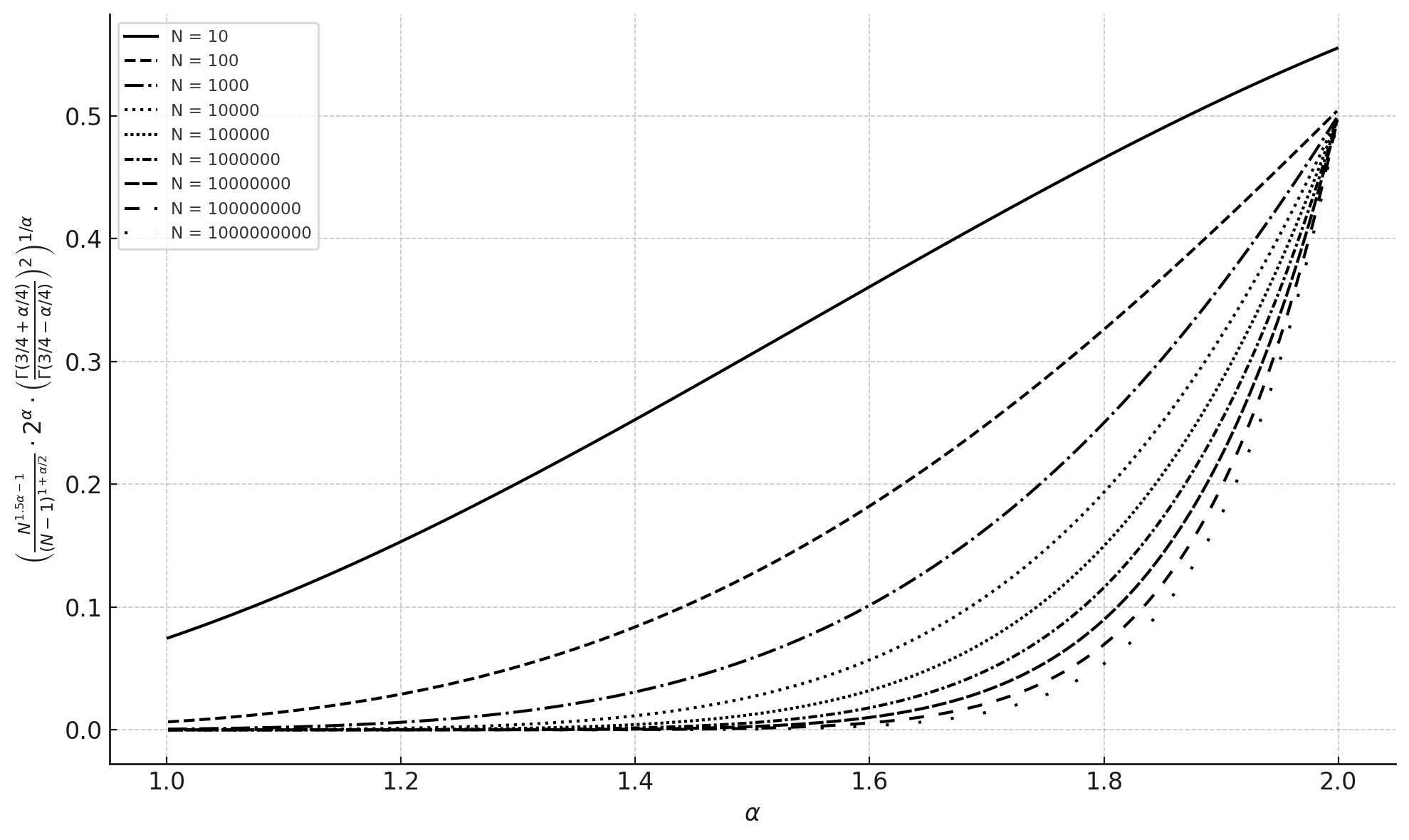}

	\caption{{\small The graph of function $[1,2] \ni \alpha \mapsto \biggl[ \frac{N^{\frac{3}{2}\alpha-1}}{(N-1)^{1+\frac{\alpha}{2}}}2^\alpha \frac{\Gamma(\frac{d}{4}+\frac{\alpha}{4})^2}{\Gamma(\frac{d}{4}-\frac{\alpha}{4})^2}\biggr]^{\frac{1}{\alpha}}$ for $d=3$ for different values of $N$.
	}
	}

\end{figure}

\bigskip

\section{Proof of Theorem \ref{thm1}}

(\textit{i}) We apply Theorem A to $\varphi:=\varphi_\varepsilon$, where $\varphi_\varepsilon$ is defined by \eqref{varphi_eps}, and $\Lambda:=\Lambda_\varepsilon$, where, recall,
$$\Lambda_\varepsilon=-\Delta - b_\varepsilon \cdot \nabla,$$
$$
b_\varepsilon=\frac{\nabla \varphi_\varepsilon}{\varphi_\varepsilon}=\frac{\nabla \psi_\varepsilon}{\psi_\varepsilon+1}, \quad \psi_\varepsilon(x)= \prod_{1 \leq i<j \leq N}|x^i-x^j|_\varepsilon^{-\frac{\nu}{N}}.
$$

The conditions \eqref{S2} and \eqref{S3} are, obviously, satisfied for $\varphi_\varepsilon$ with $c_0=1$ (here, crucially, $c_0$ does not depend on $\varepsilon$). 

Let us verify condition \eqref{S4}. Here we are dealing with operators with bounded smooth coefficients (i.e.\,$\Lambda_\varepsilon$) and with bounded smooth weights (i.e.\,$\varphi_\varepsilon$), so it is easily seen that $\varphi_\varepsilon \Lambda_\varepsilon \varphi_\varepsilon^{-1}$ is the generator of $\varphi_\varepsilon e^{-t\Lambda_\varepsilon} \varphi_\varepsilon^{-1}$ in $L^1$. Let us compute $\varphi_\varepsilon \Lambda_\varepsilon \varphi_\varepsilon^{-1}$:
\begin{align*}
\varphi_\varepsilon \Lambda_\varepsilon \varphi_\varepsilon^{-1} v = -\Delta v + \nabla \cdot \bigg(\frac{\nabla \varphi_\varepsilon}{\varphi_\varepsilon}v\bigg),
\end{align*}
so $\varphi_\varepsilon \Lambda_\varepsilon \varphi_\varepsilon^{-1}$ generates an $L^1$ contraction semigroup ($\Leftrightarrow$ \eqref{S4} holds).

Next, let us verify \eqref{S1} for $r=2$ and $a=2N$ (i.e.\,the dimension of the Euclidean space $\mathbb R^{2N}$ where our particle system exists). That is, our goal is to prove
\begin{equation}
\label{disp_est}
\|e^{-t\Lambda_\varepsilon}\|_{2 \rightarrow \infty} \leq ct^{-\frac{2N}{2}}, \quad t \in ]0,T],
\end{equation}
with $c$ independent of $\varepsilon$.
We will prove \eqref{disp_est} in three steps:

\textit{Step 1.} First, we show that, for every $1 \leq \alpha<2$, the vector field $b_{\varepsilon}$ is $\alpha$-form-bounded uniformly in $\varepsilon$, i.e.\,the following operator norm inequality holds
\begin{equation}
\label{b_g0}
\||b_\varepsilon|^{\frac{\alpha}{2}}(-\Delta)^{-\frac{\alpha}{4}}\|_{2 \rightarrow 2} \leq \sqrt{\delta}
\end{equation}
or, equivalently,
\begin{equation}
\label{b_g}
\langle |b_\varepsilon|^\alpha g,g\rangle \leq \delta \langle (-\Delta)^{\frac{\alpha}{4}}g,(-\Delta)^{\frac{\alpha}{4}}g\rangle \quad \forall\,g \in \mathcal S,
\end{equation}
with the constant $\delta$ (measuring the strength of singularities of $b_\varepsilon$ and called the $\alpha$-form-bound) given by
$$
\delta=\nu^\alpha \frac{(N-1)^{1+\frac{\alpha}{2}}}{N^{\frac{3\alpha}{2}-1}}  \frac{1}{2^\alpha} \frac{\Gamma\left( \frac{1}{2} -\frac{\alpha}{4} \right)^{2}}{\Gamma\left( \frac{1}{2} + \frac{\alpha}{4} \right)^{2}}.
$$
This will be abbreviated as $ b_{\varepsilon} \in \mathbf{F}_\delta^{\frac{\alpha}{2}}=\mathbf{F}_\delta^{\frac{\alpha}{2}}(\mathbb R^{2N})$. (At Step 2 we will need $\delta<1$, hence the condition on $\nu$. So, in order to arrive to the least restrictive conditions on $\nu$ that can be obtained using this argument, later we will choose $\alpha$ that minimizes the value of $\delta$.)

The proof of \eqref{b_g} will use the fractional Hardy inequality in $\mathbb R^2$, applied consecutively in each variable $x^i$, $i=1,\dots,N$. That is, if we denote by $b^i_\varepsilon$ the $i$-th component of $b$, i.e.
$$
b^i_\varepsilon=\frac{\nabla_{i} \varphi_\varepsilon}{\varphi_\varepsilon}=\frac{\nabla_i \psi_\varepsilon}{\psi_\varepsilon+1},
$$
then
 \begin{equation}
 \label{majorant}
 |b^i_\varepsilon| \leq \frac{|\nabla_i \psi_\varepsilon|}{\psi_\varepsilon} \leq \frac{\nu}{N}\sum_{j=1, j \neq i}^N \frac{|x^i-x^j|}{|x^i-x^j|^2+\varepsilon} \leq \frac{\nu}{N}\sum_{j=1, j \neq i}^N \frac{1}{|x^i-x^j|}.
\end{equation} 
Hence, applying Cauchy-Schwarz once, we can estimate
\begin{align*}
|b_{\varepsilon}|^{\alpha}  & = \big( \sum_{i=1}^N |b_{\varepsilon}^i|^{2} \big)^{\alpha/2}  
\leq \big(\frac{\nu}{N}\big)^\alpha(N-1)^{\frac{\alpha}{2}}  \left( \sum_{i=1}^N  \sum_{j=1, j \neq i}^N \frac{1}{|x_i-x_j|^{2}} \right)^{\alpha/2}. 
\end{align*}
Since $\frac{\alpha}{2}<1$, we further obtain 
$$
|b_{\varepsilon}|^{\alpha} \le  \big(\frac{\nu}{N}\big)^\alpha(N-1)^{\frac{\alpha}{2}}  \sum_{i=1}^N \sum_{j=1, j \neq i}^N \frac{1}{|x_i-x_j|^{\alpha}}.
$$
Therefore, returning to our task of estimating the LHS of \eqref{b_g}, we write
\begin{align*}
\langle |b_\varepsilon|^{\alpha}g, g\rangle & \leq \big(\frac{\nu}{N}\big)^\alpha(N-1)^{\frac{\alpha}{2}}  \sum_{i=1}^N  \sum_{j=1,j \neq i}^N \left\langle  \frac{1}{|x_i-x_j|_{}^{\alpha}}g, g\right\rangle,
\end{align*}
where, denoting by $\bar{x}$ vector $x$ with component $x^i$ removed, we further estimate 
\begin{align*}
 \left\langle  \frac{1}{|x_i-x_j|_{}^{\alpha}}g,g \right\rangle & = \int_{\mathbb R^{(N-1)2}}\int_{\mathbb R^2}\frac{1}{|x_i-x_j|_{}^{\alpha}} g^{2}(x^i,\bar{x})dx^i d\bar{x}, \\
 & (\text{we apply the fractional Hardy inequality \cite[Lemma 2.7]{KPS} in $x^i \in \mathbb R^2$}) \\
& \le  \frac{1}{2^\alpha} \frac{\Gamma\left( \frac{1}{2} -\frac{\alpha}{4} \right)^{2}}{\Gamma\left( \frac{1}{2} + \frac{\alpha}{4} \right)^{2}} \int_{\mathbb R^{(N-1)2}}\int_{\mathbb R^2} | (-\Delta_{x_i})^{\frac{{\alpha}}{4}}g(x_i,\bar{x})|^2dx_i d\bar{x} \\
& \equiv \frac{1}{2^\alpha} \frac{\Gamma\left( \frac{1}{2} -\frac{\alpha}{4} \right)^{2}}{\Gamma\left( \frac{1}{2} + \frac{\alpha}{4} \right)^{2}}\|(-\Delta_{x_i})^{\frac{{\alpha}}{4}}g\|_2^2.
\end{align*}
Thus,
\begin{align*}
\langle |b_\varepsilon|^{\alpha}g, g\rangle & \leq \big(\frac{\nu}{N}\big)^\alpha(N-1)^{\frac{\alpha}{2}} \frac{1}{2^\alpha} \frac{\Gamma\left( \frac{1}{2} -\frac{\alpha}{4} \right)^{2}}{\Gamma\left( \frac{1}{2} + \frac{\alpha}{4} \right)^{2}}\sum_{i=1}^N  \sum_{j=1,j \neq i}^N\|(-\Delta_{x_i})^{\frac{{\alpha}}{4}}g\|_2^2 \\
& = \big(\frac{\nu}{N}\big)^\alpha(N-1)^{\frac{\alpha}{2}} \frac{1}{2^\alpha} \frac{\Gamma\left( \frac{1}{2} -\frac{\alpha}{4} \right)^{2}}{\Gamma\left( \frac{1}{2} + \frac{\alpha}{4} \right)^{2}}(N-1)\sum_{i=1}^N\|(-\Delta_{x_i})^{\frac{{\alpha}}{4}}g\|_2^2.
\end{align*}
Denoting by $\hat{g}(\xi)$ the Fourier transform of $g(x)$ and using 
$$\sum_{i=1}^N \langle |\xi_i|^\alpha \hat{g},\hat{g}\rangle  \leq  N^{1-\frac{\alpha}{2}} \langle |\xi|^\alpha \hat{g},\hat{g}\rangle,$$ 
we finally obtain
$$
\langle |b_\varepsilon|^{\alpha}g, g\rangle \leq \big(\frac{\nu}{N}\big)^\alpha(N-1)^{\frac{\alpha}{2}} \frac{1}{2^\alpha} \frac{\Gamma\left( \frac{1}{2} -\frac{\alpha}{4} \right)^{2}}{\Gamma\left( \frac{1}{2} + \frac{\alpha}{4} \right)^{2}}(N-1) N^{1-\frac{\alpha}{2}}\|(-\Delta)^{\frac{{\alpha}}{4}}g\|_2^2,
$$
i.e.\,we have arrived at \eqref{b_g}.

\begin{remark} Above we added up the usual fractional Hardy inequalities, each in its own copy of $\mathbb R^2$, to obtain  a 
many-particle fractional Hardy inequality in $\mathbb R^{2N}$. In the case $\alpha=2$ there is a direct way to prove the many-particle Hardy inequality \eqref{hardy}. It gives a better (two times larger) constant  than the one obtained in a na\"{i}ve way by adding up the usual Hardy inequalities. This was noted in \cite{HHLT}, see brief discussion in the beginning of Section \ref{high_dim_sect}. It is reasonable to expect that there is a direct way to prove the fractional many-particle fractional Hardy inequality that gives a better constant than the one obtained above (in this regard, see related results in \cite{FHLS}, however, dealing only with anti-symmetric test functions). Such an improvement would allow to relax the constraint on the strength of attraction between the particles $\nu$ in Theorem \ref{thm1}.
\end{remark}

\medskip

\textit{Step 2.}~Having bound \eqref{b_g0} at hand, we can establish the following resolvent representation for $\Lambda_\varepsilon$ in the complex half-plane; it will play a crucial role in what follows. That is, for all $\Real\,\eta > 0$,
\begin{equation}
\label{lambda_repr}
(\eta+\Lambda_\varepsilon)^{-1}=(\eta-\Delta)^{-1}-(\eta-\Delta)^{-\frac{1}{2}-\frac{\alpha}{4}}Q(1+T)^{-1}R(\eta-\Delta)^{-\frac{1}{2}+\frac{\alpha}{4}},
\end{equation}
where\footnote{$b^{\frac{\alpha}{2}}:=b|b|^{-1+\frac{\alpha}{2}}$} 
\begin{align*}
& R:=b_\varepsilon^{\frac{\alpha}{2}}\cdot \nabla (\eta-\Delta)^{-\frac{1}{2}-\frac{\alpha}{4}}, \\[2mm]
& Q:=(\eta-\Delta)^{-\frac{1}{2}+\frac{\alpha}{4}}|b_\varepsilon|^{1-\frac{\alpha}{2}}
\end{align*}
and
$$
T:=R\,Q.
$$
The  operators $R$, $Q$ and $T$ are uniformly in $\varepsilon$ bounded on $L^2$, and $\|T\|_{2 \rightarrow 2}<1$, so the geometric series in \eqref{lambda_repr} converges; once this is established, one can see right away that \eqref{lambda_repr} is the Neumann series for $(\eta+\Lambda_\varepsilon)^{-1}$.
In detail,
\begin{align*}
\|R\|_{2 \rightarrow 2} & \leq \||b_\varepsilon|^{\frac{\alpha}{2}}(\eta-\Delta)^{-\frac{\alpha}{4}}\|_{2 \rightarrow 2}\|\nabla(\eta-\Delta)^{-\frac{1}{2}}\|_{2 \rightarrow 2} \\
& \leq \||b_\varepsilon|^{\frac{\alpha}{2}}(-\Delta)^{-\frac{\alpha}{4}}\|_{2 \rightarrow 2} \leq \sqrt{\delta},
\end{align*}
where we have used the boundedness of the Riesz transform and estimate \eqref{b_g} combined with $|(\eta-\Delta)^{-\frac{\alpha}{4}} (x,y)| \leq (-\Delta)^{-\frac{\alpha}{4}} (x,y),$ which is immediate from
\begin{equation*}
(\eta-\Delta)^{-\frac{\alpha}{4}} (x,y) = \frac{1}{\Gamma\bigl(\frac{\alpha}{4}\bigr)}\int_0^\infty e^{-\eta t} t^{\frac{\alpha}{4}-1} (4\pi t)^{-N}e^{-\frac{|x-y|^2}{4t}} dt.
\end{equation*}
Next, by duality,
$$
\|Q\|_{2 \rightarrow 2} = \||b|^{1-\frac{\alpha}{2}}(\eta-\Delta)^{-\frac{1}{2}+\frac{\alpha}{4}}\|_{2 \rightarrow 2} \leq \delta^{\frac{2-\alpha}{2\alpha}},
$$
where at the last step we have applied the Heinz inequality to \eqref{b_g0}, which amounts to raising both operators under the operator norm in \eqref{b_g0} to power $\frac{2-\alpha}{\alpha}<1$. 
It now follows that
$$
\|T\|_{2 \rightarrow 2} \leq \|R\|_{2\rightarrow 2}\|Q\|_{2 \rightarrow 2} \leq \delta^{\frac{1}{\alpha}}.
$$
Now, our condition on the strength of attraction between the particles $\nu$ ensures that $\delta<1$, so $\|T\|_{2 \rightarrow 2} <1$ and thus the geometric series in \eqref{lambda_repr} converges.

Crucially, the estimates on the operator norms of $R$, $Q$ and $T$ are independent of $\varepsilon$.

As an immediate consequence of \eqref{lambda_repr}, we obtain
\begin{equation}
\label{eta_est}
\|(\eta+\Lambda_\varepsilon)^{-1}\|_{2 \rightarrow 2} \leq \frac{C}{|\eta|}, \quad \Real\,\eta>0,
\end{equation}
where $C>1$ is independent of $\varepsilon$,
so 
\begin{equation}
\label{bdd_est}
\|e^{-t\Lambda_\varepsilon}\|_{2 \rightarrow 2} \leq K e^{\omega t}, \quad t \geq 0
\end{equation}
for $K>1$ and $\omega$ independent of $\varepsilon$, see \cite[Ch.\,IX, Sect.\,6]{Y}. 

\begin{remark}
Note that we do not have uniform in $\varepsilon$ quasi contraction estimate for $e^{-t\Lambda_\varepsilon}$ in $L^2$, i.e.\,$\|e^{-t\Lambda_\varepsilon}\|_{2 \rightarrow 2} \leq e^{\omega t}$ for some $\omega$ independent of $\varepsilon$. This is the reason why we need to consider complex values of $\eta$ in \eqref{eta_est}. 
Estimate \eqref{bdd_est} will also be needed in the proof of (\textit{ii}).
\end{remark}

\medskip

\textit{Step 3.}~We are in position to establish the dispersion estimate \eqref{disp_est}. The following argument, up to a few minor modifications, is due to Sem\"{e}nov \cite{S}. The plan is as follows: we will prove that, for some $r \in ]2, \infty[,$
\[
\| e^{-t \Lambda_\varepsilon} \|_{2 \rightarrow r} \leq C t^{-N \big(\frac{1}{2} - \frac{1}{r} \big)}  \quad \quad t \in ]0,T]. \tag{$\star$} 
\]
Then, using the extrapolation (Theorem \ref{thmE2} with $\psi=1$) between the previous estimate and the straightforward a priori bound $\|e^{-t\Lambda_\varepsilon} f \|_\infty \leq \|f\|_\infty$, we will arrive at the sought estimate  \eqref{disp_est}.

Let us prove $(\star)$. Let $\lambda \geq 1.$ Set $$\Gamma_0 = \lambda - \Delta,\quad \Gamma = \lambda + \Lambda_\varepsilon.$$ 
It follows from \eqref{lambda_repr} that, for every $\mu \geq 0$, 
\begin{align}
\|(\mu + \Gamma)^{-1} \|_{2 \rightarrow 2} & \leq (1-\delta^{\frac{1}{\alpha}})^{-1} \|(\mu + \Gamma_0)^{-\frac{1}{2}-\frac{\alpha}{4}} \|_{2 \rightarrow 2} \|(\mu + \Gamma_0)^{-\frac{1}{2}+\frac{\alpha}{4}} \|_{2 \rightarrow 2} \notag \\
& \leq (1-\delta^{\frac{1}{\alpha}})^{-1} (1+ \mu)^{-1}. \label{gamma_1}
\end{align}
Hence, by interpolation, for any $r \in ]2, \infty [$,
$$
 \|(\mu + \Gamma)^{-1} \|_{r \rightarrow r} \leq c (1+\mu)^{-1}.
 $$
Next, given $0<\beta<1$, we have well-defined fractional powers
\[
\Gamma^{-\beta} = \frac{\sin \pi \beta}{\pi} \frac{1}{1-\beta} \int_0^\infty \mu^{1-\beta} (\mu + \Gamma)^{-2} d\mu
\]
and $\Gamma^\beta := \big( \Gamma_r^{-\beta} \big)^{-1}$. We have
\begin{equation}
\label{e1}
\|\Gamma^\beta (\mu + \Gamma)^{-1} \|_{2 \rightarrow 2} \leq C(1+ \mu)^{-1 +\beta}
\end{equation}
and
\begin{equation}
\label{e2}
\|\Gamma^\beta e^{-t \Gamma} \|_{2 \rightarrow 2} \leq C t^{-\beta},
\end{equation}
(for the proof see e.g. \cite[Ch.\,4]{KZPS}).

Now, fix $\beta = \frac{1}{4}$ and $r = 2\frac{2 N}{2N-1}.$ Let $$F_t := \Gamma^{2 \beta} e^{-t \Gamma} f, \quad f \in L^2 \cap L^r.$$ We have
\begin{equation}
\label{gamma_2}
\|e^{-t \Gamma} f \|_r = \| \Gamma^{-2 \beta} F_t \|_r \leq \frac{2}{\pi} \int_0^\infty \mu^{1-2 \beta} \|(\mu + \Gamma_r)^{-2} F_t \|_r d \mu.
\end{equation}
Let us estimate the right-hand side.
Using the embedding $(\mu +\Gamma_0)^{-\beta} L^2 \subset L^r$, we obtain from \eqref{lambda_repr}
\begin{align*}
\| (\mu + \Gamma)^{-2} F_t \|_r \leq  c_1\|(\mu + \Gamma_0)^{-\frac{1}{4}-\frac{\alpha}{4}} ( 1 + T)^{-1} (\mu + \Gamma_0)^{-\frac{1}{2}+\frac{\alpha}{4}} (\mu + \Gamma)^{-1} F_t \|_2.
\end{align*}
Therefore, by \eqref{gamma_1}, 
$$
\|(\mu + \Gamma)^{-2} F_t \|_r \leq c_1(1-\delta^{\frac{1}{\alpha}})^{-1} \mu^{-3 \beta} \|(\mu + \Gamma)^{-1} F_t \|_2 .
$$
Thus, returning to \eqref{gamma_2} and using \eqref{e1}, \eqref{e2} and $\|e^{-t\Lambda_\varepsilon}\|_{2 \rightarrow 2} \leq Ke^{\omega t}$,
\begin{align*}
\| e^{-t \Gamma} f \|_r & \leq C \int_0^\infty \mu^{-\beta} \|(\mu + \Gamma)^{-1} F_t \|_2 \; d \mu\\
&\leq C_1 \bigg ( \int_0^\frac{1}{t} \mu^{-\beta} (1 + \mu)^{-1+2 \beta} d \mu \;\|f\|_2 + \int_\frac{1}{t}^\infty \mu^{-\beta-1} d \mu \; \|\Gamma^{2 \beta} e^{-t \Gamma} f \|_2 \bigg) \\
& \leq C_2 \bigg ( \int_0^\frac{1}{t} \mu^{-\frac{3}{4}} d \mu + \int_\frac{1}{t}^\infty \mu^{-\frac{5}{4}} d \mu \; t^{-2 \beta} \bigg) \|f\|_2 = 4 C_2 \; t^{-\beta} \|f\|_2.
\end{align*}
Note that, with our choice of $\beta$ and $r$, we have $\beta=N(\frac{1}{2}-\frac{1}{r})$.
This ends the proof of $(\star)$.

\medskip

In view of the discussion in the beginning of Step 3, $(\star)$ gives us \eqref{disp_est}, i.e.\,we have verified \eqref{S3}.
Now Theorem A, i.e.\,the upper bound \eqref{nie}, yields assertion (\textit{i}).

\medskip

(\textit{ii}) The convergence of the semigroups $e^{-t\Lambda_\varepsilon}$ to a strongly continuous semigroup $e^{-t\Lambda}$ will follow from the Trotter approximation theorem (Appendix \ref{app_B}). Its conditions:

$1^\circ$) $\sup_{\varepsilon>0}\|(\zeta+\Lambda_\varepsilon)^{-1}\|_{2 \rightarrow 2} \leqslant C|\zeta|^{-1}$, $\Real \zeta>0$. 

$2^\circ$) 
$\mu (\mu+\Lambda_\varepsilon)^{-1} \overset{s}{\rightarrow} 1 \text{ in $L^2$ as } \mu\uparrow \infty \text{ uniformly in $\varepsilon$}.$

$3^\circ$) There exists ${\text{\small $s$-$L^2$-}}\lim_{\varepsilon \downarrow 0} (\zeta+\Lambda_\varepsilon)^{-1}$ for some $\Real \zeta>0$.

\medskip

$1^\circ$) is the content of \eqref{eta_est}.
$2^\circ$) is proved in the same way as in \cite{Ki_super} (see also \cite{KiS-theory}), using the resolvent representation \eqref{lambda_repr} where the gradient in the last occurrence of $R$ is placed on the function on which the resolvent acts (it suffices to verify $2^\circ$) e.g.\,on $C_c^\infty$). This gives us an extra $\mu^{-\frac{1}{2}}$, which allows us to prove the convergence in $2^\circ$). 
$3^\circ$) is also proved in the same way as in \cite{Ki_super} or  \cite{KiS-theory}, that is, we apply in the resolvent representation \eqref{lambda_repr} the convergence
$$
R(b_\varepsilon) \rightarrow R(b), \quad Q(b_\varepsilon) \rightarrow Q(b) \quad \text{ strongly in }L^2
$$
as $\varepsilon \downarrow 0$.
The latter is proved using the Dominated convergence theorem: we have a.e.\,convergence $b_\varepsilon \rightarrow b$, which is immediate from the definition of these vector fields, and appropriate majorant given by \eqref{majorant}.

\medskip

We now pass to the limit $\varepsilon \downarrow 0$ in assertion (\textit{i}). The latter is equivalent to 
$$
\|e^{-t\Lambda_\varepsilon }f\|_{\infty} \leq ct^{-N}\|\varphi_\varepsilon f\|_1, \quad t \in ]0,T], \quad f \in C_c(\mathbb R^{2N}).
$$
We can pass to the limit $\varepsilon \downarrow 0$ in both sides using convergence (\textit{ii}). Now the Dunford-Pettis theorem yields the existence of the integral kernel of $e^{-t\Lambda}$ ($=:$\,heat kernel $p(t,x,y)$), and the sought upper heat kernel bound follows e.g.\,using the Lebesgue differentiation theorem.

\bigskip

\section{Proof of Theorem \ref{thm2}}

\subsubsection{Some notations}
1.~We are going to use the many-particle Hardy inequality \eqref{hardy}, so it will be convenient to re-normalize the strength of attraction between the particles $\nu$ as follows:
$$
\nu=\sqrt{\kappa}\frac{d-2}{2}.
$$
where $\kappa>0$. The condition of Theorem \ref{thm2} now takes form:
$$
\kappa<16
$$
in all dimensions $d \geq 3$.

\medskip

2.~In addition to the weights defined before Theorem \ref{thm2}, we define a time-dependent version of $\psi_\varepsilon$:
$$
\psi_{s,\varepsilon}(y):=\prod_{1 \leq i<j \leq N} (s^{-\frac{1}{2}}|y^i-y^j|_\varepsilon)^{-\sqrt{\kappa}\frac{d-2}{2}\frac{1}{N}}
$$
if $d \geq 4$; if $d=3$, replace $|y^i-y^j|_\varepsilon$ by $|y^i-y^j|_{[\varepsilon]}$.

\medskip

3.~It will be convenient to introduce notation for the drift in the approximating operator $L_\varepsilon$:
$$
b_\varepsilon(y)=-\frac{\nabla \psi_{s,\varepsilon}}{\psi_{s,\varepsilon}}
$$
Notice that $s$ gets cancelled out, so $b_\varepsilon$ does not depend on $s$. But it is convenient to keep $s$ because we are going to compare drift $b_\varepsilon$ \textit{that actually interests us} with the drift that will have to deal with at the next stage when we plunge in the Dirichlet forms setting:
$$
\bar{b}_\varepsilon(y):=-\frac{\nabla \hat{\varphi}_{s,\varepsilon}}{\hat{\varphi}_{s,\varepsilon}}.
$$
The vector field on the left-hand side depends on $s$.

\subsubsection{Reduction to the Dirichlet form setting}
\begin{lemma}
\label{bb_lem}
We have
\begin{equation}
\label{bb_est}
\bigl|\bar{b}_\varepsilon-b_\varepsilon\bigr|   \leq \frac{C}{\sqrt{s}}\frac{N-1}{\sqrt{N}} \text{ on } \mathbb R^{dN}.
\end{equation}
\end{lemma}
\begin{proof} Let $d \geq 4$.
We have
\begin{align*}
   - \frac{\nabla_{x_1}\psi_{s, \varepsilon}}{\psi_{s, \varepsilon}} & = - \nabla_{x_1} \log \psi_{s,\varepsilon} = - \sum_{1 \le i<j \le N}   \nabla_{x_1} \log \left( \frac{\vert x^i - x^j \vert_{\varepsilon}}{\sqrt{s}}\right)^{-\sqrt{\kappa}\frac{d-2}{2}\frac{1}{N}} \\  
 & = \sqrt{\kappa}\frac{d-2}{2}\frac{1}{N} \sum_{1 \le i<j \le N}   \nabla_{x_1} \log \left( \sqrt{ \vert x^i - x^j \vert^2 + \varepsilon} \right) \\
 & = \sqrt{\kappa}\frac{d-2}{2}\frac{1}{N} \sum_{2 \le k \le N} \frac{1}{\sqrt{ \vert x^i - x^j \vert^2 + \varepsilon}} \frac{2 (x^1 -x^k)}{2\sqrt{ \vert x^i - x^j \vert^2 + \varepsilon}}  
 \\
 & =  \sqrt{\kappa}\frac{d-2}{2}\frac{1}{N} \sum_{2 \le k \le N} \frac{x^1 - x^k}{ \vert x^1 - x^k \vert_{\varepsilon}^2},
\end{align*}
and, 
\begin{align*}
 -\frac{\nabla_{x_1} \hat{\varphi}_{s,\varepsilon}}{\hat{\varphi}_{s,\varepsilon}} & = - \nabla_{x_1} \log \hat{\varphi}_{s,\varepsilon}  =  - \sum_{1 \le i<j \le N}   \nabla_{x_1} \log \eta\left( \frac{\vert x^i - x^j \vert_{\varepsilon}}{\sqrt{s}}\right) \\
& = -\sum_{2 \leq k \leq N} \nabla_{x_1} \log \eta\left( \frac{\vert x^1 - x^k \vert_{\varepsilon}}{\sqrt{s}}\right).
\end{align*}
We need to estimate the difference between these sums on the entire space $\mathbb R^{dN}$: 
\begin{equation}
\label{main_repr}
\big|    - \frac{\nabla_{x_1}\psi_{s, \varepsilon}}{\psi_{s, \varepsilon}} + \frac{\nabla_{x_1} \hat{\varphi}_{s,\varepsilon}}{\hat{\varphi}_{s,\varepsilon}}  \big|^2=:\left|\sum_{2 \leq k \leq N} J_k \right|^2,
\end{equation}
where
$$
J_k:=\sqrt{\kappa}\frac{d-2}{2}\frac{1}{N} \frac{x^1 - x^k}{ \vert x^1 - x^k \vert_{\varepsilon}^2} + \nabla_{x_1} \log \eta\left( \frac{\vert x^1 - x^k \vert_{\varepsilon}}{\sqrt{s}}\right).
$$
We estimate each term $J_k$.
We find for each $2 \leq k \leq N$:

(a) If  $|x^1-x^k|_\varepsilon < \sqrt{s}$,  then
$$
J_k=\sqrt{\kappa}\frac{d-2}{2}\frac{1}{N} \frac{x^1 - x^k}{ \vert x^1 - x^k \vert_{\varepsilon}^2} +  \nabla_{x_1} \log \eta\left( \frac{\vert x^1 - x^k \vert_{\varepsilon}}{\sqrt{s}}\right) =0.  
$$

(b) On $|x^1-x^{k}|_\varepsilon > 2\sqrt{s}$ the function $x_1 \mapsto \log \eta\left( \frac{\vert x^1 - x^k \vert_{\varepsilon}}{\sqrt{s}}\right)$ is constant, so its gradient $\nabla_{x_1}$ is zero. Therefore, for such $x^1$ and $x^k$ 
$$
J_k=\sqrt{\kappa}\frac{d-2}{2}\frac{1}{N} \frac{x^1 - x^k}{ \vert x^1 - x^k \vert_{\varepsilon}^2} +  \nabla_{x_1} \log \eta\left( \frac{\vert x^1 - x^k \vert_{\varepsilon}}{\sqrt{s}}\right)  = \sqrt{\kappa}\,\frac{d-2}{2}\,\frac{1}{N}\, \frac{x^1-x^{k}}{|x^1-x^{k}|_\varepsilon^{2}},
$$
where the denominator in the right-hand side is bounded away from zero, and so  $|J_k| \leq \frac{C}{N}s^{-\frac{1}{2}}$.

(c) Over the annulus $\sqrt{s} \leq |x^1-x^k|_\varepsilon \leq 2\sqrt{s}$ both terms are non-singular, hence $|J_k| \leq \frac{C}{N}s^{-\frac{1}{2}}$ (in the second term, this is seen by making a change of variables $y:=x/\sqrt{s}$).

\medskip

Returning to \eqref{main_repr} and combining (a)-(c), we obtain
\begin{align*}
| \bar{b}_{\varepsilon} - b_{\varepsilon} |^2 & = \sum_{i=1}^N \left|\sum_{k=1, k \neq i}^N J_k^i \right|^2  \leq \sum_{i=1}^N (N-1) \sum_{k=1, k \neq i}^N |J_k^i|^2 \\
& \leq C(N-1)\frac{1}{N^2} \frac{(N-1)N}{2} \frac{1}{s},
\end{align*}
which ends the proof of Lemma \ref{bb_lem} in the case $d \geq 4$.

\medskip

The proof in the case $d=3$ is basically identical: we have $|x^i-x^j|_{[\varepsilon]}$ instead of $|x^i-x^j|_\varepsilon$, but what mattered -- i.e.\,that $\hat{\varphi}_{s,\varepsilon}$ and $\psi_{s,\varepsilon}$ coincide when $x^i$ and $x^j$ are close to each other -- remains valid.
\end{proof}

Lemma \ref{bb_lem} allows us to compare  the heat kernel $k_\varepsilon(t,x,y)$ of operator $L_\varepsilon$ with the heat kernel $\bar{k}_\varepsilon(t,x,y)$ of the intermediate operator $-\Delta + \bar{b}_\varepsilon \cdot \nabla$:
\begin{equation}
\label{p_bar_p}
k_\varepsilon(t,x,y) \approx e^{\frac{C^2}{s}\frac{(N-1)^2}{N}t}\bar{k}_\varepsilon(t,x,y).
\end{equation}
Thus, it suffices for us to prove the following upper bound, for each \textit{fixed} $0<s \leq t$:
\begin{equation}
\label{p2}
\bar{k}_\varepsilon(t,x,y)\;\leq\; c_0 e^{\frac{c t}{s}}\Gamma_{c_2}(t,x-y)\hat{\varphi}_{s,\varepsilon}(y),
\end{equation}
(with constant independent of $\varepsilon$), and then in the end take $s=t$, thus rendering the exponential terms both in \eqref{p_bar_p} and \eqref{p2} constant in $t$; then in Theorem \ref{thm2} $c_1=c_0e^c e^{C^2\frac{(N-1)^2}{N}}$.

\begin{remark}
We do not take $s=t$ right from the start because it will make the intermediate drift $\bar{b}_\varepsilon$ time-dependent and more difficult to deal with.
\end{remark}

In the rest of the proof of (\textit{i}), we put for brevity
\begin{equation}
\label{varphi_def}
\hat{\varphi}:=\hat{\varphi}_{s,\varepsilon}.
\end{equation}
Let $A_\varepsilon$ denote the self-adjoint operator associated with the sequilinear form $a_\varepsilon[u,w]=\langle\nabla u, \nabla w\rangle_{\hat{\varphi}}$ on $W^{1,2}(\mathbb R^{dN})$:
\begin{equation}
\label{form}
a_\varepsilon[u,w]=\langle A_\varepsilon u,w\rangle_{\hat{\varphi}} \quad u \in D(A)=W^{2,2}(\mathbb R^{dN}), \quad w \in D(a),
\end{equation}
where
$$
\langle f \rangle:=\int_{\mathbb R^{dN}} f \hat{\varphi}, \quad \langle f,g \rangle_{\hat{\varphi}}:=\langle fg \rangle_{\hat{\varphi}}.
$$
We have
\begin{equation}
\label{A_def}
A_\varepsilon=(-\nabla + \bar{b}_\varepsilon)\cdot \nabla. 
\end{equation}
Let $q_{\varepsilon}(t,x,y):=e^{-t A_\varepsilon}(x,y)$ denote the heat kernel of the semigroup $e^{-t A_\varepsilon}$ acting in $L^2_{\hat{\varphi}}$, so that
\begin{equation}
\label{q_def}
q_\varepsilon(t,x,y)\hat{\varphi}(y)=\bar{k}_\varepsilon(t,x,y).
\end{equation}
Combining \eqref{p_bar_p} and \eqref{q_def}, we see that the proof of the upper bound \eqref{p2} reduces to 
establishing Gaussian upper bound 
\begin{equation}
\label{q_est}
 q_\varepsilon(t,x,y) \leq  c_0 e^{\frac{c t}{s}}\Gamma_{c_2}(t,x-y).
\end{equation}

\begin{remarks}
1.~For every $\varepsilon>0$, the weight $\hat{\varphi}_{s,\varepsilon}$ is bounded and smooth ($d \geq 4$) or is in $W_{loc}^{1,\infty}(\mathbb R^d)$ ($d=3$), and is bounded from below by a positive constant. Thus, we use Dirichlet forms rather at the level of notations and terminology, i.e.\,all objects such as operator $A_\varepsilon$ are defined independently using the classical means.

2.~We will of course need the regularity of the weight $\hat{\varphi}_{s,\varepsilon}$ to justify our manipulations with the parabolic equations below.

3.~To summarize, our plan for estimating the heat kernels introduced above is as follows:
\begin{equation}
\label{equiv}
\boxed{
k_\varepsilon(t,x,y) \quad \overset{\eqref{p_bar_p}} {\longleftrightarrow}\quad \bar{k}_\varepsilon(t,x,y)\quad  \overset{\eqref{q_def}}{\longleftrightarrow} \quad q_{\varepsilon}(t,x,y).\medskip}
\end{equation}
Let us emphasize that parameter $s$ enters the definitions of $q_\varepsilon$ and $\bar{k}_\varepsilon$, but it does not enter the definition of $k_\varepsilon$.
\end{remarks}

\textit{In what follows, all constants are independent of $\varepsilon>0$. We omit index $\varepsilon>0$ where possible.}

\subsubsection{Sobolev embedding} 
Set $$
\|u\|_{p,\hat{\varphi}}:=\langle |u|^p\hat{\varphi}\rangle^{1/p}.
$$
The Sobolev exponent in $\mathbb R^{dN}$:
$$
\ell:=\frac{dN}{dN-2}.
$$
The next estimate will be used in the proof of the weighted Sobolev embedding (Lemma \ref{sob_emb}). 

\begin{lemma}
\label{comp_lem}
\begin{equation}
\label{phi_calc}
-{\rm div\,}\frac{\nabla \hat{\varphi}}{\hat{\varphi}}
\leq \sqrt{\kappa}\frac{(d-2)^2}{N} \sum_{1 \leq i <j \leq N} \frac{1}{|x^i-x^j|^2} + \frac{b_0(x)}{s},
\end{equation}
where $b_0 \in L^\infty(\mathbb{R}^{dN})$ (it comes from the cutoff function $\eta$ in the definition of $\hat{\varphi}$). 
\end{lemma}

We prove Lemma \ref{comp_lem}, arguing similarly to the proof of Lemma \ref{bb_lem}, in Appendix \ref{proof_comp_lem}.

\bigskip

The next lemma is proved under somewhat less restrictive conditions on $\kappa$ compared to Theorem \ref{thm2}.

\begin{lemma}
\label{sob_emb} Let $d \geq 3$, $N \geq 2$.
Assume that $\kappa<16 \ell^2$.
Then
$$\|u\|^2_{2\ell,\hat{\varphi}} \leq C_1 a[u,u] + \frac{C_2}{s}\langle u^2 \rangle_{\hat{\varphi}}, \quad u \in W^{1,2}(\mathbb R^{dN}),$$
where constants $C_i=C_i(dN,\kappa)$, $i=1,2$ (so, in particular, they are independent of $\varepsilon$).
\end{lemma}

\begin{proof}[Proof of Lemma \ref{sob_emb}] 
It suffices for us to consider $u \in \mathcal S(\mathbb R^{dN})$ and then use the standard density argument. By the usual Sobolev inequality on $\mathbb R^{dN}$, 
\begin{align}
\label{int_0}
\|u\|^2_{2\ell,\hat{\varphi}} =\langle |u|^{2\ell}\rangle_{\hat{\varphi}}^{1/\ell} =\big\langle (|u|\hat{\varphi}^{\frac{1}{2\ell}})^{2\ell} \big\rangle^{1/\ell} \leq C_S \big\langle |\nabla (u\hat{\varphi}^{\frac{1}{2\ell}})|^2\big\rangle,
\end{align}
where, in turn,
\begin{align}
\big\langle |\nabla (u\hat{\varphi}^{\frac{1}{2\ell}})|^2\big\rangle & = \big\langle |\nabla u|^2 \hat{\varphi}^{\frac{1}{\ell}}\big\rangle + \frac{2}{2\ell} \big\langle u \nabla u,\hat{\varphi}^{\frac{1}{\ell}}\frac{\nabla \hat{\varphi}}{\hat{\varphi}}\big\rangle + \frac{1}{4\ell^2} \big\langle u^2, \hat{\varphi}^{\frac{1}{\ell}} \frac{|\nabla \hat{\varphi}|^2}{\hat{\varphi}^2} \big\rangle \notag \\
& (\text{we apply Cauchy-Schwarz to the middle term}) \notag \\
& \leq \big(1+\frac{1}{\epsilon}\big) \big\langle |\nabla u|^2 \hat{\varphi}^{\frac{1}{\ell}}\big\rangle + \frac{1}{4\ell^2}(1+\epsilon)\big\langle u^2, \hat{\varphi}^{\frac{1}{\ell}} \frac{|\nabla \hat{\varphi}|^2}{\hat{\varphi}^2} \big\rangle. \label{int_1}
\end{align} 
We estimate the last term in \eqref{int_1} using additionally integration by parts:
\begin{align*}
\left\langle u^2, \hat{\varphi}^{\frac{1}{\ell}} \frac{|\nabla \hat{\varphi}|^2}{\hat{\varphi}^2} \right\rangle \equiv  \left\langle u^2, \hat{\varphi}^{\frac{1}{\ell}-1} \nabla \hat{\varphi}\cdot\frac{\nabla \hat{\varphi}}{\hat{\varphi}} \right\rangle & = - 2\left\langle u \nabla u,\hat{\varphi}^{\frac{1}{\ell}-1}\hat{\varphi} \frac{\nabla \hat{\varphi}}{\hat{\varphi}}\right\rangle \\
& - \biggl(\frac{1}{\ell}-1\biggr) \left\langle u^2, \hat{\varphi}^{\frac{1}{\ell}} \frac{|\nabla \hat{\varphi}|^2}{\hat{\varphi}^2} \right\rangle - \left\langle u^2,\hat{\varphi}^{\frac{1}{\ell}-1} \hat{\varphi}\, {\rm div\,}\frac{\nabla \hat{\varphi}}{\hat{\varphi}}\right\rangle,
\end{align*}
hence, noting that the second term in the RHS in exactly the LHS times $-(\frac{1}{\ell}-1)$,
\begin{align*}
\left\langle u^2, \hat{\varphi}^{\frac{1}{\ell}} \frac{|\nabla \hat{\varphi}|^2}{\hat{\varphi}^2} \right\rangle & = -2\ell \left\langle u \nabla u,\hat{\varphi}^{\frac{1}{\ell}}\frac{\nabla \hat{\varphi}}{\hat{\varphi}}\right\rangle - \ell\left\langle u^2,\hat{\varphi}^{\frac{1}{\ell}} {\rm div\,}\frac{\nabla \hat{\varphi}}{\hat{\varphi}}\right\rangle \\
& (\text{apply quadratic inequality}) \\
& \leq \alpha \left\langle u^2, \hat{\varphi}^{\frac{1}{\ell}} \frac{|\nabla \hat{\varphi}|^2}{\hat{\varphi}^2} \right\rangle  + \frac{\ell^2}{\alpha} \left\langle |\nabla u|^2 \hat{\varphi}^{\frac{1}{\ell}}\right\rangle - \ell\left\langle u^2,\hat{\varphi}^{\frac{1}{\ell}} {\rm div\,}\frac{\nabla \hat{\varphi}}{\hat{\varphi}}\right\rangle, \quad 0<\alpha<1,
\end{align*}
and so, finally,
\begin{align*}
\left\langle u^2, \hat{\varphi}^{\frac{1}{\ell}} \frac{|\nabla \hat{\varphi}|^2}{\hat{\varphi}^2} \right\rangle \leq \frac{1}{1-\alpha} \biggl[ \frac{\ell^2}{\alpha} \left\langle |\nabla u|^2 \hat{\varphi}^{\frac{1}{\ell}}\right\rangle - \ell\left\langle u^2,\hat{\varphi}^{\frac{1}{\ell}} {\rm div\,}\frac{\nabla \hat{\varphi}}{\hat{\varphi}}\right\rangle\biggr].
\end{align*}
Now, we substitute the previous estimate into \eqref{int_1}. Given fixed arbitrarily small $0<\gamma<1$, we can find sufficiently $\epsilon>0$, $\alpha>0$ such that
\begin{equation}
\label{u_phi}
\left\langle |\nabla (u\hat{\varphi}^{\frac{1}{2\ell}})|^2\right\rangle \leq C \left\langle |\nabla u|^2 \hat{\varphi}^{\frac{1}{\ell}}\right\rangle - \frac{(1-\gamma)}{4\ell}\left\langle u^2,\hat{\varphi}^{\frac{1}{\ell}} {\rm div\,}\frac{\nabla \hat{\varphi}}{\hat{\varphi}}\right\rangle.
\end{equation}
We control the last term with the divergence using Lemma \ref{comp_lem} -- the sign in front of the divergence is important. That is, substituting \eqref{phi_calc} into \eqref{u_phi} and invoking Hardy's inequality \eqref{hardy}, we obtain
$$
\langle |\nabla (u\hat{\varphi}^{\frac{1}{2\ell}})|^2\rangle \leq C\langle |\nabla u|^2 \hat{\varphi}^{\frac{1}{\ell}}\rangle + (1-\gamma)\frac{1}{4\ell}\sqrt{\kappa}\frac{(d-2)^2}{N}\frac{N}{(d-2)^2} \langle |\nabla (u\hat{\varphi}^{\frac{1}{2\ell}})|^2\rangle + \frac{K}{s}\langle u^2 \hat{\varphi}^{\frac{1}{\ell}}\rangle
$$
(constant $K>0$ comes from $b_0$), i.e. 
$$
\biggl(1-(1-\gamma)\frac{\sqrt{\kappa}}{4\ell} \biggr)\left\langle |\nabla (u\hat{\varphi}^{\frac{1}{2\ell}})|^2\right\rangle \leq C\left\langle |\nabla u|^2 \hat{\varphi}^{\frac{1}{\ell}}\right\rangle +  \frac{K}{s}\left\langle u^2 \hat{\varphi}^{\frac{1}{\ell}}\right\rangle
$$
Our condition on $\kappa$ is $\kappa<16\ell^2$, so $ \sqrt{\kappa}<4\ell$. Since this inequality is strict, we can always choose $\gamma$ sufficiently small so that the factor in the left-hand side of the previous inequality is strictly positive, and then
$$
\left\langle |\nabla (u\hat{\varphi}^{\frac{1}{2\ell}})|^2\right\rangle \leq C_1 \left\langle |\nabla u|^2 \hat{\varphi}^{\frac{1}{\ell}}\right\rangle + \frac{C_2}{s}\left\langle u^2 \hat{\varphi}^{\frac{1}{\ell}}\right\rangle.
$$
Finally, applying $\hat{\varphi}^{\frac{1}{\ell}}  =\hat{\varphi} \hat{\varphi}^{\frac{1}{\ell}-1} \leq C \hat{\varphi}$ (since $\hat{\varphi} \geq c$, where $c>0$ is independent of $N$), we obtain
$$
\langle |\nabla (u\hat{\varphi}^{\frac{1}{2\ell}})|^2\rangle \leq C_1 \langle |\nabla u|^2\rangle_{\hat{\varphi}} + \frac{C_2}{s} \langle u^2 \rangle_{\hat{\varphi}}.
$$
This and \eqref{int_0}  give us the assertion of the lemma. 
\end{proof}

\subsubsection{Proof of the Gaussian upper bound on $q(t,x,y)$}
We now apply the ``Davies device'' and Moser's iterations in the weighted setting. Define $$ \phi _\alpha(x):=e^{\alpha \cdot x}, \quad x \in \mathbb R^{dN}$$
and put
\begin{equation*}
A_\alpha:=  \phi _\alpha  A  \phi _{-\alpha} \biggl(= A  + 2 (\alpha \cdot \nabla) + \alpha \cdot \frac{\nabla \hat{\varphi}}{\hat{\varphi}} - |\alpha|^2\biggr).
\end{equation*}
It is clear that the quadratic form of the operator $A_\alpha$ is
\begin{equation}
\label{A_exp}
\langle A_\alpha u,v\rangle_{\hat{\varphi}}=a[ \phi _{-\alpha}u,\phi_\alpha v].
\end{equation}

\begin{lemma}[Moser's lemma]
\label{moser_lem}
For all $t>0$,
$$
\|e^{-tA_\alpha}\|_{L^2_{\hat{\varphi}} \rightarrow L^\infty} \leq C_1t^{-\frac{dN}{4}} e^{ \frac{C_2}{s}t}e^{C_3|\alpha|^2t},
$$
where constants $C_1, C_2$ and $C_3$ are independent of $\varepsilon$. 
\end{lemma}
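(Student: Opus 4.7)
The plan is to prove Lemma \ref{moser_lem} by Moser's iteration adapted to the weighted measure $\varphi\,dx$, with the Sobolev embedding role played by Lemma \ref{sob_emb}. The Davies exponential conjugation $A_\alpha = \phi_\alpha A \phi_{-\alpha}$ introduces both a first-order perturbation proportional to $\alpha$ and a potential of order $|\alpha|^2$; these are what will produce the factor $e^{C_3|\alpha|^2 t}$ in the final bound.

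First I would derive the key energy inequality. Fix $p \geq 2$ and, writing $u = u_t := e^{-tA_\alpha}f$ with $f \geq 0$ smooth (the general case follows by splitting $f = f_+ - f_-$), one uses \eqref{A_exp} to compute
$$
\frac{d}{dt}\|u\|_{p,\varphi}^p = -p\,a[\phi_{-\alpha}u,\phi_\alpha u^{p-1}].
$$
Expanding $\nabla(\phi_{-\alpha}u) = \phi_{-\alpha}(\nabla u - \alpha u)$, $\nabla(\phi_\alpha u^{p-1}) = \phi_\alpha(\alpha u^{p-1}+(p-1)u^{p-2}\nabla u)$ and introducing $w := u^{p/2}$ (so that $u^{p-2}|\nabla u|^2 = \tfrac{4}{p^2}|\nabla w|^2$ and $u^{p-1}\nabla u = \tfrac{2}{p} w\,\nabla w$), the right-hand side becomes
$$
-\frac{4(p-1)}{p}\,a[w,w] + 2(p-2)\langle w\,\alpha\cdot\nabla w\rangle_\varphi + p|\alpha|^2\|w\|_{2,\varphi}^2.
$$
Applying Young's inequality to absorb the cross term into half of the leading $a[w,w]$, and then invoking the weighted Sobolev inequality $\|w\|_{2\ell,\varphi}^2 \leq C_S\,a[w,w]$ of Lemma \ref{sob_emb} (noting $\|w\|_{2\ell,\varphi}^2 = \|u\|_{p\ell,\varphi}^p$), I obtain the Moser-type differential inequality
$$
\frac{d}{dt}\|u\|_{p,\varphi}^p \;\leq\; -\frac{c_0(p-1)}{p}\|u\|_{p\ell,\varphi}^p + C_1 p|\alpha|^2\|u\|_{p,\varphi}^p, \qquad p \geq 2,
$$
with constants uniform in $\varepsilon > 0$.

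Second, I would run the standard Moser iteration on this inequality. Setting $p_k := 2\ell^k$ and $t_k := t(1-2^{-k})$, integrating the differential inequality over $[t_k, t_{k+1}]$ after multiplication by a piecewise-linear time cut-off and extracting a supremum over $[t_{k+1}, t]$ yields a recursion of the form
$$
\sup_{s\in[t_{k+1},t]}\|u_s\|_{p_{k+1},\varphi} \;\leq\; \bigl[B\,\ell^k 2^k\, t^{-1}(1+|\alpha|^2 t)\bigr]^{1/p_k}\sup_{s\in[t_k,t]}\|u_s\|_{p_k,\varphi}.
$$
Iterating from $k=0$ with $\|u_0\|_{2,\varphi} = \|f\|_{2,\varphi}$ and passing to $k \to \infty$, the identity $\sum_{k=0}^\infty p_k^{-1} = \tfrac{\ell}{2(\ell-1)} = \tfrac{dN}{4}$ produces the $t^{-dN/4}$ factor, whereas $\log(1+|\alpha|^2 t) \leq |\alpha|^2 t$ together with the convergence of $\sum_k p_k^{-1}(k\log\ell + k \log 2)$ collects the $\alpha$-dependence into a single exponential $e^{C_3|\alpha|^2 t}$. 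Since $\varphi \geq 1$ implies $\lim_k \|u_t\|_{p_k,\varphi} = \|u_t\|_\infty$, the asserted bound follows.

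The main obstacle is the bookkeeping of the $|\alpha|^2$-correction through the iteration: the contribution $C_1 p_k |\alpha|^2\|u\|_{p_k,\varphi}^{p_k}$ at stage $k$ must eventually contribute only $O(|\alpha|^2 t / p_k)$ after the $p_k$-th root, so that the infinite product collapses to a single exponential in $|\alpha|^2 t$ rather than a worse function of $|\alpha|$. This succeeds precisely because $p_k \to \infty$ geometrically, making both $\sum_k p_k^{-1}$ and $\sum_k p_k^{-1}\log p_k$ finite. Apart from this bookkeeping and the previously-established weighted Sobolev inequality, the remaining elements (energy estimate, time cut-off, iteration) are classical.
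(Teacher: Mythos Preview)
Your approach is sound and essentially a genuine alternative to the paper's proof. The paper does not use time slices at all: after deriving the same energy inequality (arriving at $-\frac{d}{dt}\langle v,v\rangle_\varphi \geq a[v,v] - Cp^2|\alpha|^2\langle v,v\rangle_\varphi$ with $v=u^{p/2}$), it interpolates $\|v\|_{2,\varphi}$ between $\|v\|_{1,\varphi}$ and $\|v\|_{2\ell,\varphi}$ to obtain a Nash-type differential inequality, which becomes a linear ODE in $w_p:=\|v\|_{2,\varphi}^{-4/dN}$. This is integrated explicitly on $[0,t]$; since $\|v\|_{1,\varphi}=\|u\|_{p/2,\varphi}^{p/2}$, each step halves $p$, and the iteration runs over $p=2^k$. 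Your route (apply Sobolev pointwise in time, slice $[0,t]$ dyadically, iterate $p_k=2\ell^k$) is the Davies--Moser alternative; both are standard, and yours avoids the somewhat delicate estimate of $\int_0^t e^{\beta p^2 r}r^q\,dr$ that the paper carries out.

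One correction, however: your differential inequality should read $C_1 p^2|\alpha|^2$, not $C_1 p|\alpha|^2$. Absorbing $2(p-2)\langle w\,\alpha\cdot\nabla w\rangle_\varphi$ into $\frac{2(p-1)}{p}a[w,w]$ via Young leaves a remainder $\frac{p(p-2)^2}{2(p-1)}|\alpha|^2\|w\|_{2,\varphi}^2\sim\frac{p^2}{2}|\alpha|^2\|w\|_{2,\varphi}^2$, and the explicit $p|\alpha|^2$ term is lower order. (The paper's \eqref{ineq_ap} has the same $p^2$.) This propagates: your recursion factor becomes $(1+p_k|\alpha|^2 t)$ rather than $(1+|\alpha|^2 t)$, and then your bound $\log(1+|\alpha|^2 t)\leq|\alpha|^2 t$ is not enough, since $\sum_k p_k^{-1}\cdot p_k|\alpha|^2 t$ diverges. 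The fix is elementary: use $1+p_k y\leq p_k(1+y)$ to get $\log(1+p_k|\alpha|^2 t)\leq\log p_k+\log(1+|\alpha|^2 t)$; since $\sum_k p_k^{-1}\log p_k<\infty$ as well, the sum is still $\leq C+\frac{dN}{4}|\alpha|^2 t$ and the exponential factor survives. Also note that your base case needs $M_0:=\sup_{[0,t]}\|u_s\|_{2,\varphi}$ rather than $\|u_0\|_{2,\varphi}=\|f\|_{2,\varphi}$; the $p=2$ energy inequality (where the cross term vanishes) gives $M_0\leq e^{C|\alpha|^2 t}\|f\|_{2,\varphi}$, which is harmless.
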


\begin{proof} 
Define function $u=u_\alpha$ by $$u(t):=e^{-t(\omega + A_\alpha)}f, \quad f \in C_c^\infty (\mathbb{R}^{dN}).$$
Our goal thus is to establish bound
\begin{equation}\label{sbd}
\|u(t)\|_\infty \leq  Ct^{-\frac{dN}{4}}e^{C |\alpha|^2t} \|f\|_{L^2_{\hat{\varphi}}}, \quad t>0,
\end{equation}
for $\omega:=\frac{C_2}{s}$ (this will be needed to compensated for the last term in the Sobolev embedding Lemma \ref{sob_emb}). Without loss of generality, $f \geq 0$, so $u(t) \geq 0$ for all $t>0$. 
Let $p \geq 2$. We multiply in $L^2_{\hat{\varphi}}$ the parabolic equation for $u$ by $u^{p-1}$, obtaining
\begin{equation}
\label{A_first_eq}
\omega \langle u^{\frac{p}{2}},u^{\frac{p}{2}}\rangle_{\hat{\varphi}} + \frac{1}{p}\partial_t \langle u^{\frac{p}{2}},u^{\frac{p}{2}}\rangle_{\hat{\varphi}} + \langle A_\alpha u,u^{p-1}\rangle_{\hat{\varphi}} =0.
\end{equation}
Now, setting for brevity $v:=u^{\frac{p}{2}}$ and integrating by parts, we evaluate
\begin{align}
\langle A_\alpha u,u^{p-1}\rangle_{\hat{\varphi}} & \equiv a[\phi_{-\alpha }u,\phi_\alpha u^{p-1}] \notag \\
& =\frac{4(p-1)}{p^2}a[v,v]+\frac{4}{p}\langle \alpha \cdot \nabla v,v\rangle_{\hat{\varphi}} 
  + \langle \alpha \cdot \frac{\nabla \hat{\varphi}}{\hat{\varphi}} v,v \rangle_{\hat{\varphi}} - |\alpha|^2\langle v,v\rangle_{\hat{\varphi}}, \label{form_exp}
\end{align}
so the previous equation becomes
\begin{align*}
\omega \langle v,v\rangle_{\hat{\varphi}} + \frac{1}{p}\partial_t \langle v,v\rangle_{\hat{\varphi}} +    \frac{4(p-1)}{p^2}a[v,v]= -\frac{4}{p}\langle \alpha \cdot \nabla v,v\rangle_{\varphi} 
  - \langle \alpha \cdot \frac{\nabla \hat{\varphi}}{\hat{\varphi}} v,v \rangle_{\hat{\varphi}} + |\alpha|^2\langle v,v\rangle_{\hat{\varphi}}.
\end{align*}
We estimate the terms $-\frac{4}{p}\langle \alpha \cdot \nabla v,v\rangle_{\hat{\varphi}}$ and $-\langle \alpha \cdot \frac{\nabla \hat{\varphi}}{\hat{\varphi}} v,v \rangle_{\hat{\varphi}}$ in the right-hand side as follows:
$$
- \frac{4}{p}\langle \alpha \cdot \nabla v,v\rangle_{\hat{\varphi}} \le   \frac{4}{p} \left\vert \langle \alpha \cdot \nabla v,v\rangle_{\hat{\varphi}}\right\vert \le \frac{4\gamma}{p^2}a[v,v] + \frac{|\alpha|^2}{\gamma} \langle v,v\rangle_{\hat{\varphi}}, $$
and, we integrate by parts we get
$$
 -\langle \alpha \cdot \frac{\nabla \hat{\varphi}}{\hat{\varphi}} v,v \rangle_{\hat{\varphi}}  = 2 \langle \alpha \cdot \nabla v,v \rangle_{\hat{\varphi}}  \le \frac{\gamma}{p} a[v,v] + \frac{|\alpha|^2 p}{\gamma}\langle v,v\rangle_{\hat{\varphi}},               
$$
Thus, we obtain inequality
\begin{align*}
-\frac{1}{p} \partial_t \langle v,v\rangle_{\hat{\varphi}} \geq \omega \langle v,v\rangle_{\hat{\varphi}}   + \left( \frac{4(p-1)}{p^2} - \frac{4\gamma}{p^2}- \frac{\gamma}{p} \right)a[v,v] 
 - \left(1 + \frac{1}{\gamma} +   \frac{p}{\gamma} \right)|\alpha|^2 \langle v,v\rangle_{\hat{\varphi}},
\end{align*}
so
\begin{align*}
- \partial_t \langle v,v\rangle_{\hat{\varphi}} \geq p \omega \langle v,v\rangle_{\hat{\varphi}}   + p  \left( \frac{4(p-1)}{p^2} - \frac{4\gamma}{p^2}-\frac{\gamma}{p} \right)a[v,v] - C p^2 \vert \alpha\vert^2 \langle v,v\rangle_{\hat{\varphi}}.
\end{align*}
Fix $\gamma $ such that $ p  \left( \frac{4(p-1)}{p^2} - \frac{4\gamma}{p^2}-\frac{\gamma}{p} \right) >1$, $p\ge 2$. Then   
$$
- \partial_t \langle v,v\rangle_{\hat{\varphi}} \geq a[v,v] +  \omega \langle v,v\rangle_{\hat{\varphi}}  - C p^2 \vert \alpha\vert^2 \langle v,v\rangle_{\hat{\varphi}}.
$$
Using Lemma \ref{sob_emb}, i.e.\,the weighted Sobolev embedding for the quadratic form $a$, 
we obtain
\begin{equation}
\label{L2 bound}
- \partial_t  \Vert v \Vert_{2, \hat{\varphi}}^{2} \ge C_1 \Vert v \Vert_{\frac{2dN}{dN-2}, \hat{\varphi}}^{2} - \left( \frac{C_2}{s} -  \omega  + C p^2 \vert \alpha\vert^2\right)  \Vert v \Vert_{2, \hat{\varphi}}^{2}.
\end{equation}
Using interpolation inequality
$$
\|u\|_{2, \hat{\varphi}} \leq \|u\|_{1,  \hat{\varphi}}^{\mu}\|u\|_{\frac{2dN}{dN-2}, \hat{\varphi}}^{1-\mu},\qquad \mu=\frac{2}{dN+2},
$$
we obtain
\begin{equation}\label{energy_eque}
- \partial_t  \Vert v \Vert_{2, \hat{\varphi}}^{2} \ge C_1 \Vert v \Vert_{2, \hat{\varphi}}^{2+\frac{4}{dN}}  \Vert v \Vert_{1, \hat{\varphi}}^{-\frac{4}{dN}}   - \left( \frac{C_2}{s} -  \omega  + C p^2 \vert \alpha\vert^2\right)  \Vert v \Vert_{2, \hat{\varphi}}^{2},
\end{equation}
where, recall, $\omega=\frac{C_2}{s}$.
From here, one can run the standard Moser's iterations (see, e.g., \cite[Sect.\,4.2]{KiS_MAAN}) to obtain the desired bound. To make the paper self-contained, we include the details of the argument below.

So, \eqref{energy_eque} gives us
$$
\frac{d}{dt}\|v\|_{2,\hat{\varphi}}^{-\frac{4}{dN}} \geq \frac{2}{dN}C_1\|v\|_{1,\hat{\varphi}}^{-\frac{4}{dN}} - \frac{2}{dN} C p^2 \vert \alpha\vert^2   \Vert v \Vert_{2, \hat{\varphi}}^{2}.
$$
Let $p \geq 4$. The previous inequality is linear in $w_p:=\|v\|_{2,\hat{\varphi}}^{-\frac{4}{dN}}$ $(=\|u\|_{p,\hat{\varphi}}^{-\frac{1}{p}\frac{2}{dN}})$. Therefore, setting 
$$c :=\frac{2}{dN}C_1, \quad \beta:=\frac{2}{dN}C |\alpha|^2, \quad
\mu_p(t):= \frac{2}{dN}C p^2 |\alpha|^2 t,
$$
we have
\begin{align*}
w_p(t) & \geq c e^{-\mu_p(t)} \int_0^t e^{\mu_p(r)} w_\frac{p}{2} (r)d r \\
& \geq  c e^{-\mu_p(t)} \int_0^t e^{\mu_p(r)} r^q d r \;V_\frac{p}{2}(t),
\end{align*}
where $ q = \frac{p}{2} - 2$ and 
\begin{align*}
V_\frac{p}{2}(t):= & \inf[r^{-q}w_\frac{p}{2}(r) \mid  0 \leq r \leq t ] \\
= & \bigg \{ \sup \bigg[r^\frac{q dN}{2 p} \|u(r)\|_{p/2,\hat{\varphi}} \mid 0\leq r \leq t\bigg] \bigg \}^{-\frac{2p}{dN}}.
\end{align*}
Since $e^{-\mu_p(t)} \int_0^t e^{\mu_p(r)} r^q d r \geq e^{- \beta p^2 t} \int_0^t e^{\beta  p^2 r}r^q d r$ and
\begin{align*}
\int_0^t e^{\beta  p^2 r}r^q d r & = \bigg(\frac{t}{\beta p^2} \bigg)^{q+1} \int_0^{\beta  p^2} e^{t r'} (r')^q d r' \\
& \geq \bigg(\frac{t}{\beta p^2} \bigg)^{q+1} e^{\beta (p^2 -1)t} \int_{\beta p^2 (1-p^{-2})}^{\beta p^2} r^q d r \\
& = t^\frac{p-2}{2} \frac{2}{p-2} \big[1-(1-p^{-2})^{\frac{p-2}{2}} \big] e^{\beta (p^2 -1)t} \qquad (\text{use }q+1=\frac{p-2}{2}) \\
& \geq K p^{-2} t^\frac{p-2}{2} e^{\beta (p^2 -1)t},
\end{align*}
where $K:= 2 \inf \big \{ p \big[1-(1-p^{-2})^{\frac{p-2}{2}} \big] \mid p \geq 4 \big \} > 0,$ we obtain
\[
w_p(t) \geq C_1 K p^{-2} e^{-\beta t} t^\frac{p-2}{2} V_\frac{p}{2}(t),
\]
or
$$
t^{-\frac{p-2}{2}}w_p(t) \geq C_1 K p^{-2} e^{-\beta t} V_\frac{p}{2}(t).
$$
Setting $$W_p(t) := \sup \big[ r^\frac{dN(p-2)}{4 p} \|u(r) \|_{p,\hat{\varphi}} \mid 0 \leq r \leq t \big] = V_p^{-\frac{dN}{2p}},$$
we thus obtain
\[
W_p(t) \leq (C_1 K)^{-\frac{dN}{2 p}} p^\frac{dN}{p} e^{\frac{C |\alpha|^2}{p} t} W_{p/2}(t), \quad p = 2^k, \;k= 2, 3, \dots
\]
Iterating this inequality, starting with $ k = 2,$ yields 
$$t^\frac{dN}{4} \|u (t)\|_\infty \leq C_2  e^{C|\alpha|^2 t} W_2 (t),$$ 
where we have used, of course, $\lim_{p \rightarrow \infty}\|u(t)\|_{p,\hat{\varphi}}=\|u(t)\|_\infty$. 
Finally,  an immediate consequence of \eqref{L2 bound}  after we have fixed $\gamma$,
$$
\frac{d}{dt} \|v\|_{2,\hat{\varphi}} \leq \frac{C}{2} p^2 \vert \alpha\vert^2 \|v\|_{2,\hat{\varphi}},
$$
 yields $\|v(t)\|_{2,\hat{\varphi}} \leq e^{Cp^2|\alpha|^2 t}\|f\|_{2,\hat{\varphi}}^2$, so we obtain
 $$
 t^\frac{dN}{4} \|u (t)\|_\infty \leq C_2  e^{C_3 |\alpha|^2 t} \|f\|_{2,\hat{\varphi}} \quad \Rightarrow \quad \eqref{sbd}.
 $$

\end{proof}

From Lemma \ref{moser_lem} and the dual estimate $\|e^{-tA_\alpha}\|_{L^1_{\hat{\varphi}}  \rightarrow L^2_{\hat{\varphi}}} \leq Ct^{-\frac{dN}{4}} e^{\frac{C_2 }{s} t }e^{C_3|\alpha|^2t}$ (use that $(A_\alpha)^\ast=A_{-\alpha}$ in the weighted space) we obtain, using the semigroup property,
$$
\|e^{-tA_\alpha}\|_{L^1_{\hat{\varphi}}  \rightarrow L^\infty} \leq C_2^2 t^{-\frac{dN}{2}}  e^{2\frac{C_2 }{s} t } e^{2C_3|\alpha|^2t},
$$
Therefore, recalling the definition of weights $\phi_\alpha$, $\phi_{-\alpha}$ in $e^{-tA_\alpha}$, the integral kernel $q(t, x, y)$ of $e^{-t A}(x,y)$ satisfies 
\begin{equation}\label{q- Gaussian upper bound}
q(t,x,y) \le   C  t^{-\frac{dN}{2}}  e^{2 \frac{C_2}{s}t} e^{\alpha\cdot (x-y)+ 2C_3\vert \alpha \vert^2 t}, \quad t>0,\,  x,y \in \mathbb R^{dN}.
\end{equation}
Selecting $\alpha = \frac{y-x}{2 C_3 t}$, we get 
\begin{equation}\label{q-esti}
q(t,x,y) \le   C   t^{-\frac{dN}{2}} e^{2 \frac{C_2}{s}t}   e^{-\frac{\vert y-x \vert^2}{2C_3 t}},
\end{equation}
This gives us the sought upper Gaussian bound on $q (= q_\varepsilon)$. Now, we trace back the chain of equivalences \eqref{equiv}, and select $s= t$ in the resulting upper bound on $k_{\varepsilon}(t,x,y)$. This ends the proof of assertion (\textit{i}).

\medskip

(\textit{ii}) The construction of the semigroup is the content of Proposition \ref{prop_semigroup} (Appendix \ref{semigroup_app}). The fact that we have a semigroup of integral operators follows from the Dunford-Pettis theorem, via the $L^r \rightarrow L^q$ embedding property for $e^{-tL}$. The latter is proved e.g.\,by repeating the proof of \cite[Theorem 4.2]{KiS-theory} supplemented with Proposition \ref{b_est1}), i.e.\,a realization of Nash's classical argument. Alternatively, we can simply use the a priori upper bound established in (\textit{i}).

This a priori upper bound yields a posteriori upper bound on the integral kernel of  $e^{-tL}$ via the Lebesgue differentiation theorem. \hfill \qed

\bigskip

\appendix

\section{Proof of Lemma \ref{comp_lem}}
\label{proof_comp_lem}

Step 1: At this step we compare $-{\rm div\,}\frac{\nabla \hat{\varphi}}{\hat{\varphi}}$ to $-{\rm div\,} \frac{\nabla \psi} {\psi}$, i.e.
we first prove a preliminary variant of \eqref{phi_calc}:
\begin{equation}
\label{prelim_phi_calc}
-{\rm div\,}\frac{\nabla \hat{\varphi}}{\hat{\varphi}} \leq -{\rm div\,} \frac{\nabla \psi} {\psi} + \frac{b_0}{s}.
\end{equation}

The function $\psi=\psi_{s,\varepsilon}$, defined before Theorem \ref{thm2}, is easier to do calculations with. Once the previous inequality is established, we will estimate $-{\rm div\,} \frac{\nabla \psi} {\psi}$ at Step 2. Note that $s$ gets cancelled out in $-{\rm div\,} \frac{\nabla \psi} {\psi}$.

Since the definition of $\psi$ uses different regularization of the Euclidean norm depending on whether $d=3$ or $d \geq 4$, we consider these cases separately:

(a) Case $d\geq 4$. Recalling that $|x^i-x^j|_{\varepsilon}^2 := |x^i-x^j|^2+\varepsilon$, we have  

$$
-{\rm div\,}\frac{\nabla \hat{\varphi}}{\hat{\varphi}} = -\Delta \log \hat{\varphi}  = -2\sum_{1\le i<j\le N}
\Delta_{x^i} \log \eta\left( \frac{|x^i-x^j|_{\varepsilon}}{\sqrt{s}} \right),
$$
and 
$$
-{\rm div\,} \frac{\nabla \psi} {\psi} = - \Delta \log \psi. 
$$
For brevity, define
$$
G(r):=\log \eta(r)+\frac{\sqrt{\kappa}}{N}\frac{d-2}{2}\log r,
\qquad r>0,
$$
$G$ is identically zero in the singular region $r<1$, while on the transition region $1\le r\le 2$ it is smooth and all its derivatives $G'$, $G''$ are bounded.

For every pair $1\leq i<j\leq N$,
$$
\log \eta\left(\frac{|x^i-x^j|_\varepsilon}{\sqrt{s}}\right) = -\frac{\sqrt{\kappa}}{N} \frac{d-2}{2}\log |x^i-x^j|_\varepsilon + \frac{1}{2} \frac{\sqrt{\kappa}}{N} \frac{d-2}{2}\log s + G\left(\frac{|x^i-x^j|_\varepsilon}{\sqrt{s}}\right).
$$
Summing over all pairs gives
\begin{align*}
\log \hat{\varphi}_{} & = -\frac{\sqrt{\kappa}}{N} \frac{d-2}{2}\sum_{1\leq i<j\leq N}\log |x^i-x^j|_\varepsilon + \frac{1}{2}\frac{\sqrt{\kappa}}{N} \frac{d-2}{2} \frac{N(N-1)}{2}  \log s +
\sum_{1\leq i<j\leq N} G\left(\frac{|x^i-x^j|_\varepsilon}{\sqrt{s}}\right), \\
& = \log \psi + \frac{1}{2}\frac{\sqrt{\kappa}}{N} \frac{d-2}{2} \frac{N(N-1)}{2}  \log s +
\sum_{1\leq i<j\leq N} G\left(\frac{|x^i-x^j|_\varepsilon}{\sqrt{s}}\right). 
\end{align*}
Since the term involving $\log s$ is independent of $x$, we obtain 
\begin{equation}\label{err_G}
-\Delta \log \hat{\varphi}_{} = -\Delta \log \psi  - \Delta R_s(x),
\end{equation}
where
$$
R_s(x) :=  \sum_{1\leq i<j\leq N} G\left(\frac{|x^i-x^j|_\varepsilon}{\sqrt{s}}\right), 
$$
It remains to estimate the last term in \eqref{err_G}. We claim that there exists a constant $C>0$, independent of $\varepsilon$, such that for every $x \in\mathbb{R}^{dN}$
\begin{equation}\label{estim_rest}
-\Delta R_s(x)\leq \frac{C}{s}  \sum_{1\leq i<j\leq N} \mathbf 1_{\{ \sqrt{s}\leq |x^i-x^j|_\varepsilon\leq 2\sqrt{s} \}}.
\end{equation}
Indeed, fix a pair $1\le i<j \le N$ and set 
$$z=x^i-x^j$$ 
Since $ G\left(\frac{|x^i-x^j|_\varepsilon}{\sqrt{s}}\right)$ depends only on the difference $z$, only the variables $x^i$ and $x^j$ contribute to the full Laplacian $-\Delta$ on $\mathbb{R}^{dN}$. So, we have 
$$
\Delta_{x^i}G\left(\frac{|x^i-x^j|_\varepsilon}{\sqrt{s}}\right) = \Delta_z G\left(\frac{|z|_\varepsilon}{\sqrt{s}}\right), \quad \text{and} \quad \Delta_{x^j}G\left(\frac{|x^i-x^j|_\varepsilon}{\sqrt{s}}\right)
=
\Delta_z G\left(\frac{|z|_\varepsilon}{\sqrt{s}}\right).
$$
Thus
$$
\Delta G\left(\frac{|x^i-x^j|_\varepsilon}{\sqrt{s}}\right) = 2\Delta_z G\left(\frac{|z|_\varepsilon}{\sqrt{s}}\right), 
$$
here $\Delta_z$ denotes the Laplacian on $\mathbb R^d$. Now, summing over all pairs $i<j$, we obtain
$$
-\Delta R_s(x) = -2\sum_{1\leq i<j\leq N} \Delta_z G\left(\frac{|z|_\varepsilon}{\sqrt{s}}\right) \bigg|_{z=x^i-x^j}.
$$
From the definition of $\eta$ , we have : 

1. If $|z|_\varepsilon<\sqrt{s}$, then $ \eta(r)=r^{-\frac{\sqrt{\kappa}}{N}\frac{d-2}{2} }$. Hence $G(r)= 0$ and $ \Delta_z G\left(\frac{|z|_\varepsilon}{\sqrt{s}}\right)= 0$. 

2. If $ |z|_\varepsilon>2\sqrt{s}$, then $\eta\left(\frac{|z|_\varepsilon}{\sqrt{s}}\right)=e^{\frac{\nu}{N}} 2^{-2\frac{\nu}{N}}$. So 
$$G\left(\frac{|z|_\varepsilon}{\sqrt{s}}\right) = \log (e^{\frac{\nu}{N}} 2^{-2\frac{\nu}{N}}) + \frac{\sqrt{\kappa}}{N}\frac{d-2}{2}  \log |z|_\varepsilon - \frac{1}{2}\frac{\sqrt{\kappa}}{N}\frac{d-2}{2}  \log s.$$
Hence, 
$$
\Delta_z G\left(\frac{|z|_\varepsilon}{\sqrt{s}}\right) = \frac{\sqrt{\kappa}}{N}\frac{d-2}{2}  \Delta_z\log |z|_\varepsilon = \frac{\sqrt{\kappa}}{N}\frac{d-2}{2}  \frac{(d-2)|z|^2+d\varepsilon} {\left(|z|^2+\varepsilon \right)^2}.
$$
Since $d\geq 4$, we have 
$$ -2\Delta_z G\left(\frac{|z|_\varepsilon}{\sqrt{s}}\right) = -2\frac{\sqrt{\kappa}}{N}\frac{d-2}{2}  \frac{(d-2)|z|^2+d\varepsilon} {\left(|z|^2+\varepsilon \right)^2} \le 0.$$ 

3. If $\sqrt{s}\leq |z|_\varepsilon\leq 2\sqrt{s}$, then on this interval $G\in C^2([1,2])$,  so its first and second derivatives are bounded. 
Put
$$
\zeta(z):=\frac{|z|_\varepsilon}{\sqrt{s}}.
$$
Moreover, we have $ |\nabla_z   \zeta  |^2\leq \frac{1}{s}$ and  $|\Delta_z \zeta| \leq \frac{C }{\sqrt{s}\,|z|_\varepsilon}.$
Since $|z|_\varepsilon\geq \sqrt{s}$, this yields $ |\Delta_z \zeta| \leq \frac{C}{s}$. Then, we get 
\begin{equation}\label{chain_rule}
\Delta_z G(\zeta) = G''|\nabla_z \zeta|^2 + G'\Delta_z \zeta.
\end{equation}
Therefore, 
$$
\left| \Delta_z G\left(\frac{|z|_\varepsilon}{\sqrt{s}}\right) \right| \leq \frac{C}{s},
$$
where $C$ is independent of $\varepsilon$. Combining the estimates in the three regions, we obtain the pointwise bound
$$
-2\Delta_z G\left(\frac{|z|_\varepsilon}{\sqrt{s}}\right) \leq \frac{C}{s} \mathbf 1_{\{\sqrt{s}\leq |z|_\varepsilon\leq 2\sqrt{s}\}}.
$$
Therefore,
$$
-\Delta R_s(x) \leq \frac{C}{s} \sum_{1\leq i<j\leq N} \mathbf 1_{\{\sqrt{s}\leq |x^i-x^j|_\varepsilon\leq 2\sqrt{s}\}}.
$$
Thus
$$
-\Delta \log \hat{\varphi} \leq -\Delta\log\psi + \frac{b_0(x)}{s},
$$
where
$$
b_0(x) := C \sum_{1\leq i<j\leq N} \mathbf 1_{\{\sqrt{s}\leq |x^i-x^j|_\varepsilon\leq 2\sqrt{s}\}}.
$$

(b) Case $d= 3$. Recalling that $|x^i - x^j|_\varepsilon := |x^i - x^j|+\varepsilon$, and set $ z=x^i-x^j$. The proof follows the same strategy as above. In detail,  
first, assume that $ |z|_{[\varepsilon]}<\sqrt{s}$. Then, by the definition of $ \eta$, $ \eta(r)=r^{-\frac{\sqrt{\kappa}}{N}\frac{d-2}{2} }$. Therefore $G\left(\frac{|z|_\varepsilon}{\sqrt{s}}\right)= 0$, so $ \Delta_z G\left(\frac{|z|_\varepsilon}{\sqrt{s}}\right)= 0$. Next, assume that $ |z|_{[\varepsilon]}>2\sqrt{s}$.
In this region, $ \eta\left(\frac{|z|_{[\varepsilon]}}{\sqrt{s}}\right)=e^{\frac{\nu}{N}} 2^{-2\frac{\nu}{N}}$, and hence 
$$
G\left(\frac{|z|_\varepsilon}{\sqrt{s}}\right) = \log (e^{\frac{\nu}{N}} 2^{-2\frac{\nu}{N}}) + \frac{\sqrt{\kappa}}{N}\frac{d-2}{2}  \log \left( |z|+\varepsilon \right) - \frac{1}{2}\frac{\sqrt{\kappa}}{N}\frac{d-2}{2}  \log s.
$$
It follows that, for $z\neq 0$,
$$
\Delta_z G\left(\frac{|z|_\varepsilon}{\sqrt{s}}\right) = \frac{\sqrt{\kappa}}{N}\frac{d-2}{2}  \Delta_z \log(|z|+\varepsilon),
$$
we have  
\begin{align*}
\Delta_z \log(|z|+\varepsilon) & = -\frac{1}{(|z|+\varepsilon)^2} + \frac{d-1}{|z|(|z|+\varepsilon)} \\
&= \frac{(d-2)|z|+(d-1)\varepsilon} {|z|(|z|+\varepsilon)^2}.
\end{align*}
Consequently,
\begin{align*}
-2 \Delta_z G\left(\frac{|z|_\varepsilon}{\sqrt{s}}\right) & = -2 \frac{\sqrt{\kappa}}{N}\frac{d-2}{2} \frac{(d-2)|z|+(d-1)\varepsilon}
{|z|(|z|+\varepsilon)^2} \le 0.
\end{align*}
It remains to consider the transition region $\sqrt{s}\leq |z|_{[\varepsilon]}\leq2\sqrt{s}$. 
Put
$$
\zeta(z):=\frac{|z|_{[\varepsilon]}}{\sqrt{s}} = \frac{|z|+\varepsilon}{\sqrt{s}}.
$$
Then $1\leq\zeta\leq2$. On this interval, $G\in C^2([1,2])$, and $G'$, $G''$ are bounded.

For $z\neq 0$, we have $ \nabla_z \zeta = \frac{1}{\sqrt{s}}\frac{z}{|z|}$, and therefore $ |\nabla_z \zeta|^2=\frac{1}{s}$. Furthermore,
$$
\Delta_z \zeta = \frac{1}{\sqrt{s}}\Delta_z |z| = \frac{d-1}{\sqrt{s}\,|z|} > 0.
$$
We obtain
$$ -2\Delta_zG(\zeta) = -2G''(\zeta)|\nabla_z\zeta|^2 -2G'(\zeta)\Delta_z\zeta.
$$
The second term in the RHS gives,  
\begin{align*}
  -2G'(\zeta)\Delta_z\zeta  &= -\frac{d-1}{\sqrt{s}|z|} \left( \frac{\eta'(\zeta)}{\eta(\zeta)} +\frac{\sqrt{\kappa}}{N}\frac{d-2}{2}\frac{1}{\zeta}  \right), \\
  & \; (\text{use \eqref{lem2_ineq}}), \\
  & \le 0. 
   \end{align*}
Therefore, since $G''$ is bounded on $[1,2]$, we get
$$
-2\Delta_z G(\zeta) \le -2G''(\zeta)|\nabla_z \zeta|^2 \le \frac{C}{s}
$$
Thus we conclude \eqref{estim_rest}. 

This ends the proof of \eqref{prelim_phi_calc}.

\medskip

Step 2.~As explained in the beginning of Step 1, armed with \eqref{prelim_phi_calc}, it remains to estimate $-\Delta\log\psi=-\sum_{i=1}^N \nabla_i (\frac{\nabla \psi}{\psi})_i$ to conclude \eqref{phi_calc}.
In the case $d \geq 4$, we have
$$
-(\frac{\nabla \psi}{\psi})_i=\sqrt{\kappa}\frac{d-2}{2}\frac{1}{N}\sum_{j=1, j \neq i}^N\frac{x^i-x^j}{|x^i-x^j|^2_\varepsilon}, 
$$
so
\begin{align*}
\nabla_i (\frac{\nabla \psi}{\psi})_i=\sqrt{\kappa}\frac{d-2}{2}\frac{1}{N}\sum_{j=1, j \neq i}^N\nabla _i \biggl(\frac{x^i-x^j}{|x^i-x^j|^2_\varepsilon}\biggr)=\sqrt{\kappa}\frac{d-2}{2}\frac{1}{N}\sum_{j=1, j \neq i}^N\biggl( d\frac{1}{|x^i-x^j|_\varepsilon^2} - 2 \frac{|x^i-x^j|^2}{|x^i-x^j|_\varepsilon^4}\biggr).
\end{align*}
It remains to apply inequality $d\frac{1}{|x|_\varepsilon^2} - 2 \frac{|x|^2}{|x|_\varepsilon^4} \leq \frac{d-2}{|x|^2}$ (using $d \geq 4$) to obtain \eqref{phi_calc}. (Note that in \eqref{phi_calc} we sum only above the diagonal, hence the extra factor $2$.)

In the case $d=3$, we use a different regularization of the Euclidean norm in the definition of $\psi=\psi_{s,\varepsilon}$. The next argument also works in dimensions greater than $3$, so we will continue writing $d$ since this makes the argument easier to follow. We have (taking into account that $s$ gets cancelled out in $\frac{\nabla \psi}{\psi}$)
\begin{align*}
-\nabla_i (\frac{\nabla \psi}{\psi})_i=\sqrt{\kappa}\frac{d-2}{2}\frac{1}{N}\sum_{j=1, j \neq i}^N \Delta_{i}\log |x^i-x^j|_{[\varepsilon]}.
\end{align*}
The Laplacian $\Delta_i$  of a radial function $f(r)$ ($r=|x^i-x^j|$, we can make a change of variables to put $x^j$ at the origin) is $f''+\frac{d-1}{r}f'$, so, since for $f(r)=\log(r+\varepsilon)$ we have $f'(r)=\frac{1}{r+\varepsilon}$, $f''(r)=-\frac{1}{(r+\varepsilon)^2}$,
\begin{align*}
\Delta_{i}\log |x^i-x^j|_{[\varepsilon]} & = - \frac{1}{\big(|x^i-x^j|+\varepsilon\big)^2} + \frac{d-1}{|x^i-x^j|\big(|x^i-x^j|+\varepsilon\big)} \\
& (\text{use $-\frac{1}{(r+\varepsilon)^2} + \frac{d-1}{r(r+\varepsilon)} \leq \frac{(d-2)}{r^2}$}) \\
& \leq (d-2)\frac{1}{|x^i-x^j|^2},
\end{align*}
hence
$$
-\nabla_i (\frac{\nabla \psi}{\psi})_i \leq \sqrt{\kappa}\frac{(d-2)^2}{2}\frac{1}{N}\sum_{j=1, j \neq i}^N \frac{1}{|x^i-x^j|^2},
$$
i.e.\,we have proved \eqref{phi_calc}. \hfill \qed

\section{Alternative approach to estimating heat kernel of operator $L$}

\label{alt_app}

One limitation of Theorem \ref{thm1} is that it deals with operator $\Lambda_\varepsilon$ and not with operator $$L_\varepsilon=-\Delta +  \frac{d-2}{2}\frac{\sqrt{\kappa}}{N} \sum_{i=1}^N \sum_{j=1, j \neq i}^N \frac{x^i-x^j}{|x^i-x^j|_\varepsilon^2} \cdot \nabla_{x_i},$$
where $|x^i-x^j|_\varepsilon:= \sqrt{|x^i-x^j|^2+\varepsilon}$. 
To handle operator $L_\varepsilon$, we can use Moser's iterations (Theorem \ref{thm2}) or, alternatively, we can use the desingularization procedure of Theorem A. This is what we do below.
Now, verifying condition \eqref{S4} of Theorem A will be more interesting. In what follows, $d \geq 3$.

We will simplify the problem somewhat and work in the domain in $\mathbb R^{dN}$
$$
D_R:=\bigcap_{1 \leq i<j\leq N}\{x \in \mathbb R^{dN} \mid |x^i-x^j|<R\}
$$ 
for a fixed large $R>0$. When the trajectory $X_t=X_t^\varepsilon=(X_t^{i,\varepsilon})_{i=1}^N$ of the smoothed out ``higher-dimensional Keller-Segel system'' 
\begin{equation}
\label{ks5}
dX_t^{i} = -  \frac{\nu}{N}\sum_{j=1, j \neq i}^N \frac{X_t^i-X_t^j}{|X_t^i-X_t^j|_\varepsilon^2}dt + \sqrt{2}dB_t^i, \quad i=1,\dots,N,
\end{equation}
hits the boundary of $D_R$, i.e.\,the distance between at least one pair of particles becomes equal to $R$, we stop. Let $X_{t}^{0,\varepsilon}$ denote the corresponding stopped process. This corresponds to considering the initial-boundary value problem in $]0,T] \times D_R$
$$
\big(\partial_t -\Delta + \frac{\nu}{N}\sum_{i=1}^N \sum_{j=1, j \neq i}^N \frac{x^i-x^j}{|x^i-x^j|_\varepsilon^2} \cdot \nabla_{x_i}\big)u=0, \quad u|_{\partial D_R}=0, \quad u|_{t=0}=f,
$$
where $f$ has support in $D_R$. That is,
$$
\mathbb E_{X_{t=0}^{0,\varepsilon}=x}[f(X_{t}^{0,\varepsilon})]=\int_{D_R}k_\varepsilon(t,x,y)f(y)dy,
$$
where $k_\varepsilon$ denotes the heat kernel that corresponds to $L^0_\varepsilon$, i.e.\,the operator 
$$-\Delta + \frac{\nu}{N}\sum_{i=1}^N \sum_{j=1, j \neq i}^N \frac{x^i-x^j}{|x^i-x^j|_\varepsilon^2} \cdot \nabla_{x_i},$$ 
with the Dirichlet boundary conditions on $\partial D_R$.

{
\renewcommand{\thetheorem}{3}
\begin{theorem}
\label{thm3}
Let $d \geq 3$, $N \geq 2$. Assume that $$\nu<2\frac{N}{N-1}\frac{\Gamma(\frac{d}{4}+\frac{1}{2})}{\Gamma(\frac{d}{4}-\frac{1}{2})}.$$ Then 
$$
k_\varepsilon(t,x,y) \leq C t^{-\frac{dN}{2}} \varphi_\varepsilon(y)
$$
for all $t \in [0,T]$, $x,y \in D_R$. 
\end{theorem}
}

Returning to the earlier notations, in Theorem \ref{thm3} we desingularize operator $-\Delta - \frac{\nabla \psi_\varepsilon}{\psi_\varepsilon}\cdot \nabla$ using the same weight $\varphi_\varepsilon=\psi_\varepsilon+1$. The crucial step in the proof, which uses Theorem A, is the verification of the ``desingularizing bound'' \eqref{S4},
 i.e.\,the verification that
\begin{align*}
(\psi_\varepsilon+1)(-\Delta - \frac{\nabla \psi_\varepsilon}{\psi_\varepsilon} \cdot \nabla)(\psi_\varepsilon+1)^{-1}v  = -\Delta v + \nabla \cdot \big(\frac{\psi_\varepsilon-1}{\psi_\varepsilon(\psi_\varepsilon+1)}(\nabla \psi_\varepsilon) v\big) + \frac{1}{\psi_\varepsilon+1}\bigg({\rm div\,}\frac{\nabla \psi_\varepsilon}{\psi_\varepsilon}\bigg)v
\end{align*}
is the generator of an $L^1$ quasi contraction on $\bar{D}_R$ unformly in $\varepsilon>0$. The difficulty is in dealing with the last term, i.e.\,the potential, which is singular (as $\varepsilon \downarrow 0$), but not very singular. At this step, we appeal to the regularity results for the elliptic equations in \cite{Ki_multi} obtained using De Giorgi's method. De Giorgi's method is a powerful technique, but this approach seems reasonable if one keeps in mind that we use little specifics about the weight $\psi$; the proof is rather abstract and should work for other particle systems.

\begin{proof}[Proof of Theorem \ref{thm3}]

 It is convenient to re-normalize $\nu$ and write $\nu=\sqrt{\kappa}\frac{d-2}{2}$. 
 
 We apply Theorem A to operator $L^0_\varepsilon$, with desingularizing weight $\varphi_\varepsilon=\psi_\varepsilon+1$.
 
 Conditions \eqref{S2} and \eqref{S3} of Theorem A are immediate. 
 
 Let us verify \eqref{S1}. The Dirichlet boundary conditions make the heat kernel pointwise smaller, so
$$
k_\varepsilon(t,x,y) \leq p_\varepsilon(t,x,y) \quad \biggl(\text{i.e. }e^{-tL^0_\varepsilon}(x,y) \leq e^{-tL_\varepsilon}(x,y)\biggr).
$$ 
We have proved already $\|e^{-tL_\varepsilon}\|_{2 \rightarrow \infty} \leq ct^{-\frac{dN}{2}}$, see the proof of Theorem \ref{thm1}, so condition \eqref{S1} for $L^0_\varepsilon$ follows.

 The crucial step here is in verifying condition \eqref{S4} of Theorem A. That is, we need to show that the operator $\varphi_\varepsilon L^0_\varepsilon \varphi_\varepsilon^{-1} $
is the generator of a quasi contraction semigroup in $L^1=L^1(\bar{D}_R)$.
First, note that, on $\mathbb R^{dN}$,
\begin{align*}
\varphi_\varepsilon L_\varepsilon \varphi_\varepsilon^{-1} v & \equiv (\psi_\varepsilon+1)(-\Delta - \frac{\nabla \psi_\varepsilon}{\psi_\varepsilon} \cdot \nabla)(\psi_\varepsilon+1)^{-1}v \\
& = -\Delta v + \nabla \cdot \big(\frac{\psi_\varepsilon-1}{\psi_\varepsilon(\psi_\varepsilon+1)}(\nabla \psi_\varepsilon) v\big) + \frac{1}{\psi_\varepsilon+1}\bigg({\rm div\,}\frac{\nabla \psi_\varepsilon}{\psi_\varepsilon}\bigg)v
\end{align*}
(to see this, it is convenient to put both sides in the Kolmogorov backward form plus a potential, and then verify that the results coincide).
If we did not have the potential 
$$
U_\varepsilon:=\frac{1}{\psi_\varepsilon+1}{\rm div\,}\frac{\nabla \psi_\varepsilon}{\psi_\varepsilon} \quad \text{ ($<0$, see below)}
$$ 
in $\varphi_\varepsilon L_\varepsilon \varphi_\varepsilon^{-1}$, then there would be nothing to do: we would conclude immediately that $\varphi_\varepsilon L^0_\varepsilon \varphi_\varepsilon^{-1}$  is the generator of an $L^1(\bar{D}_R)$ quasi contraction. Let us show that although $U_\varepsilon$ is unbounded and negative, it is not very singular \textit{in} $D_R$, and so $\varphi_\varepsilon L^0_\varepsilon \varphi_\varepsilon^{-1}$ is indeed the generator of an $L^1(\bar{D}_R)$ quasi contraction. Crucially, all our quasi contraction estimates must be independent of $\varepsilon$.

It will be convenient to work with the dual operator $M:=(\varphi_\varepsilon L_\varepsilon \varphi_\varepsilon^{-1} )^\ast$ and show that it generates an $L^\infty$ quasi contraction in $D_R$. By the previous calculation,
$$
M=-\Delta - \frac{\psi_\varepsilon-1}{\psi_\varepsilon(\psi_\varepsilon+1)}(\nabla \psi_\varepsilon) \cdot \nabla + U_\varepsilon.
$$
Set $B:=-\Delta - \frac{\psi_\varepsilon-1}{\psi_\varepsilon(\psi_\varepsilon+1)}(\nabla \psi_\varepsilon) \cdot \nabla$. 
By iterating the Duhamel formula
$$
e^{-tM}=e^{-tB}-\int_0^t e^{-(t-s)B}U_\varepsilon e^{-sM}ds, \quad t \in [0,T],
$$
i.e.\,writing the Duhamel series,
and using the fact that $B$ is, clearly, the generator of an $L^\infty$ quasi contraction in $D_R$, it is seen that the sought estimate $\|e^{-tM}\|_{L^\infty({\bar{D}_R}) \rightarrow L^\infty(\bar{D}_R)} \leq C<\infty$, $t \in [0,T]$, will follow once we prove that
$$
\sup_{D_R}\int_0^t e^{-(t-s)B}|U_\varepsilon| ds 
$$
can be made sufficiently small, uniformly in $\varepsilon$ and $t \in [0,h]$, by selecting $h$ sufficiently small; we can then ``upgrade'' $h$ to $T$ using the reproduction property of $e^{-tM}$. Note that, again by the Duhamel formula, the function $u_\varepsilon(t,x):=\int_0^t e^{-(t-s)B}|U_\varepsilon|(x) ds$ solves inhomogeneous initial-boundary value problem in $]0,T] \times D_R$ 
\begin{equation}
\label{prob1}
(\partial_t+B)u_\varepsilon=|U_\varepsilon|,\quad u_\varepsilon|_{\partial D_R}=0, \quad u_\varepsilon|_{t=0}=0,
\end{equation}
 so our goal is to show that $u_\varepsilon$ is bounded (small) on $[0,h] \times D_R$ uniformly in $\varepsilon$. Or, rather, we can work with the elliptic equations after applying the following pointwise inequality on $[0,h] \times D_R$:
\begin{align}
\int_0^t e^{-(t-s)B}|U_\varepsilon| ds & = \int_0^t e^{-s B}|U_\varepsilon| ds \leq e^{ \lambda h}\int_0^t e^{-\lambda s}  e^{-s B}|U_\varepsilon| ds \quad (\text{use $0 \leq t \leq h$}) \notag \\
& \leq e^{\lambda h}\int_0^\infty e^{-\lambda s}  e^{-s B}|U_\varepsilon| ds = e^{\lambda h}(\lambda+B)^{-1}|U_\varepsilon|, \quad \lambda>0, \label{duhamel_3}
\end{align}
where $v_\varepsilon:=(\lambda+B)^{-1}|U_\varepsilon|$ solves the elliptic equation in $D_R$:
\begin{equation}
\label{prob2}
(\lambda+B)v_\varepsilon=|U_\varepsilon|, \quad v_\varepsilon|_{\partial D_R}=0.
\end{equation}
So, our goal is to prove that $\sup_{\varepsilon>0}\|v_\varepsilon\|_{L^\infty(D_R)}$ can be made arbitrarily small by fixing $\lambda$ sufficiently large  (of course, we will be getting larger factors $e^{\lambda h}$ in \eqref{duhamel_3}, but this is where we will have to select $h$ sufficiently small).
To this end, we first note the following:

\begin{enumerate}
\item[--]
In $\mathbb R^{dN}$, the drift in the operator $B$, i.e.\,the vector field $\hat{b}_\varepsilon=- \frac{\psi_\varepsilon-1}{\psi_\varepsilon(\psi_\varepsilon+1)}(\nabla \psi_\varepsilon)$, is form-bounded:
\begin{equation}
\label{hat_b}
\langle |\hat{b}_\varepsilon|^2,f^2 \rangle \leq \frac{\kappa}{2} \frac{N-1}{N}\langle |\nabla f|^2\rangle, \quad f \in W^{1,2}(\mathbb R^{dN}).
\end{equation}
Indeed, since $|\hat{b}_\varepsilon| \leq \frac{|\nabla \psi_\varepsilon|}{\psi_\varepsilon}$, we have 
\begin{align*}
 \frac{|\nabla \psi_\varepsilon|^2}{\psi_\varepsilon^2}  & = \kappa\frac{(d-2)^2}{4}  \sum_{i=1}^N  \biggl|\frac{1}{N}  \sum_{j=1,j \neq i}^N \frac{x^i-x^j}{|x^i-x^j|^2+\varepsilon}\biggr|^{2} \\ 
& \leq  \kappa\frac{(d-2)^2}{4}  \sum_{i=1}^N  \biggl(\frac{1}{N}  \sum_{j=1,j \neq i}^N \frac{1}{|x^i-x^j|}\biggr)^{2}\\
&\leq \kappa\frac{(d-2)^2}{4}  \sum_{i=1}^N \frac{N-1}{N^{2}} \sum_{j=1,j \neq i}^N \frac{1}{|x^i-x^j|^2},
\end{align*}
so, switching to the summation above the diagonal,
$$
\langle f^2, \frac{|\nabla \psi_\varepsilon|^2}{\psi_\varepsilon^2} \rangle \leq \kappa\frac{(d-2)^2}{2}  \frac{N-1}{N^{2}}  \sum_{1 \leq i<j \leq N} \big\langle  \frac{1}{|x^i-x^j|^2},f^2 \big\rangle,
$$
and invoking the many-particle Hardy inequality \eqref{hardy}, thus yields \eqref{hat_b}.   
The value of the form-bound $\frac{\kappa}{2} \frac{N-1}{N}$ of $\hat{b}_\varepsilon$ is important for us (see below).

\medskip

\item[--] Let us estimate potential $U_\varepsilon$ on $D_R$ (this is the step where we will use the structure of $D_R$). 
The following calculation is of course valid on $\mathbb R^{dN}$: 
\begin{align*}
{\rm div\,}\frac{\nabla \psi_\varepsilon}{\psi_\varepsilon} & =-\frac{\nu}{N}\sum_{i=1}^N \nabla_{x^i}\biggl(\sum_{j=1, j \neq i}^N \frac{x^i-x^j}{|x^i-x^j|^2+\varepsilon}\biggr) \\
& =-\frac{\nu}{N}\sum_{i=1}^N \sum_{j=1, j \neq i}^N\biggl( d\frac{1}{|x^i-x^j|_\varepsilon^2} - 2 \frac{|x^i-x^j|^2}{|x^i-x^j|_\varepsilon^4}\biggr) 
\end{align*}
Therefore, using a rough inequality $0 \leq d\frac{1}{|x|_\varepsilon^2} - 2 \frac{|x|^2}{|x|_\varepsilon^4} \leq \frac{d}{|x|_\varepsilon^2}$ (sufficient for our purposes here), we arrive at
$$
\big|{\rm div\,}\frac{\nabla \psi_\varepsilon}{\psi_\varepsilon} \big| \leq \frac{(d-2) \nu }{N}\sum_{i=1}^N \sum_{j=1, j \neq i}^N\frac{1}{|x^i-x^j|_\varepsilon^2}.
$$
So, for all $x \in D_R$,
\begin{align*}
|U_\varepsilon(x)| & \leq \frac{1}{\prod_{1 \leq i'<j' \leq N}|x^{i'}-x^{j'}|_\varepsilon^{-\frac{\nu}{N}}+1}  \frac{d \nu}{N}\sum_{i=1}^N \sum_{j=1, j \neq i}^N\frac{1}{|x^i-x^j|_\varepsilon^2} \\
& <   \frac{d\nu}{N}\sum_{i=1}^N \sum_{j=1, j \neq i}^N \bigg(\prod_{1 \leq i'<j' \leq N}|x^{i'}-x^{j'}|_\varepsilon^{\frac{\nu}{N}}\bigg) \frac{1}{|x^i-x^j|_\varepsilon^2} \\
& (\text{use the hypothesis that, since we are in $D_R$, }\\
& \text{the distance between the particles cannot exceed $R$}, \\
& \text{so we replace all multiples except  one  with $(R^2+\varepsilon)^{\frac{\nu}{2N}}$)} \\
& \leq \frac{CR^{\frac{\nu}{N}(\frac{N(N-1)}{2}-1)}d \nu}{N} \sum_{i=1}^N\sum_{j=1,j \neq i}^N \frac{1}{(|x^i-x^j|^2+\varepsilon)^{1-\frac{\nu}{2N}}} =:W_\varepsilon(x).
\end{align*}
Let $\tilde{W}_\varepsilon$ denote an extension of $W_\varepsilon$ to $\mathbb R^{dN}$ by zero or even by
$$
\tilde{W}_\varepsilon(x):=\frac{CR^{\frac{\nu}{N}(\frac{N(N-1)}{2}-1)}d \nu}{N} \sum_{i=1}^N\sum_{j=1,j \neq i}^N \frac{1}{(|x^i-x^j|^2+\varepsilon)^{1-\frac{\nu}{2N}}}, \quad x \in \mathbb R^{dN}.
$$
Let us fix constant $0<\gamma<1$ by $\frac{1}{1+\gamma}=1-\frac{\nu}{2N}$. Then, using the many-particle Hardy inequality \eqref{hardy}, we obtain
\begin{equation}
\label{W}
\langle |\tilde{W}_\varepsilon|^{1+\gamma},f^2\rangle \leq \chi  \langle |\nabla f|^2\rangle, \quad f \in W^{1,2}(\mathbb R^{dN})
\end{equation}
for some $\chi=\chi_{d,N,R,\nu}<\infty$ independent of $\varepsilon$; the value of the form-bound $\chi$ is not important for our purposes here (although it can of course be calculated using \eqref{hardy}).
\end{enumerate}

\medskip

We are in position to complete the verification of \eqref{S4}.
The Dirichlet boundary condition in \eqref{prob2} give us a pointwise inequality in $D_R$:
$$
v_\varepsilon \leq \tilde{v}_\varepsilon,
$$
where $\tilde{v}_\varepsilon$ solves the elliptic equation 
\begin{equation}
\label{prob2_}
(\lambda+B)\tilde{v}_\varepsilon=\tilde{W}_\varepsilon \quad \text{ in } \mathbb R^{dN}.
\end{equation}
Armed with \eqref{hat_b} and \eqref{W}, we can apply \cite[Theorem 6, e.g.\,the first hypothesis]{Ki_multi} to \eqref{prob2_} provided that the first form-bound $\frac{\kappa}{2} \frac{N-1}{N}<4$ (i.e.\,$\kappa<8\frac{N}{N-1}$, which is satisfied by our hypothesis on $\nu=\sqrt{\kappa}\frac{d-2}{2}$):
$$
\|\tilde{v}_\varepsilon\|_{L^\infty(\mathbb R^{dN})} \leq C<\infty
$$
for constant $C$ independent of $\varepsilon$, and that can be made as small as needed by assuming that $\lambda$ is fixed sufficiently large. This is what was needed. (The proof of \cite[Theorem 6]{Ki_multi} uses the elliptic De Giorgi's iterations.)

Alternatively, we can deal directly with the Cauchy problem \eqref{prob1} and prove the uniform in $\varepsilon>0$ boundedness of $u_\varepsilon$ on $[0,T] \times \mathbb R^{dN}$ using the results of \cite{KiS_sharp}, see remarks in the end of the introduction there and Remark 4, also there. (The proofs in \cite{KiS_sharp} use the parabolic De Giorgi's iterations.)

This ends the verification of \eqref{S4}, and so Theorem A yields the heat kernel bound in Theorem \ref{thm3}.
\end{proof}
\bigskip

\section{Proof of Theorem A (abstract desingularization)}
\label{app_A}

In the proof of Theorem A we use the following weighted variant of the Coulhon-Raynaud extrapolation theorem \cite[Prop.\,II.2.1, Prop.\,II.2.2]{VSC}.

\begin{theorem}[\cite{KSS}]
\label{thmE2}
Let $U^{t,\theta}$ be a two-parameter family of operators
\[U^{t,\theta}f = U^{t,\tau}U^{\tau,\theta}f,  \qquad f \in L^1 \cap L^\infty, \quad 0 \leq \theta < \tau < t \leq \infty.
\]
Suppose that for some $1 \leq p < q < r \leq \infty$, $\nu>0$
\begin{align*}
\| U^{t,\theta} f \|_p & \leq M_1 \| f \|_{p,\sqrt{\psi}}, \quad 0 \leq \psi \in L^1+L^\infty,  \quad \|f\|_{p,\sqrt{\psi}}:=\langle |f|^p \psi \rangle^{1/p},\\
 \| U^{t,\theta} f \|_r & \leq M_2 (t-\theta)^{-\nu} \|  f \|_q
\end{align*}
for all $(t,\theta)$ and $f \in L^1 \cap L^\infty.$ Then
\[
\| U^{t,\theta} f \|_r \leq M (t-\theta)^{-\nu/(1-\beta)} \| f \|_{p,\sqrt{\psi}} ,
\]
where $\beta = \frac{r}{q}\frac{q-p}{r-p}$ and $M = 2^{\nu/(1-\beta)^2} M_1 M_2^{1/(1-\beta)}.$
\end{theorem}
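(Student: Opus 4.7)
The plan is to derive a self-improving functional inequality for $\|U^{t,\theta}f\|_r$ by combining the two hypotheses through the semigroup-like property, and then iterate it. Fix $f \in L^1 \cap L^\infty$ and $0 \leq \theta < t$, set $s := t-\theta$, and assume $\|f\|_{p,\sqrt{\psi}} > 0$ (otherwise the hypothesis on the $L^p$-side forces $U^{t,\theta}f = 0$, so the conclusion is trivial).

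Split $U^{t,\theta} = U^{t,\tau}U^{\tau,\theta}$ with $\tau := \theta + s/2$. The ultracontractive hypothesis applied to the first factor gives
\[
\|U^{t,\theta}f\|_r \leq M_2 (s/2)^{-\nu}\,\|U^{\tau,\theta}f\|_q.
\]
Since $p < q < r$ with $\tfrac{1}{q} = \tfrac{1-\beta}{p} + \tfrac{\beta}{r}$, log-convexity of $L^p$-norms (one line of H\"older) yields $\|g\|_q \leq \|g\|_p^{1-\beta}\|g\|_r^\beta$. Applying this to $g = U^{\tau,\theta}f$ and bounding $\|U^{\tau,\theta}f\|_p \leq M_1\|f\|_{p,\sqrt{\psi}}$ via the other hypothesis produces the recursion
\[
\|U^{\theta+s,\theta}f\|_r \;\leq\; c\, s^{-\nu}\, \|f\|_{p,\sqrt{\psi}}^{\,1-\beta}\, \|U^{\theta+s/2,\theta}f\|_r^{\beta}, \qquad c := 2^\nu M_2 M_1^{1-\beta}.
\]

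Next, I would iterate this recursion. Setting $a_k := \|U^{\theta+s/2^k,\theta}f\|_r$ and writing $N := \|f\|_{p,\sqrt{\psi}}$, the recursion at scale $s/2^k$ reads $a_k \leq c\,2^{\nu k}\,s^{-\nu}\,N^{1-\beta}\,a_{k+1}^\beta$. Telescoping $n$ steps gives
\[
a_0 \;\leq\; c^{E_n}\, s^{-\nu E_n}\, 2^{\nu F_n}\, N^{(1-\beta)E_n}\, a_n^{\beta^n},
\]
where $E_n := \sum_{k=0}^{n-1}\beta^k$ and $F_n := \sum_{k=1}^{n-1}k\beta^k$. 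The strict inequality $q < r$ forces $\beta < 1$, so $E_n \to 1/(1-\beta)$ and $F_n \to \beta/(1-\beta)^2$, while $(1-\beta)E_n \to 1$. The arithmetic identity $\tfrac{\nu}{1-\beta} + \tfrac{\nu\beta}{(1-\beta)^2} = \tfrac{\nu}{(1-\beta)^2}$ then recombines the accumulated factors of $c^{E_n}$ and $2^{\nu F_n}$ to produce exactly $M = 2^{\nu/(1-\beta)^2}M_1 M_2^{1/(1-\beta)}$, and the $N^{(1-\beta)E_n}$ factor converges to $N$ on the nose, giving the claimed homogeneity in $\|f\|_{p,\sqrt{\psi}}$.

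The main technical point is to control the residual $a_n^{\beta^n}$ as $n \to \infty$. For this I would invoke hypothesis (B) directly: since $f \in L^1 \cap L^\infty \subset L^q$,
\[
a_n \;\leq\; M_2(s/2^n)^{-\nu}\|f\|_q \;=\; M_2\, 2^{\nu n}\, s^{-\nu}\, \|f\|_q,
\]
so $a_n^{\beta^n} \leq M_2^{\beta^n}\, 2^{\nu n \beta^n}\, s^{-\nu\beta^n}\, \|f\|_q^{\beta^n}$. Since $\beta < 1$, both $\beta^n \to 0$ and $n\beta^n \to 0$, so every factor on the right tends to $1$. The role of the hypothesis $f \in L^1 \cap L^\infty$ is precisely to furnish a finite $\|f\|_q$ to launch the iteration; the vanishing exponent $\beta^n$ then erases any trace of this auxiliary quantity from the limiting constant. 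Passing to the limit $n \to \infty$ in the telescoped inequality yields the stated bound, with constant $M$ exactly as claimed.
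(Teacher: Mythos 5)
Your proof is correct and establishes the stated bound with the exact constant $M = 2^{\nu/(1-\beta)^2}M_1M_2^{1/(1-\beta)}$. The first step — splitting the interval at its midpoint, applying the $q\to r$ bound, interpolating $\|\cdot\|_q \leq \|\cdot\|_p^{1-\beta}\|\cdot\|_r^\beta$, and then the weighted $L^p$ bound — is the same recursive inequality the paper derives. Where you diverge is in how the recursion is resolved. The paper defines the supremum quantity
$$R_T = \sup_{t-\theta\le T}\Bigl[(t-\theta)^{\nu/(1-\beta)}\,\|U^{t,\theta}f\|_r/\|f\|_{p,\sqrt{\psi}}\Bigr]$$
and shows it satisfies $R \le M^{1-\beta}R^\beta$, whence $R \le M$ once one knows $R<\infty$; this is a two-line fixed-point argument but leaves the finiteness of $R$ (and the accounting of dyadic factors that eventually produce the exponent $\nu/(1-\beta)^2$) implicit. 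You instead iterate the recursion explicitly down to scale $s/2^n$, track the geometric sums $E_n$ and $F_n$, and — the genuinely extra content in your version — kill the residual $a_n^{\beta^n}$ by invoking the ultracontractive hypothesis once more to get $a_n \lesssim 2^{\nu n}s^{-\nu}\|f\|_q$ and observing that $\beta^n\to 0$ and $n\beta^n\to 0$. This plays precisely the role that finiteness of $R$ plays in the paper's argument, but is done head-on, which makes the reliance on $f\in L^1\cap L^\infty$ transparent and shows explicitly how the auxiliary $\|f\|_q$ disappears in the limit. Both routes yield the same constant; yours is longer but more self-documenting, and correctly reproduces the $2^{\nu/(1-\beta)^2}$ factor from the identity $\tfrac{\nu}{1-\beta}+\tfrac{\nu\beta}{(1-\beta)^2}=\tfrac{\nu}{(1-\beta)^2}$.
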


\begin{proof}[Proof of Theorem \ref{thmE2}]

We have ($t_\theta:=\frac{t+\theta}{2}$)
\begin{align*}
\| U^{t, \theta} f \|_r & \leq M_2 (t-t_\theta)^{-\nu} \| U^{t_\theta,\theta} f \|_q \\
& \leq M_2 (t-t_\theta)^{-\nu} \| U^{t_\theta,\theta} f \|_r^\beta \;\| U^{t_\theta,\theta} f \|_p^{1-\beta} \\
& \leq M_2 M_1^{1-\beta} (t-t_\theta)^{-\nu} \| U^{t_\theta,\theta} f \|_r^\beta \;\| f \|_{p,\sqrt{\psi}}^{1-\beta},
\end{align*}
and hence
\[
(t-\theta)^{\nu/(1-\beta)} \| U^{t,\theta} f \|_r/\| f \|_{p,\sqrt{\psi}} \leq M_2 M_1^{1-\beta} 2^{\nu/(1-\beta)} \big [(t - \theta)^{\nu/(1-\beta)} \| U^{t_\theta,\theta} f \|_r\;/\| f \|_{p,\sqrt{\psi}} \big ]^\beta.
\]
Setting $R_{2 T}: = \sup_{t-\theta \in ]0,T]} \big [ (t-\theta)^{\nu/(1-\beta)} \| U^{t,\theta} f \|_r/\| f \|_{p,\sqrt{\psi}} \big ],$ we obtain from the last inequality that $R_{2 T} \leq M^{1-\beta} (R_T)^\beta.$ But $R_T \leq R_{2T}$, and so $R_{2T} \leq M.$
The proof of Theorem \ref{thmE2} is completed.
\end{proof}

\begin{proof}[Proof of Theorem A]
By \eqref{S4} and \eqref{S3},
\begin{align*}
\|e^{-t\Lambda}h\|_{1} &\leq c_0^{-1} \|\varphi e^{-t\Lambda} \varphi^{-1} \varphi h \|_1  \\
& \leq c_0^{-1}c_1 \|h\|_{1,\sqrt{\varphi}}, \qquad  h \in L^\infty_{\rm com}.
\end{align*}
The latter, \eqref{S1} and Theorem \ref{thmE2} with $\psi:=\varphi$ yield
$$
\|e^{-t\Lambda}f\|_{\infty} \leq Mt^{-a}\|\varphi f\|_1, \quad t \in ]0,T], \quad f \in  L^\infty_{\rm com}.
$$
Note that \eqref{S1} verifies the assumptions of the Dunford-Pettis Theorem, so, for every $t>0$, $e^{-t\Lambda}$ is an integral operator.
The previous estimate thus yields \eqref{nie} and ends the proof of Theorem A.
\end{proof}

\bigskip

\section{Trotter's approximation theorem}
\label{app_B}

Consider a sequence $\{e^{-tA_k}\}_{k=1}^\infty$ of strongly continuous semigroups on a Banach space $Y$.

\begin{theorem}[{H.F.\,Trotter \cite[Ch.\,IX, sect.\,2]{Ka}}]
Let
$\sup_k\|(\mu+A_k)^{-m}\|_{Y \rightarrow Y} \leq M(\mu-\omega)^{-m}$, $m=1,2,\dots$, $\mu>\omega$, and 
$s\mbox{-}\lim_{\mu \rightarrow \infty}\mu(\mu+ A_k)^{-1}=1$ uniformly in $k$, and let $s\mbox{-}\lim_{k}(\zeta+ A_k)^{-1}$
exist for some $\zeta$ with ${\rm Re\,} \zeta>\omega$. Then there exists a strongly continuous semigroup $e^{-tA}$ such that
$$
(z+ A_k)^{-1} \overset{s}{\rightarrow} (z+ A)^{-1} \quad \text{ for every } {\rm Re}\, z>\omega,
$$
and
$$
e^{-tA_k} \overset{s}{\rightarrow} e^{-tA}
$$
uniformly in any finite interval of $t \geq 0$.
\end{theorem}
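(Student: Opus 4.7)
The plan is the classical Trotter--Kato route: first upgrade the single-point strong convergence of the resolvents to uniform-in-compacta convergence at every $z$ with $\Real z>\omega$, identify the limit as the resolvent of a densely defined closed operator $A$ via Hille--Yosida, and then transfer resolvent convergence to semigroup convergence through a Yosida-type approximation.

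First I would show that the set
\[
\Sigma:=\{z\in\mathbb C:\Real z>\omega\text{ and }s\text{-}\lim_k(z+A_k)^{-1}\text{ exists}\}
\]
is both open and closed in the half-plane $\Real z>\omega$. Openness follows from the Neumann-series representation $(z+A_k)^{-1}=\sum_{n\ge 0}(\zeta-z)^n(\zeta+A_k)^{-n-1}$, which converges uniformly in $k$ on a neighbourhood of any $\zeta\in\Sigma$ thanks to the uniform bound $\|(\zeta+A_k)^{-m}\|\le M(\Real\zeta-\omega)^{-m}$. Closedness follows from an $\varepsilon/3$ argument, again using the same uniform bound. Since $\Sigma$ contains the assumed $\zeta$ and the half-plane is connected, $\Sigma=\{\Real z>\omega\}$. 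Write $R(z):=s\text{-}\lim_k(z+A_k)^{-1}$.

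Next I would pass the resolvent equation
\[
(z+A_k)^{-1}-(w+A_k)^{-1}=(w-z)(z+A_k)^{-1}(w+A_k)^{-1}
\]
to the limit to get that $\{R(z)\}_{\Real z>\omega}$ is a pseudo-resolvent. To promote it to a genuine resolvent I need injectivity of $R(z)$ and density of its range. For density, given $f\in Y$, write $f=\lim_{\mu\to\infty}\mu(\mu+A_k)^{-1}f$ uniformly in $k$; passing $k\to\infty$ gives $f=\lim_{\mu\to\infty}\mu R(\mu)f$, so the range of $R(\mu)$ is dense. Injectivity follows from the pseudo-resolvent identity together with the fact that $\mu R(\mu)\to 1$ strongly: if $R(z)f=0$ then the identity forces $R(\mu)f=0$ for all $\mu>\omega$, hence $f=\lim\mu R(\mu)f=0$. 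Consequently $R(z)=(z+A)^{-1}$ for a unique densely defined closed $A$, and the uniform resolvent bound $\|(z+A)^{-m}\|\le M(\Real z-\omega)^{-m}$ passes to the limit. The Hille--Yosida theorem then produces a strongly continuous semigroup $e^{-tA}$ with $\|e^{-tA}\|\le Me^{\omega t}$.

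Finally, the main obstacle: transferring resolvent convergence to semigroup convergence uniformly on compact $t$-intervals. The cleanest route is via the Yosida approximations $A_k^{(n)}:=nA_k(n+A_k)^{-1}=n-n^2(n+A_k)^{-1}$ and analogously $A^{(n)}$. For fixed $n$ these are bounded, so $e^{-tA_k^{(n)}}\to e^{-tA^{(n)}}$ strongly, uniformly on compact $t$-intervals, by term-by-term convergence of the exponential series together with the uniform-in-$k$ bound on $(n+A_k)^{-1}$. The step $e^{-tA^{(n)}}\to e^{-tA}$ (and its $k$-uniform analogue for $e^{-tA_k^{(n)}}\to e^{-tA_k}$) is the standard Hille--Yosida estimate
\[
\|e^{-tA^{(n)}}f-e^{-tA}f\|\le t\,\|A^{(n)}f-Af\|\,\bigl(\text{times a uniform bound}\bigr),\qquad f\in D(A),
\]
together with density of $D(A)$ and the uniform semigroup bound. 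Combining these via a triangle-inequality $\varepsilon/3$ argument with $n$ chosen large first and then $k$ large, and remembering that all estimates are uniform for $t$ in a compact interval, yields the claimed strong convergence $e^{-tA_k}\to e^{-tA}$ uniformly in $t$ on compacta. The delicate point here is that one must take $n$ large enough to control the Yosida-to-semigroup error simultaneously for all $k$, which is exactly why the uniform-in-$k$ resolvent bound and the uniform-in-$k$ condition $\mu(\mu+A_k)^{-1}\to 1$ are indispensable.
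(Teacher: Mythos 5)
The paper does not prove this theorem; it states it as a citation to Kato's book, so there is no proof in the paper to compare against. Your proposal follows the classical Trotter--Kato route, and the first parts (connectedness argument for $\Sigma$, pseudo-resolvent identity, density and injectivity via $\mu R(\mu)\to 1$, Hille--Yosida) are correct in substance. There is, however, a genuine gap in the final step, the passage from resolvent convergence to semigroup convergence via Yosida approximants.

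The issue is that the estimate you invoke,
\[
\|e^{-tA_k^{(n)}}f - e^{-tA_k}f\|\leq C\,t\,\|A_k^{(n)}f - A_kf\|
 = C\,t\,\|A_k(n+A_k)^{-1}A_kf\|,
\]
requires $f\in D(A_k)$, and those domains vary with $k$; in general $D(A)\not\subseteq D(A_k)$, so ``density of $D(A)$'' cannot be used to run this estimate uniformly in $k$. Moreover, even if $f\in D(A_k)$, the quantity $\|A_kf\|$ is not controlled uniformly in $k$, and the crude bound $\|A_k(n+A_k)^{-1}\|\leq 1+Mn/(n-\omega)$ does not go to zero as $n\to\infty$. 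The uniform-in-$k$ hypothesis $\mu(\mu+A_k)^{-1}\to 1$ gives convergence for each fixed vector $g$, not for the $k$-dependent vectors $A_kf$, so it does not close the argument as written. The standard ways to repair this are either (a) work with the $k$-dependent vectors $f_k:=(\lambda+A_k)^{-1}g$, for which $A_kf_k = g-\lambda(\lambda+A_k)^{-1}g$ converges strongly (hence is relatively compact), and combine this with the uniform strong convergence of $1-n(n+A_k)^{-1}$ via a total-boundedness argument; or (b) bypass Yosida approximation entirely and use the telescoping/Duhamel identity
\[
(\lambda+A_k)^{-1}e^{-tA}(\lambda+A)^{-1}g - e^{-tA_k}(\lambda+A_k)^{-1}(\lambda+A)^{-1}g
= \int_0^t e^{-(t-s)A_k}\bigl[(\lambda+A)^{-1}-(\lambda+A_k)^{-1}\bigr]e^{-sA}g\,ds,
\]
whose right-hand side tends to zero uniformly on compact $t$-intervals by dominated convergence and the uniform semigroup bound; this is essentially how Kato (and Engel--Nagel, Pazy) proceed, and it avoids the domain mismatch entirely. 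I would recommend replacing the Yosida step with (b), or making (a) explicit.
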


The first condition of the theorem is satisfied if e.g.\,$\sup_k\|(z+A_k)^{-1}\|_{Y \rightarrow Y} \leq C|z-\omega|^{-1}$, ${\rm Re\,}z>\omega$.

\bigskip

\medskip

\section{Immersion of particle system in a turbulent flow}
\label{C_app}

We mentioned in the comments after Theorem \ref{thm2} that its proof extends to operator \eqref{ks7}. Here are the details.
Recall notation $\phi_\alpha(x)=e^{\alpha \cdot x}$. Put 
$$
A_\alpha^c:=\phi_\alpha A^c \phi_\alpha^{-1},
$$ 
where  
\begin{equation*}
A^c =- \nabla \cdot (I+C) \cdot \nabla - \frac{\nabla \hat{\varphi}}{\hat{\varphi}}\cdot (I+C)\cdot \nabla,
\end{equation*}
and set 
$$ 
u(t)=e^{-t(\omega + A_\alpha^c)}f, \quad \text{for}\,  f \in C_c^\infty.   
$$
We assume without loss of generality that $f \geq 0$, so $u(t) \geq 0$ for all $t>0$. 
The counterpart of \eqref{A_first_eq} in the proof of Theorem \ref{thm2}  is
\begin{equation}
\omega \langle u^{\frac{p}{2}},u^{\frac{p}{2}}\rangle_{\hat{\varphi}} + \frac{1}{p}\partial_t \langle u^{\frac{p}{2}},u^{\frac{p}{2}}\rangle_{\hat{\varphi}} + \langle A^c_\alpha u,u^{p-1}\rangle_{\hat{\varphi}}=0,
\end{equation}
where we evaluate, for $p \geq 2$,
\begin{align*}
\langle A_\alpha^c  u,u^{p-1} \rangle_{\hat{\varphi}} & =\langle A^c \phi_{-\alpha}u,\phi_\alpha u^{p-1} \hat{\varphi}\rangle \\
& = \langle  - \nabla \cdot(I +C) \cdot \nabla (\phi_{-\alpha}u),\phi_\alpha u^{p-1} \hat{\varphi}\rangle - \langle \frac{\nabla \hat{\varphi}}{\hat{\varphi}}(I+C)\nabla (\phi_{-\alpha}u),\phi_\alpha u^{p-1} \hat{\varphi}\rangle\\
& = \langle A_\alpha u,u^{p-1}\rangle_{\hat{\varphi}} +  \langle  - \nabla \cdot C \cdot \nabla (\phi_{-\alpha}u),\phi_\alpha u^{p-1} \hat{\varphi}\rangle  - \langle \frac{\nabla \hat{\varphi}}{\hat{\varphi}}C\cdot \nabla (\phi_{-\alpha}u),\phi_\alpha u^{p-1} \hat{\varphi}\rangle\\
& =: \langle A_\alpha u,u^{p-1}\rangle_{\hat{\varphi}} + I - J,
\end{align*}
where $\langle A_\alpha u,u^{p-1}\rangle_{\hat{\varphi}}$ was already evaluated in \eqref{form_exp}. 
Next, integrating by parts, we evaluate
\begin{align*}
I=\langle  - \nabla \cdot C \cdot \nabla (\phi_{-\alpha}u),\phi_\alpha u^{p-1} \hat{\varphi}\rangle & = \langle   C \cdot \nabla (\phi_{-\alpha}u),\nabla (\phi_\alpha u^{p-1} \hat{\varphi})\rangle \\
& =\langle   C \cdot \nabla (\phi_{-\alpha}u),\nabla (\phi_\alpha u^{p-1}) \hat{\varphi} \rangle + \langle   C \cdot \nabla (\phi_{-\alpha}u),\phi_\alpha u^{p-1} \nabla \hat{\varphi}\rangle,
\end{align*}
where the last term coincides with $J$. Therefore,
$$
I-J=\langle   C \cdot \nabla (\phi_{-\alpha}u),\nabla (\phi_\alpha u^{p-1}) \hat{\varphi} \rangle.
$$
In turn, taking into account skew-symmetry of $C$,
\begin{align*}
\langle   C \cdot \nabla (\phi_{-\alpha}u),\nabla (\phi_\alpha u^{p-1} )\hat{\varphi} \rangle & = \langle C \cdot (-\alpha u + \nabla u),(\alpha u^{p-1} + (p-1)u^{p-2}\nabla u) \hat{\varphi}\rangle \\
& = \langle C \cdot (-\alpha) u,(p-1)u^{p-2}(\nabla u)\hat{\varphi}\rangle + \langle C \cdot \nabla u,\alpha u^{p-1}\hat{\varphi} \rangle \\
& = \langle \alpha  u,(p-1)u^{p-2}(C \cdot \nabla u)\hat{\varphi}\rangle + \langle C \cdot \nabla u,\alpha u^{p-1}\hat{\varphi} \rangle \\
& =p\langle C \cdot \nabla u, \alpha u^{p-1}\hat{\varphi}\rangle = 2\langle C \cdot \nabla v, \alpha v\hat{\varphi}\rangle,
\end{align*}
where we have put $v=u^{\frac{p}{2}}$ as in the proof of Moser's Lemma \ref{moser_lem}. Therefore,
$$
|I-J| \leq \|C\|_\infty \biggl[ \varepsilon_0 \langle |\nabla v|\rangle_{\hat{\varphi}} +  \frac{|\alpha|^2}{\varepsilon_0}\langle v,v\rangle_{\hat{\varphi}} \biggr]
$$
for any $\varepsilon_0>0$, where, recall, $\langle |\nabla v|\rangle_{\hat{\varphi}} \equiv a[v,v]$. Take $\varepsilon_0=\frac{1}{2}\frac{4(p-1)}{\|C\|_\infty p^2}$. Then $\varepsilon_0^{-1} \leq \frac{p}{2}\|C\|_\infty$. With this choice of $\varepsilon_0$, we arrive at
\begin{align*}
\langle A_\alpha^c  u, & u^{p-1} \rangle_{\hat{\varphi}} = \langle A_\alpha u,u^{p-1}\rangle_{\hat{\varphi}} \\
& + \text{ term whose absolute value is smaller than }\frac{1}{2}\frac{4(p-1)}{p^2} a[v,v] +  |\alpha|^2 \frac{p\|C\|_\infty^2}{2}\langle v,v\rangle_{\hat{\varphi}}.
\end{align*}
Now we can plug this estimate in the proof of Moser's Lemma \ref{moser_lem} and argue as before.
\bigskip

\bigskip

\section{Construction of a posteriori heat kernel for operator $L$}
\label{semigroup_app}

Let $d \geq 3$. Let $b_\varepsilon(x)=(b_\varepsilon^i(x))_{i=1}^N:\mathbb R^{dN} \rightarrow \mathbb R^{dN}$ be the drift in operator $L_\varepsilon=-\Delta + b_\varepsilon \cdot \nabla$ in Theorem \ref{thm2}, that is, 
$$
b_\varepsilon^i(x)=\sqrt{\kappa}\frac{d-2}{2}\frac{1}{N}\sum_{j=1, j \neq i}^N \frac{x^i-x^j}{|x^i-x^j|^2 + \varepsilon}
$$
if $d \geq 4$, and
$$
b_\varepsilon^i(x)=\sqrt{\kappa}\frac{d-2}{2}\frac{1}{N}\sum_{j=1, j \neq i}^N \frac{1}{|x^i-x^j|+\varepsilon}\frac{x^i-x^j}{|x^i-x^j|}
$$
if $d=3$.
The following estimates are consequences of the many-particle Hardy inequality \eqref{hardy}. They will be needed in Proposition \ref{prop_semigroup} where we construct the limiting semigroup $\lim_{\varepsilon \downarrow 0}e^{-t L_\varepsilon}$.

\begin{lemma}
\label{b_est11}
For every $\varepsilon>0$, 
$$
\langle |b_\varepsilon|^2, f^2 \rangle \leq \frac{\kappa}{2}\frac{N-1}{N} \langle |\nabla  f|^2 \rangle, \quad f \in W^{1,2}(\mathbb R^{dN}).
$$
\end{lemma}
\begin{proof}We appeal to the monotonicity properties of $|b_\varepsilon|$ (say, $d \geq 4$; the case $d=3$ is treated in the same way):
\begin{align*}
|b_\varepsilon(x)|^{2} & = \sum_{i=1}^N |b^i_\varepsilon(x)|^{2} = \kappa\frac{(d-2)^2}{4}  \sum_{i=1}^N  \biggl|\frac{1}{N}  \sum_{j=1,j \neq i}^N \frac{x^i-x^j}{|x^i-x^j|^2+\varepsilon}\biggr|^{2} \\ 
& \leq  \kappa\frac{(d-2)^2}{4}  \sum_{i=1}^N  \biggl(\frac{1}{N}  \sum_{j=1,j \neq i}^N \frac{1}{|x^i-x^j|}\biggr)^{2}\\
&\leq \kappa\frac{(d-2)^2}{4}  \sum_{i=1}^N \frac{N-1}{N^{2}} \sum_{j=1,j \neq i}^N \frac{1}{|x^i-x^j|^2}.
\end{align*}
So,  switching to the summation above the diagonal,
$$
\langle |b_\varepsilon|^{2}, f^2\rangle \leq \kappa\frac{(d-2)^2}{2}  \frac{N-1}{N^{2}}  \sum_{1 \leq i<j \leq N} \big\langle  \frac{1}{|x^i-x^j|^2}, f^2\big\rangle.
$$
Invoking now the many-particle Hardy inequality \eqref{multi_hardy} (or, rather, simply \eqref{hardy}), we obtain
\begin{align*}
\langle |b_\varepsilon|^{2}, f^2\rangle & \leq \kappa\frac{(d-2)^2}{2}  \frac{N-1}{N^{2}} C_{d,N}^{-1} \langle |\nabla  f|^2\rangle \\
& \leq \kappa\frac{(d-2)^2}{2}  \frac{N-1}{N^{2}} \frac{N}{(d-2)^2}\langle |\nabla  f|^2\rangle,
\end{align*}
which is the claimed estimate.
\end{proof}

\begin{lemma} 
\label{b_est1}
For every $\varepsilon>0$, 
$$
|\langle  b_{\varepsilon} \cdot \nabla  f , f\rangle| \leq 
\frac{\sqrt{\kappa}}{2}\langle |\nabla  f|^2\rangle, \qquad  f \in W^{1,2}(\mathbb R^{dN}).
$$
\end{lemma}
\begin{proof}
Integrating by parts, we obtain
$$
\langle  b_{\varepsilon} \cdot \nabla  f,  f\rangle  = -\frac{1}{2}\langle  {\rm div\,}b_{\varepsilon},  f^2\rangle.
$$
Now we simply use \eqref{phi_calc} from the proof of Lemma \ref{sob_emb} and then apply the many-particle Hardy inequality \eqref{hardy}.
\end{proof}

We will also need a weighted variant of  the previous estimate.
Set
\begin{equation}
\label{rho_def}
\rho(x):=(1+\sigma^2|x|^{2})^{-\gamma}, \quad \sigma>0,
\end{equation}
where $\gamma>dN+2\,(>\frac{dN}{2})$ is fixed, so that $\langle \rho\rangle<\infty$ and also $\langle |\nabla \sqrt{\rho}|^2 \rangle<\infty$. This weight has property 
\begin{equation}
\label{rho_est}
|\nabla \rho| \leq C\sigma\rho.
\end{equation}
Let us emphasize that the choice of $\sigma$ is up to us, so using the previous estimate we can replace all occurrences of $\nabla \rho$ by $\rho$ with a small coefficient.

\begin{lemma}
\label{b_est2}
For every $\varepsilon>0$, 
\begin{equation*}
|\langle  b_{\varepsilon} \cdot \nabla   f,\rho  f\rangle| \leq c\sigma  \bigg(\langle \rho|\nabla   f|^2\rangle + \langle \rho   f^2\rangle\bigg) + 
\frac{\sqrt{\kappa}}{2}\langle |\nabla  f|^2\rangle
\end{equation*}
for a constant $c$ independent of $\varepsilon$.
\end{lemma}

\begin{proof} We integrate by parts:
$$
\langle  b_{\varepsilon} \cdot \nabla  f, \rho  \hat{\varphi}\rangle  = -\frac{1}{2}\langle  {\rm div\,}b_{\varepsilon}, \rho  f^2\rangle - \frac{1}{2} \langle b_\varepsilon \cdot \nabla \rho,  f^2 \rangle.
$$
The first term in the right-hand side is estimated as in the proof of Lemma \ref{b_est1}:
\begin{align*}
|\langle  b_{\varepsilon} \cdot \nabla  f, \rho  f\rangle| & \leq \frac{\sqrt{\kappa}}{2}\langle |\nabla(\sqrt{\rho}  f)|^2\rangle + \frac{1}{2}\langle |b_\varepsilon|,|\nabla \rho| f^2\rangle \\
& \text{(apply \eqref{rho_est})} \\
& \leq \frac{\sqrt{\kappa}}{2}\langle |\nabla(\sqrt{\rho}  f)|^2\rangle + \frac{C\sigma}{2} \langle |b_\varepsilon|, \rho   f^2 \rangle \\
& \leq \frac{\sqrt{\kappa}}{2}\langle |\nabla(\sqrt{\rho}  f)|^2\rangle + \frac{C\sigma}{2}\biggl(\alpha \langle |b_\varepsilon|^2, \rho   f^2 \rangle + \frac{1}{4\alpha}\langle \rho  f^2\rangle\biggr), \quad \alpha>0.
\end{align*}
Lemma \ref{b_est11} and, once again, \eqref{rho_est} now yield the sough inequality.
\end{proof}

The following proposition is a part of Theorem \ref{thm2}(\textit{ii}).

\begin{proposition}
\label{prop_semigroup}
Let $d \geq 3$, $N \geq 2$.
Assume that 
$$
\kappa<16.
$$
Then, for every $r \geq 2$ satisfying $
r>\frac{4}{4-\sqrt{\kappa}}$,
there exists the limit
$$
 s\mbox{-}L^r\mbox{-}\lim_{\varepsilon \downarrow 0}e^{-t L_\varepsilon} \quad (\text{loc.\,uniformly in $t \geq 0$}),
 $$
and determines a strongly continuous semigroup on $L^r(\mathbb R^{dN})$, say, $e^{-tL}$.
\end{proposition}

\begin{remark}
We assume $r \geq 2$ since we only need large $r$ to reach large strengths of attraction. In the case $1<r<2$ one needs to be more careful with the integration by parts, but this can be done, see e.g.\,\cite[Proposition 3.5]{MeS}.
\end{remark}

\begin{proof}[Proof of Proposition \ref{prop_semigroup}]
Put $u_\varepsilon(t)=e^{-tL_\varepsilon}u_0$, $u_0 \in C_c^\infty(\mathbb R^{dN})$, where, thus,
\begin{equation}\label{CP}
\left\{
\begin{array}{l}
\partial_t u_\varepsilon -\Delta u_\varepsilon + b_{\varepsilon} \cdot \nabla u_\varepsilon =0 \\ 
u_\varepsilon(0)=u_0,
\end{array}
\right.
\end{equation}
where drift $b_\varepsilon$ is bounded and smooth if $d \geq 4$ or only bounded if $d=3$, see the beginning of this section for the definitions.

\medskip

Step 1 (\textit{Contraction estimate}). We have
\begin{equation}
\label{contr}
\|u_\varepsilon(t)\|_r \leq \|u_0\|_r, \quad \text{ for all } t>0.
\end{equation}
We multiply the equation in \eqref{CP} by $u_\varepsilon|u_\varepsilon|^{r-2}$, integrate by parts using identities $ \nabla |u_\varepsilon|^{r} = r |u_\varepsilon|^{r-1}  \nabla |u_\varepsilon|$,  $ \nabla |u_\varepsilon|^{r} = r |u_\varepsilon|^{r-2} u_\varepsilon \nabla u_\varepsilon$: 
\begin{align*}
0 & = \langle \partial_t u_\varepsilon, u_\varepsilon|u_\varepsilon|^{r-2}\rangle + \langle -\Delta u_\varepsilon,u_\varepsilon|u_\varepsilon|^{r-2}\rangle + \langle  b_{\varepsilon} \cdot \nabla u_\varepsilon,u_\varepsilon|u_\varepsilon|^{r-2}\rangle,
\end{align*}
so
\begin{align*}
 0 & = \frac{1}{r}  \partial_t \langle |u_\varepsilon|^{r}\rangle  + (r-1)  \langle \vert \nabla u_\varepsilon\vert^2 |u_\varepsilon|^{r-2}\rangle +  \langle  b_{\varepsilon} \cdot \nabla u_\varepsilon,u_\varepsilon|u_\varepsilon|^{r-2}\rangle,  
\end{align*}
and thus
$$
\partial_t \langle |u_\varepsilon|^{r}\rangle  +  \frac{4(r-1)}{r}\langle \nabla |u_\varepsilon|^{\frac{r}{2}},\nabla |u_\varepsilon|^{\frac{r}{2}}\rangle =-  2\langle  b_{\varepsilon} \cdot \nabla |u_\varepsilon|^{\frac{r}{2}} ,|u_\varepsilon|^{\frac{r}{2}}\rangle.
$$
Apply Lemma \ref{b_est1}:
$$
\partial_t \langle |u_\varepsilon|^{r}\rangle  +  \frac{4(r-1)}{r}\langle \nabla |u_\varepsilon|^{\frac{r}{2}},\nabla |u_\varepsilon|^{\frac{r}{2}}\rangle  \leq \sqrt{\kappa}\langle |\nabla |u_\varepsilon|^{\frac{r}{2}}|^2\rangle.
$$
We have $ \frac{4(r-1)}{r} - \sqrt{\kappa}>0$ since $r>\frac{4}{4-\sqrt{\kappa}}$. So,
\begin{equation}\label{equ 2.7}
\partial_t \langle |u_\varepsilon|^{r}\rangle \leq 0. 
\end{equation}
It remains to integrate in time  to obtain the sought contraction estimate \eqref{contr}.

\medskip

Step 2 (\textit{Cauchy sequence in  $L^\infty([0,T],L^r(\mathbb R^{dN}))$}). We fix any $\varepsilon_n \downarrow 0$ and, armed with \eqref{contr}, show that solutions $u_n:=u_{\varepsilon_n}$ constitute a Cauchy sequence in $L^\infty([0,T],L^r(\mathbb R^{dN}))$. Put $b_n:=b_{\varepsilon_n}$. In detail:

(a) We have
$$
\sup_{n}\int_0^T \langle |\nabla u_n|^2 \rho\rangle<\infty,
$$
where weight $\rho$ is defined by \eqref{rho_def}.

Indeed, multiplying the parabolic equation in \eqref{CP} by $u_n \rho$ and integrating by parts, we obtain 
$$
\langle u^2_n(t)\rho\rangle + \int_0^t \langle |\nabla u_n|^2 \rho\rangle = \langle u_0^2 \rho\rangle -\int_0^t\langle \nabla u_n,u_n\nabla \rho\rangle + \int_0^t \langle b_n \cdot \nabla u_n,u_n \rho\rangle.
$$
Using Cauchy-Schwarz inequality, \eqref{rho_est} and employing the contractivity estimate $\|u_n\|_\infty \leq \|u_0\|_\infty$, we arrive at
\begin{align*}
\langle u^2_n(t)\rho\rangle & + \int_0^t \langle |\nabla u_n|^2 \rho\rangle \\
& \leq \langle u_0^2 \rho\rangle + \beta \int_0^t \langle |\nabla u_n|^2 \rho\rangle +\frac{C\sigma}{4\beta}\int_0^t\langle u_n^2 \rho\rangle + \|u_0\|_\infty\biggl(\gamma\int_0^t |\nabla u_n|^2 \rho\rangle + \frac{1}{4\gamma}\int_0^t \langle |b_n|^2 \rho \rangle \biggr).
\end{align*}
It remains to choose $\beta$ and $\gamma$ small and to note that $\langle u_n^2 \rho\rangle \leq \|u_0\|^2_\infty \langle \rho \rangle<\infty$ and, crucially,
 $$\sup_{n}\int_0^t \langle |b_n|^2 \rho \rangle<\infty.$$ The latter is an immediate consequence of Lemma \ref{b_est2} and the integrability properties $|\nabla \rho|$, cf.\,\eqref{rho_est}. This yields the assertion of (a).

(b) Let us first show convergence with respect to weight $\rho$:
$$ 
\sup_{t \in [0,T]}\Vert (u_n(t) -u_m(t))\rho^{\frac{1}{r}} \Vert_r \rightarrow 0 \quad \;\text{as}\; n, m \rightarrow \infty, 
$$
To this end, we take the difference between the equations for $u_n$ and $u_m$, then multiply it by $\rho h|h|^{p-2}$, where $$h:=u_n-u_m,$$ and integrate by parts:
\begin{align*}
0  & = \langle  \partial_t h, \rho h|h|^{r-2}\rangle + \langle - \Delta h,\rho h|h|^{r-2}\rangle \\ 
& +\langle (b_{n} - b_{m}) \cdot \nabla u_n, \rho h|h|^{r-2}\rangle + \langle b_{m} \cdot \nabla h , \rho h|h|^{r-2}\rangle, 
\end{align*}
so
\begin{align*}
 0 & =   \partial_t \langle \rho |h|^{r}\rangle  +  \frac{4(r-1)}{r}\langle \rho \vert \nabla |h|^{\frac{r}{2}}\vert^2\rangle + 2  \langle \nabla |h|^{\frac{r}{2}} ,(\nabla \rho) |h|^{\frac{r}{2}} \rangle\\
&  + 2\langle  b_{m} \cdot \nabla |h|^{\frac{r}{2}} , \rho |h|^{\frac{r}{2}}\rangle  + r \langle (b_{n} - b_{m}) \cdot \nabla u_n, \rho h|h|^{r-2}\rangle.     
\end{align*}
Integrating in time and using $h(0)=0$, we obtain
\begin{align*}
\langle \rho & |h(t)|^{r}\rangle   +  \frac{4(r-1)}{r} \int_0^t \langle \rho \vert \nabla |h|^{\frac{r}{2}} \vert^2\rangle \le  2 \left\vert \int_0^t  \langle \nabla |h|^{\frac{r}{2}} ,(\nabla \rho) |h|^{\frac{r}{2}} \rangle \right\vert \\
&  + 2 \left\vert \int_0^t \langle  b_{m} \cdot \nabla |h|^{\frac{r}{2}} , \rho |h|^{\frac{r}{2}}\rangle \right\vert +  r  \left\vert \int_0^t \langle (b_{n} - b_{m}) \cdot \nabla u_n, \rho h |h|^{r-2}\rangle \right\vert.
\end{align*} 
It is seen that
\begin{align*}
\sup_{s \in [0,t]}\langle \rho & |h(s)|^{r}\rangle   +  \frac{4(r-1)}{r} \int_0^t \langle \rho \vert \nabla |h|^{\frac{r}{2}} \vert^2\rangle \\
& \leq  2 \left\vert \int_0^t  \langle \nabla |h|^{\frac{r}{2}} ,(\nabla \rho) |h|^{\frac{r}{2}} \rangle \right\vert   + 2 \left\vert \int_0^t \langle  b_{m} \cdot \nabla |h|^{\frac{r}{2}} , \rho |h|^{\frac{r}{2}}\rangle \right\vert +  r  \left\vert \int_0^t \langle (b_{n} - b_{m}) \cdot \nabla u_n, \rho h |h|^{r-2}\rangle \right\vert\\
&=:R_1+R_2+R_3.
\end{align*} 
We use \eqref{rho_est} and Cauchy-Schwartz to estimate $R_1$:
\begin{align*}
R_1 \leq C_1\sigma \int_0^t \langle \rho |\nabla |h|^{\frac{r}{2}}|^2 \rangle  +  C_2\sigma  \int_0^t \langle \rho |h|^r \rangle,
\end{align*} 
where, recall, $\sigma$ can and will be chosen sufficiently small.
Next, by Lemma \ref{b_est2},
\begin{align*}
R_2 := 2\left| \int_0^t \langle  b_n \cdot \nabla |h|^{\frac{r}{2}} , \rho |h|^{\frac{r}{2}}\rangle \right| \leq
 \biggl(\frac{\sqrt{\kappa}}{2}+c\sigma  \biggr)\langle \rho|\nabla |h|^{\frac{r}{2}}|^2\rangle + c\sigma \langle \rho |h|^r\rangle.
\end{align*} 
Finally, we have
\begin{align*}
R_3 := &  r  \left\vert \int_0^t \langle (b^{n} - b^{m}) \cdot \nabla u_n,  \rho h |h|^{r-2}\rangle \right\vert \\
&  \le r \int_0^t \int_{\mathbb{R}^{dN}}\vert b^{n} - b^{m} \vert \vert \nabla u_n \vert |h|^{r-1} \rho dx ds \\
& \le r \left( \int_0^t \int_{\mathbb{R}^{dN}}\vert b^{n} - b^{m} \vert^2 |h|^{2(r-1)} \rho dx ds\right)^{\frac{1}{2}}\left( \int_0^t \int_{\mathbb{R}^{dN}} \vert \nabla u_n \vert^2  \rho dx ds\right)^{\frac{1}{2}} \\ 
& \; (\text{we use $\Vert h \Vert_{\infty} \le 2\Vert u_0 \Vert_{\infty}$}) \\
& \le 2 r \Vert u_0 \Vert_{\infty}^{r-1}  \left( \int_0^t \int_{\mathbb{R}^{dN}}\vert b^{n} - b^{m} \vert^2  \rho
 dx ds\right)^{\frac{1}{2}}\left( \int_0^t \int_{\mathbb{R}^{dN}} \vert \nabla u_n \vert^2  \rho dx ds\right)^{\frac{1}{2}}  \rightarrow 0\; \text{as}\; n, m \rightarrow \infty,
\end{align*}
where we estimate the last factor using (a), and apply $\|\sqrt{\rho}(b-b_\varepsilon)\|_2 \rightarrow 0$.
This gives us (b).

(c)  Next, we show that the ``tails'' of $u_m$ are small, that is, for arbitrarily small $\epsilon>0$, parameter $\sigma$ in the definition of weight $\rho$ can be selected small enough, depending on the support of $u_0$, to give us inequality
\begin{equation}
\label{tail_est}
\sup_m \sup_{s \in [0,T]}\langle |u_m(s)|^r (1-\rho) \rangle <\epsilon, \quad \text{ for all } m \geq 1.
\end{equation}
Indeed, let us put for brevity
$$
\Upsilon:=\int_0^t \langle (1-\rho) \vert \nabla \vert u_m\vert^{\frac{r}{2}} \vert^2 \rangle
$$
 multiplying the parabolic equation \eqref{CP} by $(1-\rho)u_m \vert u_m\vert^{r-2}$ and integrate. We obtain
 \begin{align}
 \langle (1-\rho) u_m^{r}(t)\rangle - & \langle (1-\rho)u_0^{r}\rangle + \frac{4(r-1)}{r} \Upsilon\notag\\ 
  = & -2 \int_0^t \langle \nabla \vert u_m\vert^{\frac{r}{2}}, (\nabla \rho) \vert u_m\vert^{\frac{r}{2}} \rangle  - r\int_0^t \langle  b^m\cdot \nabla u_m, (1-\rho)u_{m}\vert u_m\vert^{r-2} \rangle.
\end{align}
Using Lemma \ref{b_est11}, we obtain
\begin{align*}
 - r\int_0^t \langle  b^m\cdot \nabla u_m, (1-\rho)u_{m}\vert u_m\vert^{r-2} \rangle  & =   - 2\int_0^t \langle  b^m\cdot \nabla \vert u_m \vert^{\frac{r}{2}}, (1-\rho)\vert u_m\vert^{\frac{r}{2}} \rangle \\
& \leq
\epsilon \Upsilon
+\frac{1}{\epsilon}
\int_0^t \langle |b^m|^2 (1-\rho) \vert u_m \vert^r  \rangle \\
& \leq 
\epsilon \Upsilon
+ \frac{1}{\epsilon} \frac{\kappa}{2} \frac{N-1}{N} \int_0^t \langle (\nabla (\vert u_m\vert^{\frac{r}{2}} \sqrt{1-\rho}))^2 \rangle.
\end{align*}
We have
\begin{align*}
& \int_0^t \langle (\nabla (\vert u_m\vert^{\frac{r}{2}} \sqrt{1-\rho}))^2 \rangle \\
& = \Upsilon + \int_0^t \langle \vert u_m \vert^r (\nabla \sqrt{1-\rho})^2 \rangle + \frac{1}{2} \int_0^t \langle \vert u_m \vert^r, \vert \Delta \rho \vert \rangle \\ 
& = \Upsilon + \int_0^t \left\langle \vert u_m\vert^r, \frac{ \vert \nabla \rho\vert^2}{4(1-\rho)}\right\rangle +\frac{1}{2} \int_0^t \langle \vert u_m \vert^r, \vert \Delta \rho \vert \rangle.
\end{align*}
Thus, we obtain 
\begin{align*}
\sup_{s \in [0,t]}\langle (1-\rho)  u_m^{r}(s)\rangle & + \left(\frac{4(r-1)}{r} - \epsilon - \frac{1}{\epsilon}  \frac{\kappa}{2} \frac{N-1}{N}  \right)\Upsilon \\
&\leq \langle (1-\rho)u_0^{r}\rangle + \frac{1}{\epsilon}  \frac{\kappa}{2} \frac{N-1}{N} \int_0^t \left\langle \vert u_m\vert^r, \frac{ \vert \nabla \rho\vert^2}{4(1-\rho)}\right\rangle \\
& + \left(1+\frac{1}{2\epsilon}  \frac{\kappa}{2} \frac{N-1}{N} \right) \int_0^t \left\langle \vert u_m \vert^r,\vert \Delta \rho \vert   \right\rangle.
\end{align*}
Now, fix $\epsilon>0$ to have $\frac{4(r-1)}{r} - \epsilon - \frac{1}{\epsilon} \frac{\kappa}{2} \frac{N-1}{N} >0$. Then, we have 
$$
 \left\{
    \begin{array}{ll}
      \frac{ \vert \nabla \rho\vert^2}{4(1-\rho)} \leq    \gamma \sigma^2 \rho, \\ \ \\
   \vert \Delta \rho  \vert  \leq 2 \gamma\sigma^2(2(\gamma+1)- dN)\rho,   \\ \ \\
      \int_0^t \langle \rho u_{m}^{r} \rangle   \leqslant t \|u_0\|_r^r.  
    \end{array}
\right.
$$
Hence, we get 
\begin{equation*}
\sup_{s \in [0,t]}\langle (1-\rho) u_m^{r}(s)\rangle  
\leqslant \langle (1-\rho)u_0^{r}\rangle + \sigma^2 \left( \frac{\gamma }{\epsilon}  \frac{\kappa}{2} \frac{N-1}{N}  +\left(1+ \frac{1}{2\epsilon}  \frac{\kappa}{2} \frac{N-1}{N}  \right)2\gamma\left(2(\gamma+1)-dN \right) \right)\|u_0\|_r^r.
\end{equation*}
Since $1-\rho \rightarrow 0$ uniformly on the support of $u_0$ as $\sigma \rightarrow 0$, the right-hand side of the inequality can be made
arbitrarily small by taking  sufficiently small $\sigma$. That is, we have \eqref{tail_est}.

\medskip

Combining (a)-(c), we obtain that there exists
the limit
$$
 T^tf:=s\mbox{-}L^r\mbox{-}\lim_{\varepsilon \downarrow 0}e^{-t L_\varepsilon}f \quad (\text{loc.\,uniformly in $t \geq 0$}).
 $$ 
The uniform convergence in  $t$ ensures the strong continuity of $T^t$ in $L^r$. We extend $T^t$ to all $t>0$ by postulating the semigroup property. Step 1 yields
$$
\|T^t u_0\|_r \leq \|u_0\|_r, \quad t>0, \quad u_0 \in C_c^\infty.
$$
Therefore, we can extend $T^t$ from $C_c^\infty$ to all $L^r$. The latter determines a strongly continuous semigroup, $e^{-tL}:=T^t$. 
\end{proof}

\bigskip

\section{Two-particle case}

\label{two_app}

In this appendix we specify Theorems \ref{thm1} and \ref{thm2} to the heat kernel of the operators 
$$
\Lambda:=-\Delta - \frac{\nabla \varphi}{\varphi} \cdot \nabla \text{ on } \mathbb R^2, \quad \varphi(x)=|x|^{-\nu}+1
$$
and
$$
L:=-\Delta + \nu \frac{x}{|x|^2} \cdot \nabla \text{ on } \mathbb R^d, \quad d \geq 3.
$$
The upper bounds on their heat kernels allow us to estimate from above the density of the law of the relative displacement $Z_t:=X_t^1-X_t^2$ of the two-particle system, i.e., say, in dimension two,
$$
\left\{
\begin{array}{l}
dX_t^1=\frac{1}{2}\frac{\nabla \varphi}{\varphi} (X_t^1-X_t^2)dt + \frac{1}{\sqrt{2}}dB^1_t, \\
dX_t^2=\frac{1}{2}\frac{\nabla \varphi}{\varphi} (X_t^2-X_t^1)dt + \frac{1}{\sqrt{2}}dB^2_t, 
\end{array}
\right.
$$
where $\{B_t^1\}$, $\{B_t^2\}$ are independent two-dimensional Brownian motions; this can be easily seen by subtracting the corresponding SDEs and using the fact that $\varphi$ is an even function. Similarly in dimensions $d \geq 3$. 

We define the $\varepsilon$-regularized drifts and operators $\Lambda_\varepsilon$, $L_\varepsilon$ in the same way as we did in Theorems \ref{thm1} and \ref{thm2}, and denote by $p_\varepsilon$, $k_\varepsilon$ the corresponding heat kernels $:=$\,integral kernels of semigroups $e^{-t\Lambda_\varepsilon}$, $e^{-tL_\varepsilon}$. Define in dimension $d=2$
$$
\varphi_\varepsilon(y):=|y|_\varepsilon^{-\nu}+\varepsilon, \quad |y|_\varepsilon:=\sqrt{|y|^2+\varepsilon}.
$$
In dimensions $d \geq 4$, we define $\hat{\varphi} \in W^{2,\infty} (\mathbb R^d \setminus \{0\})$ via
$$
\hat{\varphi}(y)=\left\{ 
\begin{array}{ll} 
|y|_\varepsilon^{-\nu}, & |y| \leq \sqrt{t}, \\
\frac{1}{2}, & |y| \geq 2\sqrt{t}
\end{array}
\right.
$$
and set $\hat{\varphi}_t(y):=\hat{\varphi}(y/\sqrt{t})$. In dimension $d=3$ replace $|y|_\varepsilon$ with $|y|_{[\varepsilon]}:=|y|+\varepsilon$.

\begin{theorem} The following are true:

\begin{enumerate}

\item [\rm ($d=2$)]
Assume that the coupling constant $\nu$ satisfies
\begin{equation*}
\nu<\max_{1 \leq \alpha<2} \biggl[2^\alpha \frac{\Gamma(\frac{1}{2}+\frac{\alpha}{4})^2}{\Gamma(\frac{1}{2}-\frac{\alpha}{4})^2} \biggr]^{\frac{1}{\alpha}}=2\frac{\Gamma(\frac{1}{2}+\frac{1}{4})^2}{\Gamma(\frac{1}{2}-\frac{1}{4})^2}. 
\end{equation*}
Then, for every $\varepsilon>0$, 
\begin{equation*}
p_\varepsilon(t,x,y)\;\leq\; C t^{-1}\varphi_{\varepsilon}(y)
\end{equation*}
 for all $t \in ]0,T]$, $x, y \in \mathbb R^{2N}$, for a constant $C=C(T)$  independent of $\varepsilon$. Furthermore, there exists the limit
$$
e^{-t\Lambda}:=s\mbox{-}L^2\mbox{-}\lim_{\varepsilon \downarrow 0}e^{-t\Lambda_\varepsilon} \quad (\text{loc.\,uniformly in $t \geq 0$}),
$$
This is a semigroup of integral operators,
$$
e^{-t\Lambda}f(x)=:\int_{\mathbb R^{2}}p(t,x,y)f(y)dy,
$$
whose integral kernel $p(t,x,y)$
satisfies, for every $t \in ]0,T]$,
\begin{equation*}
p(t,x,y)\;\leq\; C t^{-1}\varphi(y), 
\end{equation*}
a.e.\,on $\mathbb R^{2} \times \mathbb R^{2}$.

\medskip

\item [\rm ($d \geq 3$)] Assume that  $$\nu<d-2.$$
Then, for every $\varepsilon>0$,
\begin{equation*}
k_\varepsilon(t,x,y)\;\leq\; c_1 \Gamma_{c_2}(t,x-y) \hat{\varphi}_{t,\varepsilon}(y)
\end{equation*}
on $]0,T] \times \mathbb R^{d} \times \mathbb R^{d}$,
where constants $c_1$, $c_2$ are independent of $\varepsilon$. Further, for every $r \geq 2$ satisfying $r>\frac{1}{1-\frac{\nu}{d-2}}$
there exists the limit
$$
e^{-tL}:= s\mbox{-}L^r\mbox{-}\lim_{\varepsilon \downarrow 0}e^{-t L_\varepsilon} \quad (\text{loc.\,uniformly in $t \geq 0$}),
 $$
a semigroup of integral operators,
$$
e^{-tL}f(x)=:\int_{\mathbb R^{d}}k(t,x,y)f(y)dy,
$$
whose integral kernel satisfies, for every $t \in ]0,T]$,
\begin{equation*}
k(t,x,y)\;\leq\; c_1\Gamma_{c_2 t}(x-y) \hat{\varphi}_t(y),
\end{equation*}
a.e.\,on $\mathbb R^{d} \times \mathbb R^{d}$.
\end{enumerate}

\end{theorem}

\begin{proof}
We repeat the proofs of Theorems \ref{thm1}, \ref{thm2} and Proposition \ref{prop_semigroup} using the ordinary (fractional where needed) Hardy inequality instead of the many-particle (fractional) Hardy inequality. In particular, in dimension $d=2$, for every $1 \leq \alpha < 2$, $b_\varepsilon = \frac{\nabla \varphi_\varepsilon}{\varphi_\varepsilon}$ is $\alpha$-form-bounded with form-bound independent of $\varepsilon$. 

In detail,
since $ \nabla\varphi_\varepsilon(x)  =  -\nu |x|_\varepsilon^{-\nu-2}x$,  we have $  b_\varepsilon(x) = -\nu   \frac{x}{|x|_\varepsilon^2(1+\varepsilon |x|_\varepsilon^\nu)} $. Therefore, $|b_\varepsilon(x)|^\alpha  \leq  \nu^\alpha |x|^{-\alpha}$, and so
by the fractional Hardy inequality in $\mathbb R^2$, i.e.\,\eqref{frac2},
$$
    \langle |b_\varepsilon|^\alpha g,g\rangle    \leq   \nu^\alpha  \frac{1}{2^\alpha}  \frac{ \Gamma\left(\frac12-\frac{\alpha}{4}\right)^2}{\Gamma\left(\frac12+\frac{\alpha}{4}\right)^2} \|(-\Delta)^{\alpha/4}g\|_2^2, \quad g \in \mathcal W^{\frac{\alpha}{2},2}(\mathbb R^d).
$$
To carry out the argument in the proof of Theorem \ref{thm1}, we need $ \nu^\alpha  \frac{1}{2^\alpha}  \frac{ \Gamma\left(\frac12-\frac{\alpha}{4}\right)^2}{\Gamma\left(\frac12+\frac{\alpha}{4}\right)^2}< 1$, hence the
condition on $\nu$. 
\end{proof}

Regarding the case $d \geq 3$, see also comments after Theorems \ref{thm1}, \ref{thm2}.

\bigskip

\bigskip

\end{document}